\renewcommand{\baselinestretch}{1.15}
\renewcommand{\thefootnote}{\fnsymbol{footnote}}	
\newcommand\DateFootnote{
\begingroup
\renewcommand\thefootnote{}
\footnote{\today}
\setcounter{footnote}{0}
\vspace*{-3ex}
\endgroup}
\renewcommand\section{\@startsection {section}{1}{\z@}%
                                   {-3ex \@plus -1ex \@minus -.2ex}%
                                   {2ex \@plus.2ex}%
                                   {\normalfont\large\bfseries}}
\renewcommand\subsection{\@startsection{subsection}{2}{\z@}%
                                     {-2.5ex\@plus -1ex \@minus -.2ex}%
                                     {1.5ex \@plus .2ex}%
                                     {\normalfont\normalsize\bfseries}}
\renewcommand\subsubsection{\@startsection{subsubsection}{3}{\z@}%
                                     {-2ex\@plus -1ex \@minus -.2ex}%
                                     {1ex \@plus .2ex}%
                                     {\normalfont\normalsize\bfseries}}
 \renewcommand\paragraph{\@startsection{paragraph}{4}{\z@}%
                                    {1.5ex \@plus.5ex \@minus.2ex}%
                                    {-1em}%
                                    {\normalfont\normalsize\bfseries}}
\renewcommand\subparagraph{\@startsection{subparagraph}{5}{\parindent}%
                                       {1.5ex \@plus.5ex \@minus .2ex}%
                                       {-1em}%
                                      {\normalfont\normalsize\bfseries}}
\renewcommand{\thefootnote}{\fnsymbol{footnote}}	
\newcommand{\arXiv}[1]{arXiv:\,\href{http://arxiv.org/abs/#1}{#1}}
\newcommand{\ceil}[1]{\lceil{#1}\rceil}
\newcommand{\floor}[1]{\lfloor{#1}\rfloor}
\newcommand{\half}{\ensuremath{\protect\tfrac{1}{2}}}
\renewcommand{\geq}{\geqslant}
\renewcommand{\leq}{\leqslant}
\DeclareMathOperator{\dist}{dist}
\DeclareMathOperator{\tw}{tw}
\DeclareMathOperator{\pw}{pw}
\theoremstyle{plain}
\newtheorem{theorem}{Theorem}
\newtheorem{lemma}[theorem]{Lemma}
\newtheorem{corollary}[theorem]{Corollary}
\theoremstyle{definition}
\begin{document}
{\Large\bfseries\boldmath\scshape Structure of Graphs with Locally Restricted Crossings\footnotemark[0]}


\DateFootnote

Vida Dujmovi{\'c}\,\footnotemark[2]
\quad 
David Eppstein\,\footnotemark[3] 
\quad  
David~R.~Wood\,\footnotemark[4]

\footnotetext[0]{A preliminary version of this paper entitled ``Genus, treewidth, and local crossing number'' was published in \emph{Proc. 23rd International Symposium on Graph Drawing and Network Visualization 2015}, Lecture Notes in Computer Science 9411:87--98, Springer, 2015.}

\footnotetext[2]{School of Computer Science and Electrical Engineering, University of Ottawa, 
Ottawa, Canada (\texttt{vida.dujmovic@uottawa.ca}). Supported by NSERC and the Ministry of Research and Innovation, Government of Ontario, Canada.}

\footnotetext[3]{Department of Computer Science, University of California, Irvine, California, USA (\texttt{eppstein@uci.edu}). Supported in part by NSF grant  CCF-1228639.}

\footnotetext[4]{School of Mathematical Sciences, Monash University, Melbourne, Australia (\texttt{david.wood@monash.edu}). Supported by the Australian Research Council.}

\emph{Abstract.} We consider relations between the size, treewidth, and local crossing number (maximum number of crossings per edge) of graphs embedded on topological surfaces. We show that an $n$-vertex graph embedded on a surface of genus $g$ with at most $k$ crossings per edge has treewidth $O(\sqrt{(g+1)(k+1)n})$ and layered treewidth $O((g+1)k)$, and that these bounds are tight up to a constant factor.  As a special case, the $k$-planar graphs with $n$ vertices have treewidth $O(\sqrt{(k+1)n})$ and layered treewidth $O(k+1)$, which are tight bounds that improve a previously known $O((k+1)^{3/4}n^{1/2})$ treewidth bound. Analogous results are proved for  map graphs defined with respect to any surface. Finally, we show that for $g<m$, every $m$-edge graph can be embedded on a surface of genus~$g$ with $O((m/(g+1))\log^2 g)$ crossings per edge, which is tight to a polylogarithmic factor.

\emph{Keywords.} treewidth, pathwidth, layered treewidth, local treewidth, 1-planar, $k$-planar, map graph, graph minor, local crossing number, separator, 

\section{Introduction}
\label{Intro}

\renewcommand{\thefootnote}{\arabic{footnote}}

This paper studies the structure of graph classes defined by drawings on surfaces in which the crossings are locally restricted in some way. 

The first such example that we consider are the $k$-planar graphs. A graph is \emph{$k$-planar} if it can be drawn in the plane with at most $k$ crossings on each edge \citep{PachToth-Comb97}. The \emph{local crossing number} of the graph is the minimum $k$ for which it is $k$-planar~\cite[pages 51--53]{Schaefer14}. An important example is the  $p\times q\times r$ grid graph, with vertex set $[p]\times[q]\times[r]$ and all edges of the form $(x,y,z)(x+1,y,z)$ or $(x,y,z)(x,y+1,z)$ or $(x,y,z)(x,y,z+1)$. A suitable linear projection from the natural three-dimensional embedding of this graph to the plane gives a $(r-1)$-planar drawing, as  illustrated in \cref{GridGraph}.

\begin{figure}[t]
\centering
\includegraphics[width=0.7\textwidth]{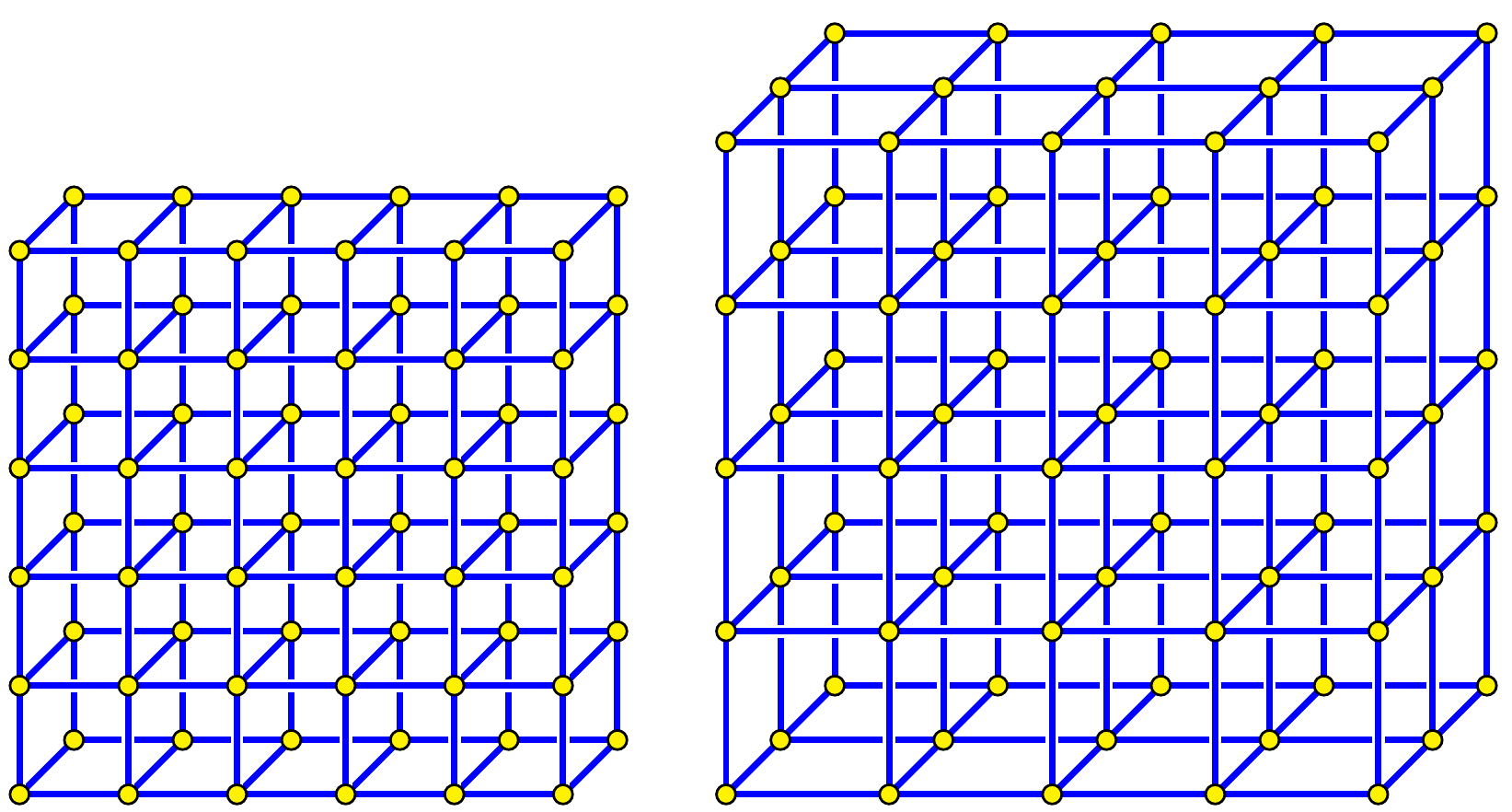}
\caption{The $p\times q\times r$ grid graph is $(r-1)$-planar. }
\label{GridGraph}
\end{figure}

The main way that we describe the structure of a graph is through its treewidth, which is a parameter that measures how similar a graph is to a tree. It is a key measure of the complexity of a graph and is of fundamental importance in algorithmic graph theory and structural graph theory, especially in Robertson and Seymour's graph minors project. See \cref{Background} for a detailed definition of treewidth. 

Treewidth is closely related to the size of a smallest \emph{separator}, a set of vertices whose removal splits the graph into connected components each with at most half the vertices. Graphs of low treewidth necessarily have small separators, and graphs in which every subgraph has a small separator have low treewidth \citep{DN14,Reed97}. For example, the Lipton-Tarjan separator theorem, which says that every $n$-vertex planar graph has a separator of order $O(\sqrt{n})$, can be reformulated as every $n$-vertex planar graph has treewidth $O(\sqrt{n})$. Most of our results provide $O(\sqrt{n})$ bounds on the treewidth of particular classes of graphs that generalise planarity. In this sense, our results are generalisations of the Lipton-Tarjan separator theorem, and analogous results for other surfaces. 

The starting point for our work is the following question: what is the maximum treewidth of $k$-planar graphs on $n$ vertices? \citet{GB07} studied this question and proved an upper bound of $O(k^{3/4}n^{1/2})$. We improve this and give the following tight bound:

\begin{theorem}
\label{kPlanarTreewidth}
The maximum treewidth of $k$-planar $n$-vertex graphs is
\[ \Theta\left(\min\left\{n,\sqrt{(k+1)n}\right\}\right). \]
\end{theorem}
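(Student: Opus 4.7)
The argument splits into an upper bound on the treewidth of every $k$-planar $n$-vertex graph and a matching lower bound.

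For the upper bound, I would route the proof through layered treewidth. Given a $k$-planar drawing of $G$, form the planarization $G'$ by replacing each crossing with a new degree-$4$ dummy vertex, so that $G'$ is planar and each edge of $G$ corresponds to a path of at most $k+1$ edges of $G'$. It is a standard fact that planar graphs have layered treewidth at most $3$: there is a layering $(V_0',V_1',\dots)$ of $G'$, which one may take to be a BFS layering from an arbitrary root, together with a tree decomposition of $G'$ in which every bag meets each $V_i'$ in at most $3$ vertices. To translate this back to $G$, group consecutive BFS layers of $G'$ into blocks of size $k+1$, that is set $U_j := V_{j(k+1)}'\cup\dots\cup V_{(j+1)(k+1)-1}'$, and declare the layer of $v\in V(G)$ to be the unique index $j$ with $v\in U_j$. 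Because an edge $uv\in E(G)$ becomes a path of length at most $k+1$ in $G'$, the BFS levels of $u$ and $v$ differ by at most $k+1$, so $u$ and $v$ lie in the same or consecutive blocks; this is a valid layering of $G$. Restricting the tree decomposition of $G'$ to $V(G)$ then gives a tree decomposition of $G$ whose bags meet each block in at most $3(k+1)=O(k+1)$ vertices, so $G$ has layered treewidth $O(k+1)$. I would then invoke the standard inequality $\tw(H)=O(\sqrt{\ell\,n})$ for graphs on $n$ vertices of layered treewidth $\ell$ (a short BFS-plus-separator argument based on slicing into strips) to conclude $\tw(G)=O(\sqrt{(k+1)n})$; the trivial bound $\tw(G)\le n-1$ covers the other branch of the minimum.

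For the matching lower bound I would use the $p\times q\times r$ grid graph from \cref{GridGraph}. With $p\le q\le r$ this grid has treewidth $\Theta(pq)$ and, by the projection indicated in the figure, admits a $(r-1)$-planar drawing. Setting $r=k+1$ and $p=q=\sqrt{n/(k+1)}$ produces an $n$-vertex $k$-planar graph with treewidth $\Theta((k+1)\sqrt{n/(k+1)})=\Theta(\sqrt{(k+1)n})$ whenever $n\ge (k+1)^3$, so that $p\le r$. For $n\le k+1$, the complete graph $K_n$ (which is trivially $k$-planar since $k\ge n-1$) has treewidth $n-1$ and matches the branch $n$ of the minimum. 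Intermediate values of $(n,k)$ can be covered by combining disjoint copies of the preceding constructions or by using a cubic subgrid of appropriate side length.

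The main obstacle is the $O(k+1)$ bound on the layered treewidth of $k$-planar graphs. A BFS layering of the planarization $G'$ is not a layering of $G$ itself, because a single edge of $G$ may span up to $k+1$ consecutive BFS layers of $G'$. The crucial move is to \emph{coarsen} the layering by amalgamating $k+1$ consecutive BFS layers of $G'$ into one block of $G$: this restores the ``edges go between same or consecutive layers'' property of a layering, at the cost of inflating the per-layer intersection of each bag by exactly the factor $k+1$. Once the layered treewidth bound is in place, the $O(\sqrt{(k+1)n})$ treewidth bound follows from the general translation, and the lower bound is a direct treewidth computation on the three-dimensional grid.
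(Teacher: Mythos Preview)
Your upper bound follows the paper's route through layered treewidth, but the step ``restricting the tree decomposition of $G'$ to $V(G)$ gives a tree decomposition of $G$'' fails as stated. If an edge $vw\in E(G)$ passes through dummy crossing vertices $x_1,\dots,x_j$ in $G'$, the decomposition of $G'$ need only cover the edges $vx_1,\, x_1x_2,\dots,\, x_jw$; after deleting the dummies from each bag there may be no bag containing both $v$ and $w$. The paper repairs this by orienting the edges of $G$ and replacing each dummy $x$ in a bag by the \emph{tails} of the two $G$-edges crossing at $x$; one then checks that for each $v\in V(G)$ the bags now containing $v$ are exactly those that met the connected subgraph of $G'$ induced by $\{v\}\cup\{\text{division vertices on edges with tail }v\}$, hence form a subtree, and that every edge of $G$ lands in a common bag. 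This replacement at most doubles each bag--layer count, which is why the paper obtains layered treewidth $6(k+1)$ rather than your $3(k+1)$.

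Your lower-bound construction differs from the paper's and does not cover the full range. The grid argument is salvageable for $n\ge (k+1)^3$ once you fix the ordering: you need $r=k+1$ to be the \emph{smallest} side so that the treewidth is $\Theta(rp)=\Theta(\sqrt{(k+1)n})$; your ``$p\le q\le r$'' and ``$n\ge (k+1)^3$ so that $p\le r$'' are both inverted. More seriously, the claim that $K_n$ is $k$-planar whenever $k\ge n-1$ is false: the local crossing number of $K_n$ is $\Theta(n^2)$, so $K_n$ is $k$-planar only for $n=O(\sqrt{k})$. This leaves the whole regime $\sqrt{k}\ll n\ll (k+1)^3$ unproved, and neither disjoint copies (which do not raise treewidth) nor cubic subgrids (which give treewidth only $n^{2/3}<\sqrt{(k+1)n}$ in this range) close the gap. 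The paper's lower bound instead takes a cubic expander $H$ on $N$ vertices with $\tw(H)\ge \varepsilon N$, draws it arbitrarily (fewer than $\tfrac32 N$ crossings per edge), and subdivides each edge at most $3N/(2k)$ times; the result is $k$-planar with $n'\le 4N^2/k$ vertices and unchanged treewidth $\ge \varepsilon N\ge \tfrac{\varepsilon}{2}\sqrt{kn'}$, covering all $1\le k\le \tfrac32 n$ uniformly.
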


More generally, a graph is \emph{$(g,k)$-planar} if it can be drawn in a surface of Euler genus at most $g$ with at most $k$ crossings on each edge\footnote{The \emph{Euler genus} of an orientable surface with $h$ handles is $2h$. The \emph{Euler genus} of  a non-orientable surface with $c$ cross-caps is $c$. The \emph{Euler genus} of a graph $G$ is the minimum Euler genus of a surface in which $G$ embeds (with no crossings).}. For instance, \citet{GJS68} investigated the local crossing number of toroidal embeddings---in this notation, the $(2,k)$-planar graphs. We again determine an optimal bound on the treewidth of such graphs. 

\begin{theorem}
\label{gkPlanarTreewidth}
The maximum treewidth of $(g,k)$-planar $n$-vertex graphs is
\[ \Theta\left(\min\left\{n,\sqrt{(g+1)(k+1)n}\right\}\right). \]
\end{theorem}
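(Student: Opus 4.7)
\textbf{Proof plan for \cref{gkPlanarTreewidth}.} The approach generalises the strategy behind \cref{kPlanarTreewidth} by routing everything through \emph{layered treewidth}: a graph $G$ has layered treewidth at most $\ell$ if there is a partition $(V_1, V_2, \dots)$ of $V(G)$, with every edge inside one class or between consecutive classes, and a tree decomposition of $G$ in which every bag meets every class in at most $\ell$ vertices. Two prior results serve as inputs: (a)~every graph of Euler genus $g$ has layered treewidth $O(g+1)$; (b)~every $n$-vertex graph of layered treewidth $\ell$ has treewidth $O(\sqrt{\ell n})$.

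\textbf{Upper bound.} Given a $(g,k)$-planar drawing of $G$, form the planarisation $G'$ by inserting a degree-$4$ dummy vertex at each crossing. Then $G'$ embeds without crossings in a surface of Euler genus at most $g$, and each edge of $G$ corresponds to a path of length at most $k+1$ in $G'$. Applying (a) to $G'$ produces a layering $(L_0, L_1, \dots)$ and a witnessing tree decomposition of $G'$. Coarsen by grouping consecutive blocks of $k+2$ layers: set $V_i := V(G) \cap (L_{(i-1)(k+2)+1} \cup \dots \cup L_{i(k+2)})$. Because the endpoints of any edge of $G$ lie in $G'$-layers whose indices differ by at most $k+1$, they fall in the same or adjacent $V_i$, so $(V_1, V_2, \dots)$ is a layering of $G$. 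Restricting the tree decomposition of $G'$ to $V(G)$, each bag meets each $V_i$ in at most $(k+2)\cdot O(g+1) = O((g+1)(k+1))$ vertices, so $G$ has layered treewidth $O((g+1)(k+1))$. Invoking (b) gives $\tw(G) = O(\sqrt{(g+1)(k+1)\,n})$; combined with the trivial $\tw(G) \leq n-1$ this yields the upper side of the $\min$.

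\textbf{Lower bound.} We must exhibit, for each admissible triple $(g,k,n)$, a $(g,k)$-planar $n$-vertex graph of treewidth $\Omega(\sqrt{(g+1)(k+1)n})$. The case $g=0$ is handled by the $p \times p \times (k+1)$ three-dimensional grid projected as in \cref{GridGraph}, which has $p^2(k+1) = n$ vertices and treewidth $\Theta(p(k+1)) = \Theta(\sqrt{(k+1)n})$. For general $g$, I propose to take a known genus-$g$ ``grid-like'' graph $H$ on $\Theta(n/(k+1))$ vertices of treewidth $\Theta(\sqrt{(g+1)|V(H)|})$ and form the Cartesian product $H \mathbin{\square} P_{k+1}$, drawn on the surface by stacking $k+1$ parallel copies of $H$'s embedding so that each edge is crossed at most $k$ times. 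In the residual range $(g+1)(k+1) > n$, one can instead take $K_n$, which has treewidth $n-1$ and admits $(g,k)$-planar drawings for such generous parameters.

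\textbf{Main obstacle.} The upper bound is essentially mechanical once (a) and (b) are available. The real work lies in the lower bound: the treewidth of $H \mathbin{\square} P_{k+1}$ inherited immediately from $H$ alone is only $\Omega(\sqrt{(g+1)n/(k+1)})$, a factor of $k+1$ short of what the theorem demands. Closing this gap will require either locating a sufficiently large grid minor inside $H \mathbin{\square} P_{k+1}$ or running a dedicated bramble/separator argument tailored to the chosen high-treewidth genus-$g$ graph $H$.
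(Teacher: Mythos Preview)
Your upper-bound sketch is close to the paper's, but one step is wrong as written: merely \emph{restricting} the bags of the tree decomposition of $G'$ to $V(G)$ does not produce a tree decomposition of $G$. An edge $vw$ of $G$ with crossings becomes a path $v,x_1,\dots,x_j,w$ in $G'$, and nothing forces $v$ and $w$ into a common bag after you throw away the $x_i$. The paper repairs this by first orienting every edge of $G$ and then, in each bag, replacing every dummy crossing-vertex $x$ by the two \emph{tails} of the edges that cross at $x$. This at most doubles the size of each bag/layer intersection and restores both tree-decomposition axioms (the subtree for a vertex $v$ becomes the union of the old subtrees for $v$ and for all dummies on edges with tail $v$, which is connected). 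With that fix your argument matches the paper's, and blocks of $k+1$ layers suffice.

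For the lower bound you have correctly identified the obstacle, but the paper's construction is not $H\mathbin{\square}P_{k+1}$ for a high-treewidth genus-$g$ graph $H$; that route, as you note, loses a factor of $k+1$. Instead the paper takes a bounded-degree \emph{expander} $H$ on $m=\Theta(g)$ vertices (hence embeddable in Euler genus $g$), replaces each vertex of $H$ by a copy of the $q\times q\times(k+1)$ grid, and replaces each edge of $H$ by a matching so that adjacent copies glue into a $2q\times q\times(k+1)$ grid. The result is $(g,k)$-planar with $n=\Theta(gq^2(k+1))$ vertices. The treewidth lower bound is then obtained by a lifting argument: given a $\tfrac12$-separator $S$ of the big graph, use a quantitative separator lower bound for the $p\times q\times r$ grid to show that any vertex-copy not hit by $\Omega(q(k+1))$ elements of $S$ has a dominant component, and that dominant components in adjacent copies must coincide; this turns $S$ into a balanced separator of the expander $H$, which must have size $\Omega(m)$, whence $|S|=\Omega(m\cdot q(k+1))=\Omega(\sqrt{g(k+1)n})$. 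The expander contributes the factor $g$ and the 3D grid at each vertex contributes the factor $k+1$; neither a single genus-$g$ grid nor a bare path factor can supply both. Finally, your appeal to $K_n$ in the range $(g+1)(k+1)>n$ is not justified (for $g=0$, Pach--T\'oth edge bounds force the local crossing number of $K_n$ to be $\Omega(n^2)$, not $O(n)$); that range is instead handled by applying the main construction with reduced parameters $g'\leq g$, $k'\leq k$ chosen so that $(g'+1)(k'+1)=\Theta(n)$.
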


In both these theorems, the $k=0$ case (with no crossings) is well known~\citep{GHT-JAlg84}. 

Our second contribution is to study the $(g,k)$-planarity of graphs as a function of their number of edges. For (global) crossing number, it is known that a graph with $n$ vertices and $m$ edges  drawn on a surface of genus~$g$ (sufficiently small with respect to $m$) may require $\Omega(\min\{m^2/g,m^2/n\})$ crossings, and it can be drawn with $O((m^2\log^2 g)/g)$ crossings~\citep{SSSV96}. In particular, the lower bound implies that some graphs require $\Omega(m/g)$ crossings per edge on average, and therefore also in the worst case.
We prove a nearly-matching upper bound which implies the above-mentioned upper bound on the total number of crossings:

\begin{theorem}
\label{thm:few-crossings}
For every graph $G$ with $m$ edges, for every integer $g\geq 1$, there is a drawing of $G$ in the orientable surface with at most $g$ handles and with \[O\left(\frac{m\log^2 g}{g}\right)\]
crossings per edge.
\end{theorem}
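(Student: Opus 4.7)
The strategy is to adapt the construction of \citet{SSSV96}---which yields a drawing on the genus-$g$ surface with $O((m^2/g)\log^2 g)$ crossings in total---into a \emph{per-edge balanced} drawing in which each edge is crossed only the average number of times. Conceptually, one wants to route the edges of $G$ through $g$ ``parallel channels'' on the surface (one per handle) so that each channel carries roughly $m/g$ edges, and each edge traverses only a short sequence of channels.

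The plan is to proceed by induction on $g$, proving the stronger statement: there is an absolute constant $c$ such that every graph $H$ with $m'$ edges has a drawing on the orientable surface with $g'\geq 1$ handles and at most $c\,(m'/g')\log^2(g'+1)$ crossings on each edge. When $m'\leq g'$ the graph $H$ has Euler genus at most $g'$ and embeds without crossings, so assume $m'>g'$. For the inductive step, I would select a balanced vertex bipartition $V(H)=A\cup B$ with $|E(H[A])|,|E(H[B])|\leq \tfrac{2}{3}m'$ and with a cut $E(A,B)$ of controlled size, recursively embed $H[A]$ on a sub-surface of genus $g_A$ and $H[B]$ on a sub-surface of genus $g_B$ with $g_A+g_B+h=g'$, and route the cut edges through $h$ ``bridging'' handles placed between the two sub-surfaces.

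The crossings on a fixed edge $e$ then split into two parts: \emph{recursive} crossings, arising inside the subproblem containing $e$ and controlled by the inductive hypothesis, and \emph{routing} crossings, incurred only when $e$ itself is a cut edge traversing the bridging handles. Because the recursion halves the problem at each step, $e$ is a cut edge at $O(\log g)$ levels; showing that each such traversal adds at most $O((m/g)\log g)$ crossings to $e$ then yields the claimed $O((m/g)\log^2 g)$ bound by summing over levels.

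The principal obstacle is the per-edge analysis of the routing step: organising the cut edges through the bridging handles so that each bridging handle is shared by $O(m/g)$ cut edges and each cut edge crosses only $O(\log g)$ handles along its route. This demands careful control of the cyclic ordering of cut-edge endpoints on each bridging circle---essentially a butterfly-like routing pattern threaded through the recursion---and it is here that the existing analysis of \citet{SSSV96} becomes delicate. The two logarithmic factors in the final bound are expected to arise one from the $O(\log g)$ depth of the recursion and one from the $O(\log g)$ length of each routing path, equivalently from the approximation factor of a sparsest-cut routine used to produce the balanced bipartitions at each level.
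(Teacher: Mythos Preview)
Your approach is genuinely different from the paper's, and the gap you flag yourself as ``the principal obstacle'' is indeed unresolved. The paper does \emph{not} proceed by recursive bipartition in the style of \citet{SSSV96}. Instead it takes a one-shot route: fix a bounded-degree expander $Q$ on $q=\Theta(g)$ vertices, attach the vertices of $G$ to $Q$ via a load-balancing bipartite graph (so that each expander vertex receives total attached degree $\Theta(m/q)$ regardless of how irregular $G$ is), and then route every edge of $G$ through $Q$ using the Leighton--Rao theorem. This yields a host graph $H$ of cyclomatic number $O(q)\le g$; replacing each vertex of $H$ by a punctured sphere and each edge by a cylinder gives a genus-$g$ surface, and an edge of $G$ becomes a curve through $O(\log q)$ expander-spheres, each carrying $O((m/q)\log q)$ curves. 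The two logarithms thus come from the path length and the congestion in a single Leighton--Rao routing, not from recursion depth.

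In your sketch, by contrast, the per-edge accounting is not pinned down. First, an edge becomes a cut edge at exactly one level of the vertex recursion, not at $O(\log g)$ levels; what you presumably need is that a cut edge, once cut, must thread its way from its $A$-endpoint (buried inside the recursive $A$-embedding) out to the bridging handles and back down to its $B$-endpoint, crossing the bridging structure at every intermediate level---but this is not what you wrote, and making it precise requires specifying how the recursive sub-surfaces are glued and where the cut edges enter and leave them. Second, the ``butterfly-like routing'' through the $h$ bridging handles is exactly the hard part: you need congestion $O(m/g)$ per handle \emph{and} dilation $O(\log g)$ per edge simultaneously, with the entry and exit points of each cut edge dictated by the recursion rather than chosen freely. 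This is essentially an oblivious permutation-routing statement, and it is not obvious without something like Leighton--Rao or a Bene\v{s} network argument---at which point the recursion is doing no real work and you may as well route directly through a single expander, as the paper does. Third, your recursion is on $g$ but your halving is on $m'$, so the claimed $O(\log g)$ depth needs the genus to be apportioned so that each subproblem's genus also drops by a constant factor; you have not said how $g_A,g_B,h$ are chosen to guarantee this while keeping $m'/g'$ bounded.
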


Our third contribution concerns map graphs, which are defined as follows. Start with a graph $G_0$ embedded in a surface of Euler genus $g$, with each face labelled a `nation' or a `lake', where each vertex of $G_0$ is incident with at most $d$ nations. Let $G$ be the graph whose vertices are the nations of $G_0$, where two vertices are adjacent in $G$ if the corresponding faces in $G_0$ share a vertex. Then $G$ is called a \emph{$(g,d)$-map graph}. A $(0,d)$-map graph is called a (plane) \emph{$d$-map graph}; such graphs have been extensively studied \citep{FLS-SODA12,Chen-JGT07,DFHT05,CGP02,Chen01}. It is easily seen that $(g,3)$-map graphs are precisely the graphs of Euler genus at most $g$ (which is well known in the $g=0$ case \citep{CGP02})\footnote{Let $G$ be a  graph embedded in a surface of Euler genus at most $g$.  Let $M(G)$ be the \emph{medial} graph of $G$. This graph has vertex set $E(G)$ where two vertices of $M(G)$ are adjacent whenever the corresponding edges in $G$ are consecutive in the cyclic ordering of edges incident to a common vertex in the embedding of $G$. Note that $M(G)$ embeds in the same surface as $G$, where each face of $M(G)$ corresponds to a vertex or a face of $G$. Label the faces of $M(G)$ that correspond to vertices of $G$ as nations, and label the faces of $M(G)$ that correspond to faces of $G$ as lakes. The vertex of $M(G)$ corresponding to an edge $vw$ of $G$ is incident to the nations corresponding to $v$ and $w$ (and is incident to no other nations). Thus $G$ is isomorphic to the map graph of $M(G)$, and $G$ is a $(g,2)$-map graph and thus a $(g,3)$-map graph. Conversely, it is clear that a $(g,3)$-map graph embeds in the same surface as the original graph.}. So $(g,d)$-map graphs provide a natural generalisation of graphs embedded in a surface. Note that $G$ may contain arbitrarily large cliques even in the $g=0$ case, since if a vertex of $H$ is incident with $d$ nations then $G$ contains $K_d$. 

If $G$ is the map graph associated with an embedded graph $H$, then consider the natural drawing of $G$ in which each vertex  of $G$ is positioned inside the corresponding nation, and each edge of $G$ is drawn as a curve through the corresponding vertex of $H$. If a vertex $v$ of $H$ is incident to $d$ nations, then each edge passing through $v$ is crossed by at most $\floor{\frac{d-2}{2}}\ceil{\frac{d-2}{2}}$ edges. Thus every $(g,d)$-map graph is $(g,\floor{\frac{d-2}{2}}\ceil{\frac{d-2}{2}})$-planar, and \cref{gkPlanarTreewidth} implies that every $(g,d)$-map graph has treewidth  $O(d\sqrt{(g+1)n})$. We improve on this result as follows.

\begin{theorem} 
\label{MapGraphTreewidthSummary}
The maximum treewidth of $(g,d)$-map graphs on $n$ vertices is
$$ \Theta\left(\min\left\{n,\sqrt{(g+1)(d+1)n}\right\}\right). $$
\end{theorem}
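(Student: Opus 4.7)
The plan is to show that every $n$-vertex $(g,d)$-map graph has layered treewidth $O((g+1)(d+1))$, and then apply the standard reduction $\tw(G)=O(\sqrt{\ell n})$ for graphs of layered treewidth $\ell$ (the same tool underpinning \cref{kPlanarTreewidth} and \cref{gkPlanarTreewidth}). This upgrades the naive bound of $O(d\sqrt{(g+1)n})$ noted just before the theorem statement to the claimed $O(\sqrt{(g+1)(d+1)n})$, saving a factor of $\sqrt{d+1}$.

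Let $G$ be the $(g,d)$-map graph of an embedded graph $H_0$ with each vertex of $H_0$ incident to at most $d$ nations. First construct an auxiliary embedded graph $H$ by inserting, inside each nation $f$, a new vertex $x_f$ joined by curves (lying inside $f$) to every boundary vertex of $f$. These curves are internally disjoint inside $f$, so $H$ embeds in the same surface as $H_0$ and has Euler genus at most $g$. Consequently $H$ admits a BFS-layering $(L_i)$ and a tree decomposition in which each bag meets each layer in $O(g+1)$ vertices. Identify $V(G)$ with $\{x_f : f \text{ a nation}\}$, and expand each bag $B$ of $H$ to
\[
B^{\star} := (B \cap V(G)) \cup \{x_f : \exists v \in B \cap V(H_0),\ v \in \partial f \}.
\]
This is a valid tree decomposition of $G$: every edge $x_f x_{f'}$ of $G$ originates from a common boundary vertex $v$, which co-occurs with $x_f$ in some bag of $H$ (since $vx_f$ is an edge of $H$), and that bag then contains $x_{f'}$ through the incidence clause; the connected-subtree property for each $x_f$ follows because every $v \in \partial f$ is joined to $x_f$ in $H$, so the $H$-subtree of $v$ meets the $H$-subtree of $x_f$.

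To bound the layered width, use the layering $L_i \cap V(G)$ of $G$. For an edge $x_f x_{f'}$ of $G$, the shared boundary vertex $v$ is a common BFS-neighbour in $H$, so the layers of $x_f$ and $x_{f'}$ differ by at most $2$; merging consecutive pairs of layers yields a proper layering at the cost of a constant factor. Now $|B^{\star} \cap L_i|$ splits into at most $O(g+1)$ intrinsic $x_f$'s coming directly from $B$, plus inserted $x_f$'s: the vertices $v \in B \cap V(H_0)$ that can produce an $x_f$ in layer $L_i$ must lie in $L_{i-1} \cup L_i \cup L_{i+1}$, so there are $O(g+1)$ of them, and each contributes at most $d$ nations. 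Hence the layered treewidth of $G$ is $O((g+1)(d+1))$, and the layered-to-treewidth reduction yields $\tw(G) = O(\sqrt{(g+1)(d+1)n})$. The $\min\{n,\cdot\}$ form is automatic since $\tw(G) \le n-1$.

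For the matching lower bound, two regimes arise. When $d \gtrsim n$, the complete graph $K_n$ is realised as a $(0,d)$-map graph (one vertex of $H_0$ bordering $n$ nations), giving $\tw = n-1$. Otherwise, substitute each vertex of a roughly $\sqrt{n/d}\times\sqrt{n/d}$ planar grid (or a suitable surface grid for genus $g$, using the constructions behind \cref{gkPlanarTreewidth}) by $d$ nations meeting at a common vertex of the underlying embedded graph. The resulting $(g,d)$-map graph contains the $K_d$-blowup of a grid as a subgraph, and standard arguments (e.g. a bramble or the known treewidth of $K_d \boxtimes P_\ell \boxtimes P_\ell$) give $\tw = \Omega(d\sqrt{n/d}) = \Omega(\sqrt{(g+1)(d+1)n})$. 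The main obstacle will be the layered-width bookkeeping in the middle step: it is crucial that the $d$-fold blow-up is charged per layer (through the $O(g+1)$ vertices of $H$ in each layer–bag intersection) rather than per bag, since the latter would lose the square-root saving; everything else is routine.
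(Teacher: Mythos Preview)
Your upper-bound argument is essentially the paper's. The auxiliary graph $H$ you build is (ignoring the edges of $H_0$, which are irrelevant) exactly the bipartite ``half-square witness'' the paper extracts in its characterisation lemma for $(g,d)$-map graphs, and your bag expansion $B^{\star}$ is precisely the half-square tree decomposition the paper analyses. The layered-width bookkeeping---charging each of the $O(g+1)$ boundary vertices in a bag--layer slice with at most $d$ incident nations---is identical; the paper avoids your ``merge consecutive pairs of layers'' step only by rooting the BFS at a nation vertex so that $A$- and $B$-layers alternate, but this is cosmetic.

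The lower bound is where your sketch has a real gap. The paper does \emph{not} realise $K_d\boxtimes P_\ell\boxtimes P_\ell$ as a $(g,d)$-map graph, and your proposed construction does not either: if $d$ nations meet at a grid vertex $v$ and $d$ more meet at an adjacent grid vertex $w$, you do not get all $d^2$ cross-edges between the two cliques unless some vertex of the embedded graph is incident to roughly $2d$ nations, violating the degree bound. What the paper actually does is build a concrete plane graph $Y_{p,q,r}$ (an $(r{-}1)$-times subdivided grid with a degree-$4r$ hub in each cell), take its map graph $Z_{p,q,r}$, and prove a separator lower bound for $Z_{p,q,r}$ directly via a row/column argument tailored to that graph---no appeal to blowups or brambles of a product. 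For the genus factor, the paper then glues copies of this gadget along a $4$-regular expander $H$ embedded in the surface, and shows that any balanced separator of the resulting map graph projects to a balanced separator of $H$; this step needs its own quantitative argument and cannot simply be borrowed from the $(g,k)$-planar lower bound. So your lower-bound outline has the right flavour (grid-like gadget plus expander), but the specific subgraph you claim is not there, and both the gadget and the expander-projection step require genuine work that your sketch defers.
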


We prove our treewidth upper bounds by using the concept of \emph{layered treewidth}~\citep{DMW13}, which is of independent interest (see \cref{Background}). We prove matching lower bounds by finding $(g,k)$-planar graphs and $(g,d)$-map graphs without small separators and using the known relations between separator size and treewidth.

\section{Background and Discussion}
\label{Background}

For $\epsilon\in(0,1)$, a set $S$ of vertices in a graph $G$ is an \emph{$\epsilon$-separator} of $G$ if each component of $G-S$ has at most $\epsilon |V(G)|$ vertices. It is conventional to set $\epsilon=\frac{1}{2}$ or $\epsilon=\frac{2}{3}$ but the precise choice makes no difference to the asymptotic size of a separator.

Several results that follow depend on expanders; see \citep{HLW06} for a survey. The following folklore result provides a property of expanders that is the key to our applications. 

\begin{lemma}
\label{Expander}
For every $\epsilon\in(0,1)$ there exists $\beta>0$, such that for all
$k\geq 3$ and $n\geq k+1$ (such that $n$ is even if $k$ is odd), there exists a $k$-regular $n$-vertex graph $H$ (called an expander) in which every $\epsilon$-separator in $H$ has size at least $\beta n$.
\end{lemma}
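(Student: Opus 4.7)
The plan is a probabilistic existence argument for an expander followed by the standard reduction from vertex expansion to separator size. Call a graph $H$ an \emph{$\alpha$-expander} if every $A\subseteq V(H)$ with $|A|\leq n/2$ satisfies $|N_H(A)\setminus A|\geq \alpha|A|$. I will exhibit, for some $\alpha=\alpha(k)>0$ depending only on $k$, a $k$-regular $\alpha$-expander on $n$ vertices, and then deduce that any $\epsilon$-separator of such a graph has at least $\beta n$ vertices for some $\beta=\beta(\epsilon,\alpha)>0$.

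For the existence step I would sample a random $k$-regular graph via the configuration model: attach $k$ half-edges to each vertex and pair the resulting $kn$ half-edges uniformly at random (the parity hypothesis that $n$ is even when $k$ is odd ensures $kn$ is even), then condition on the resulting multigraph being simple, an event whose probability is bounded below by a positive constant depending only on $k$ for $k\geq 3$. To bound the failure probability, fix $a\leq n/2$ and estimate the chance that a given set $A$ of size $a$ has $|N_H(A)\setminus A|\leq \alpha a$: in that case all $ka$ half-edges at $A$ are matched within $A\cup T$ for some $T$ with $|T|\leq \alpha a$, which by a sequential-pairing calculation has probability at most $\bigl((1+\alpha)a/n\bigr)^{ka/2}$ up to polynomial factors. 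Union-bounding over the $\binom{n}{a}\binom{n}{\alpha a}$ choices of $(A,T)$ and using $\binom{n}{a}\leq (en/a)^a$ produces a total exponent proportional to $a\log(n/a)\cdot(1-k/2+\alpha)+O(\alpha a)$. For $k\geq 3$ and $\alpha=\alpha(k)$ chosen small enough, this exponent is negative uniformly in $a\in[1,n/2]$, so summing over $a$ keeps the expected number of non-expanding sets strictly below $1$ and a simple $k$-regular $\alpha$-expander exists.

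For the reduction step, let $S$ be an $\epsilon$-separator of such an $H$. Greedily partition the components of $H-S$ into two sets $A_1,A_2$ by always appending the next component to whichever side is currently smaller. Since every component has at most $\epsilon n$ vertices, $\bigl||A_1|-|A_2|\bigr|\leq \epsilon n$, so the smaller side $B$ satisfies $|B|\geq \tfrac{1}{2}\bigl((1-\epsilon)n-|S|\bigr)$ and $|B|\leq n/2$. There are no edges of $H$ between $A_1$ and $A_2$, hence $N_H(B)\setminus B\subseteq S$, and the $\alpha$-expansion applied to $B$ gives $|S|\geq \alpha|B|\geq \tfrac{\alpha}{2}\bigl((1-\epsilon)n-|S|\bigr)$. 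Rearranging yields $|S|\geq \tfrac{\alpha(1-\epsilon)}{2+\alpha}n$, so setting $\beta:=\tfrac{\alpha(1-\epsilon)}{2+\alpha}$ completes the proof.

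The main obstacle is the probabilistic expansion estimate: one must verify that the union-bound exponent is negative \emph{uniformly} across $a\in[1,n/2]$, not only at the easy endpoints where $a$ is small or $a\approx n/2$, and the entropy bookkeeping is delicate when $k=3$ and $\alpha$ is near zero. A cleaner alternative is to appeal to explicit expander constructions (see \citep{HLW06}), for instance LPS-Ramanujan graphs when $k-1$ is a prime power, and to adapt them to arbitrary $k\geq 3$ by standard degree-adjusting tricks such as adding a perfect matching; the reduction from expansion to separators in the last paragraph is then unchanged.
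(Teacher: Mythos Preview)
The paper does not prove this lemma at all: it is stated as a ``folklore result'' with a reference to the survey \citep{HLW06} and no argument. So your proposal goes strictly beyond what the paper supplies, and there is nothing to compare against.

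Your sketch is essentially sound. The reduction from vertex expansion to separator size is clean and correct: the greedy grouping of components gives a side $B$ with $|B|\leq n/2$ and $|B|\geq \tfrac{1}{2}((1-\epsilon)n-|S|)$, and $N_H(B)\setminus B\subseteq S$ forces $|S|\geq \alpha(1-\epsilon)n/(2+\alpha)$; the degenerate cases where $B$ is empty or $(1-\epsilon)n-|S|<0$ already give $|S|\geq(1-\epsilon)n$, which is stronger. One point to tighten: the lemma asks for a single $\beta$ valid for \emph{all} $k\geq 3$, but your $\alpha$ comes out depending on $k$, hence so does $\beta$. The fix you hint at in your final paragraph should be made explicit in the main argument: build the $k$-regular graph by starting from a $3$-regular (or $4$-regular, to handle odd $n$ with even $k$) $\alpha$-expander and adjoining $k-3$ (or $k-4$) extra $1$-factors or Hamiltonian cycles; adding edges can only increase $|N(A)\setminus A|$, so the expansion constant, and hence $\beta$, is uniform in $k$. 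With that adjustment the proof is complete.
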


A \emph{tree-decomposition} of  a graph $G$ is given by a tree $T$ whose nodes index a collection $(B_x\subseteq V(G):x\in V(T))$ of sets of vertices in $G$ called  \emph{bags}, such that:
\begin{itemize}
\item For every edge $vw$ of $G$, some bag $B_x$ contains both $v$ and $w$, and 
\item For every vertex $v$ of $G$, the set $\{x\in V(T):v\in B_x\}$ induces a non-empty (connected) subtree of $T$.
\end{itemize}
The \emph{width} of a tree-decomposition is $\max_x |B_x|-1$, and the \emph{treewidth} $\tw(G)$ of a graph $G$ is the minimum width of any tree decomposition of $G$. Path decompositions and pathwidth $\pw(G)$ are defined analogously, except that the underlying tree is required to be a path. Treewidth was introduced (with a different but equivalent definition) by \citet{Halin76} and tree decompositions were introduced by \citet{RS-GraphMinorsII-JAlg86} who proved:

\begin{lemma}[\citep{RS-GraphMinorsII-JAlg86}] 
\label{RS}
Every graph with treewidth $k$ has a $\frac12$-separator of size at most $k+1$. 
\end{lemma}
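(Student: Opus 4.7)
The plan is to extract the separator directly from an optimal tree decomposition $(T, (B_x)_{x \in V(T)})$ of width $k$, by showing that some single bag $B_x$ already $\frac{1}{2}$-separates $G$. Recall that for each $v \in V(G)$ the set $T_v = \{x \in V(T) : v \in B_x\}$ is a non-empty subtree of $T$, and that by the defining property of a tree decomposition any edge $uv \in E(G)$ forces $T_u \cap T_v \neq \emptyset$.

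First I would orient the edges of $T$. For each edge $e = xy$, removing $e$ splits $T$ into two subtrees $T_x(e) \ni x$ and $T_y(e) \ni y$, and correspondingly partitions $V(G)$ into $A_e = \{v : T_v \subseteq T_x(e)\}$, $B_e = \{v : T_v \subseteq T_y(e)\}$, and a third set of vertices whose subtree straddles $e$ (which is exactly $B_x \cap B_y$). Since $|A_e| + |B_e| \leq n$, I can orient $e$ from $x$ to $y$ exactly when $|B_e| > n/2$, giving each edge at most one orientation. Next I would locate a sink, i.e.\ a node $x$ with no outgoing edge: following outgoing edges from any starting point yields a walk that cannot revisit a node (each edge is oriented in at most one direction, and in a tree any return visit must retrace an edge), so the walk must terminate in the finite tree $T$.

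Finally I would verify that any such sink $x$ gives the desired separator. Let $T_1, \ldots, T_d$ be the components of $T - x$, with $y_i$ the neighbor of $x$ in $T_i$, and set $W_i = \{v : T_v \subseteq T_i\}$. The sink condition gives $|W_i| = |B_{xy_i}| \leq n/2$ for each $i$. The sets $W_i$ together with $B_x$ partition $V(G)$, because any $v \notin B_x$ has $x \notin T_v$ and the connected subtree $T_v$ then lies wholly in one $T_i$. And $B_x$ separates the $W_i$ from each other in $G$: any edge of $G$ between $W_i$ and $W_j$ with $i \neq j$ would force a bag to contain endpoints from both, violating $T_u \subseteq T_i$ and $T_v \subseteq T_j$. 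Since $|B_x| \leq k + 1$, this completes the proof. The only delicate point is sink-existence, but it is routine given the acyclicity of the orientation on a finite tree.
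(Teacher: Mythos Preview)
Your argument is correct and is essentially the standard proof of this classical fact. Note, however, that the paper does not actually supply a proof of \cref{RS}; it is stated with a citation to Robertson and Seymour~\citep{RS-GraphMinorsII-JAlg86} and used as a black box. So there is no ``paper's own proof'' to compare against --- you have filled in a proof the authors chose to omit, and the orientation/sink argument you give is the usual one.
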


The notion of \emph{layered tree decompositions} is a key tool in proving our main theorems.  A \emph{layering} of a graph $G$ is a partition $(V_0,V_1,\dots,V_t)$ of $V(G)$ such that for every edge $vw\in E(G)$, if $v\in V_i$ and $w\in V_j$, then $|i-j|\leq 1$. Each set $V_i$ is called a \emph{layer}.  For example, for a vertex $r$ of a connected graph $G$, if $V_i$ is the set of vertices at distance $i$ from $r$, then $(V_0,V_1,\dots)$ is a layering of $G$, called the \emph{bfs layering} of $G$ starting from $r$. A \emph{bfs tree} of $G$ rooted at $r$ is a spanning tree  of $G$ such that for every vertex $v$ of $G$, the distance between $v$ and $r$ in $G$ equals the distance between $v$ and $r$ in $T$. Thus, if $v\in V_i$ then the $vr$-path in $T$ contains exactly one vertex from layer $V_j$ for $0\leq j\leq i$.

The \emph{layered width} of a tree-decomposition $(B_x:x\in V(T))$ of a graph $G$ is the minimum integer $\ell$ such that, for some layering $(V_0,V_1,\dots,V_t)$ of $G$, each bag $B_x$ contains at most $\ell$ vertices in each layer $V_i$. The \emph{layered treewidth} of a graph $G$ is the minimum layered width of a tree-decomposition of $G$. Note that if we only consider the trivial layering in which all vertices belong to one layer, then layered treewidth equals  treewidth plus 1.
\citet{DMW13} introduced layered treewidth\footnote{\citet{DMW13} introduced layered treewidth as a tool to prove upper bounds on the track-number, queue-number and volume of 3-dimensional straight-line grid drawings of graphs. In particular, based on earlier work in \citep{Duj15,DMW05}, they proved that every $n$-vertex graph with bounded layered treewidth has track-number $O(\log n)$, queue-number $O(\log n)$, and has a 3-dimensional straight-line grid  drawing with $O(n\log n)$ volume. All the theorems in this paper giving upper bounds on the layered treewidth of particular graph classes can be combined with the results in \citep{DMW13} to give results for track-number, queue-number, and 3-dimensional straight-line grid  drawings for the same graph class. Motivated by other applications, \citet{Shahrokhi13} independently introduced a definition equivalent to layered treewidth. Our results can also be combined with those of \citet{Shahrokhi13}; details omitted.}

\begin{theorem}[\citep{DMW13}]
\label{DMW} 
Every planar graph has layered treewidth at most $3$. More generally, 
every graph with Euler genus $g$ has layered treewidth at most $2g+3$.
\end{theorem}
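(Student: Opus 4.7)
The plan is to construct the tree-decomposition explicitly from a BFS spanning tree and the duality between primal tree and dual graph. Each bag is indexed by a face of the (triangulated) embedding and consists of the union of a small number of root-to-vertex paths in the BFS tree; since each such path meets each BFS layer in at most one vertex, the layered width is controlled by the number of paths per bag.

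Assume without loss of generality that $G$ is connected and that its embedding on $\Sigma$ is a triangulation (adding edges inside faces cannot decrease layered treewidth). Fix a root $r$, let $T$ be a BFS spanning tree of $G$ with layering $(V_0,V_1,\dots)$, and for each vertex $x$ let $P(x)$ denote the vertex set of the unique $rx$-path in $T$. By Euler's formula $n-m+f = 2-g$, the spanning subgraph $H^*$ of the dual $G^*$ consisting of the duals of non-tree edges of $G$ is connected (verified via the boundary walk of any subtree) and has cyclomatic number exactly~$g$. Choose any spanning tree $\mathcal T$ of $H^*$, and let $e_1 = a_1 b_1,\dots,e_g = a_g b_g$ be the $g$ non-tree primal edges whose duals are chords of $\mathcal T$. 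For each triangular face $f$ with corners $u,v,w$ set
\[
B_f \;:=\; P(u) \cup P(v) \cup P(w) \;\cup\; \bigcup_{i=1}^{g}\bigl( P(a_i) \cup P(b_i) \bigr).
\]
Every $B_f$ contains at most $2g+3$ vertices from any layer. Edge coverage is immediate: face-boundary edges are covered by their endpoints' $P$-paths, and each $e_i$ is covered because $a_i,b_i \in B_f$ for every $f$.

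The real content is the connectivity axiom: for every vertex $v$, the set $U_v := \{f : v \in B_f\}$ must induce a connected subtree of $\mathcal T$. If $v$ lies on some $P(a_i)$ or $P(b_i)$ then $v$ belongs to every bag, and there is nothing to prove. Otherwise neither $a_i$ nor $b_i$ is a descendant of $v$ in $T$ for any $i$, so no $e_i$ is incident to the descendant subtree $T_v$ of $v$. I would then argue connectivity by walking around a thin tubular neighborhood of $T_v$ in $\Sigma$: as $T_v$ is contractible, this neighborhood is a topological disk whose boundary $\gamma$ is a simple closed curve visiting exactly the faces of $U_v$ in cyclic order (with possible repetitions). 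Consecutive faces along $\gamma$ are separated by an edge of $G$ crossed transversally: either a non-tree edge whose dual is in $\mathcal T$ (giving an edge of $\mathcal T$ between two faces of $U_v$), or the unique tree edge of $T$ from $v$ to its parent (the only tree edge incident to $V(T_v)$ that is not in $E(T_v)$), crossed exactly once. The walk therefore traces a closed walk in $\mathcal T$ with a single ``break'' at the parent-edge crossing, and removing this break yields a walk in $\mathcal T$ that stays inside $U_v$ and covers all of~$U_v$, showing that $\mathcal T[U_v]$ is connected.

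The main obstacle is the final step, the topological bookkeeping of the boundary walk $\gamma$: handling chord edges of $T_v$ in $G$ (which are crossed by $\gamma$ twice), confirming that the parent edge of $v$ accounts for the \emph{only} non-$\mathcal T$ transition around $\gamma$, and checking that this picture is genuinely valid on an arbitrary surface $\Sigma$ of Euler genus~$g$ rather than only in the plane.
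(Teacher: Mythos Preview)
Your construction is essentially identical to the paper's (see the proof sketch of \cref{StrongDMW}): triangulate, take a BFS tree $T$, form the co-tree dual graph $D$, pick a spanning tree $T^*$ of $D$, and for each face $f=xyz$ define the bag $C_f=P_x\cup P_y\cup P_z\cup\bigcup_{ab\in X}(P_a\cup P_b)$ where $X=E(D)\setminus E(T^*)$ has size exactly~$g$. The paper does not prove the connectivity axiom itself but defers it to \citep{DMW13}; your boundary-walk idea around a tubular neighbourhood of the descendant subtree $T_v$ is a reasonable route to that step, and your identification of the delicate points (chord edges of $T_v$, uniqueness of the break at the parent edge, validity on a surface of positive genus) is accurate.
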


Layered treewidth is related to local treewidth, which was first introduced by \citet{Epp-Algo-00} under the guise of the `treewidth-diameter' property. A graph class $\mathcal{G}$ has \emph{bounded local treewidth} if there is a function $f$ such that for every graph $G$ in $\mathcal{G}$, for every vertex $v$ of $G$ and for every integer $r\geq0$, the subgraph of $G$ induced by the vertices at distance at most $r$ from $v$ has treewidth at most $f(r)$; see \citep{Grohe-Comb03,DH-SJDM04,DH-SODA04,Epp-Algo-00}. If $f(r)$ is a linear or quadratic function, then  $\mathcal{G}$ has \emph{linear} or  \emph{quadratic} / \emph{local treewidth}. \citet{DMW13} observed that if every graph in some class $\mathcal{G}$ has layered treewidth at most $k$, then $\mathcal{G}$ has linear local treewidth with $f(r) \leq k(2r+1) -1$. They also proved the following converse result for minor-closed classes, where a graph $G$ is \emph{apex} if $G-v$ is planar for some vertex $v$. (Earlier, \citet{Epp-Algo-00} proved that (b) and (d) are equivalent, and  \citet{DH-SODA04} proved that (b) and (c) are equivalent.)

\begin{theorem}[\citep{DMW13,DH-SODA04,Epp-Algo-00}]
\label{MinorClosedLayered}
The following are equivalent for a minor-closed class $\mathcal{G}$ of graphs:
\begin{enumerate}[(a)]
\item $\mathcal{G}$ has bounded layered treewidth.
\item $\mathcal{G}$ has bounded local treewidth.
\item $\mathcal{G}$ has linear local treewidth.
\item $\mathcal{G}$ excludes some apex graph as a minor. 
\end{enumerate}
\end{theorem}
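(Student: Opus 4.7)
The plan is to close the cycle of implications, most of which are already available: (a) $\Rightarrow$ (c) follows from the \citet{DMW13} observation that a layered-treewidth-$k$ decomposition restricted to any ball of radius $r$ hits at most $2r+1$ layers and therefore has width at most $k(2r+1)-1$; (c) $\Rightarrow$ (b) is trivial; and (b) $\Leftrightarrow$ (d) is given by \citet{Epp-Algo-00} together with \citet{DH-SODA04}. The real work is therefore (d) $\Rightarrow$ (a): every minor-closed class excluding some apex graph has bounded layered treewidth.

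My plan for this direction is to invoke the Robertson--Seymour Graph Minor Structure Theorem. Because $\mathcal{G}$ excludes an apex graph, there exist integers $g,h,p$ depending only on $\mathcal{G}$ such that every $G\in\mathcal{G}$ admits a tree-decomposition whose torsos are obtained by clique-summing along cliques of size at most $p$ from graphs that are $h$-almost embeddable in a surface of Euler genus at most $g$ with \emph{no} apex vertices. The exclusion of apices from the pieces is precisely what the apex-minor-freeness buys us, and it is what will make bounded layered treewidth possible (an apex vertex alone would already destroy any bfs-layering argument).

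I would then establish two sublemmas. First, every $h$-almost-embeddable graph of Euler genus $g$ has layered treewidth bounded by some function of $g$ and $h$: start from the bfs layered tree-decomposition of the underlying embedded graph from \cref{DMW}, and for each vortex attached to a face, augment the relevant bags with the width-$h$ path decomposition along the vortex boundary; each layer intersects each augmented bag in at most $O(g+h)$ vertices. Second, clique-sums of bounded size preserve bounded layered treewidth: given layered tree-decompositions of the two summands $G_1,G_2$ glued along a clique $C$ of size at most $p$, re-root the bfs layerings so that $C$ lies in at most two consecutive layers of each side, shift one layering by a bounded constant if necessary so the two sides agree on $C$, and take the natural union of the two tree-decompositions joined at bags containing $C$. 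The layered width grows by at most an additive $O(p)$ per sum, and iterating along the clique-sum tree yields a global layered tree-decomposition of bounded width.

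The main obstacle is the clique-sum step: in general the two summands' bfs layerings will not agree on $C$, and forcing them to agree while also preserving the $|i-j|\le1$ condition for every edge requires careful choice of bfs roots inside $C$ and perhaps a uniform shift by a constant depending on $p$. Once the gluing of layerings is handled cleanly, the remainder of the argument is bookkeeping, and combined with the sublemma on almost-embeddable pieces it yields layered treewidth bounded by a function of $g,h,p$, hence of $\mathcal{G}$ alone.
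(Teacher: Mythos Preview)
The paper does not prove this theorem at all; it merely quotes it, attributing (b)$\Leftrightarrow$(d) to \citet{Epp-Algo-00}, (b)$\Leftrightarrow$(c) to \citet{DH-SODA04}, and the remaining implication to \citet{DMW13}. So there is nothing in the paper to compare against beyond the citations. Your high-level plan for (d)$\Rightarrow$(a)---invoke the apex-free version of the structure theorem and handle almost-embeddable pieces plus clique-sums---is indeed the route taken in \citet{DMW13}.

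That said, your clique-sum step, which you correctly flag as the main obstacle, is not yet a proof and in the form stated would fail. First, you write that ``the layered width grows by at most an additive $O(p)$ per sum'' and then conclude that iterating over the clique-sum tree gives bounded width; these two statements are incompatible, since the number of sums is unbounded. Second, the ``shift one layering by a bounded constant'' idea does not produce a valid global layering: even after shifting, the two bfs layerings can disagree on which of the two consecutive layers each clique vertex occupies, and an edge from a clique vertex to a non-clique vertex of the child piece can then span two layers rather than one. Fixing this by merging adjacent layers doubles the layered width, and doing so once per level of the clique-sum tree gives a bound exponential in the tree depth, not a constant. More fundamentally, once an interior piece has been layered to match its parent, you no longer have the freedom to re-root its bfs to match its children.

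The argument in \citet{DMW13} avoids this by not gluing local layerings at all. One fixes a single global bfs layering of $G$ and then shows, using a strengthened form of the structure theorem in which the adhesion cliques interact nicely with the embedded part of each torso, that each almost-embeddable torso admits a tree decomposition whose bags meet every \emph{global} layer in a bounded number of vertices. The point is that the restriction of the global bfs layering to a torso is controlled by the bounded adhesions, so the genus/vortex argument can be run against it rather than against a local bfs layering. Your sketch of the vortex step is fine; it is the reduction to a single global layering, not the gluing of many local ones, that you are missing.
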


This result applies for neither $(g,k)$-planar graphs nor $(g,d)$-map graphs, since as we now show, these are non-minor-closed classes even for $g=0$, $k=1$ and $d=4$. For example, the $n\times n\times 2$ grid graph is 1-planar, and contracting the $i$-th row in the front grid with the $i$-th column in the back grid gives a $K_n$ minor. Thus 1-planar graphs may contain arbitrarily large complete graph minors.  Similarly, we now construct $(0,4)$-map graphs with arbitrarily large complete graph minors. Let $H_n$ be the $(2n+1)\times(2n+1)$ grid graph in which each internal face is a nation, and the outer face is a lake. Let $G_n$ be the map graph of $H_n$. Since $H_n$ is planar with maximum  degree 4, $G_n$ is a $(0,4)$-map graph. Observe that $G_n$ is the $2n\times2n$ grid graph with both diagonals across each face. Say $V(G_n)=[1,2n]^2$. 
For $i\in[1,n]$, let $R_i$ be the zig-zag path 
$(1,2i-1)(2,2i)(3,2i-1),(4,2i),\dots,(2n-2,2i),(2n-1,2i-1),(2n,2i)$ in $G_n$, 
let $C_i$ be the zig-zag path 
$(2i,1)(2i-1,2)(2i,3),(2i-1,4),\dots,(2i-1,2n-2),(2i,2n-1),(2i-1,2n)$ in $G_n$, 
and let $X_i$ be the subgraph $R_i\cup C_i$. 
Then $X_i$ is connected since $(2i-1,2i-1)\in R_i$ is adjacent to $(2i-1,2i)\in C_i$. 
Note that the sum of the coordinates of each vertex in $R_i$ is even, and the sum of the coordinates of each vertex in $C_j$ is odd. Thus $R_i\cap C_j=\emptyset$ for all $i,j\in[n]$. Clearly $R_i\cap R_j=\emptyset$ and $C_i\cap C_j=\emptyset$ for distinct $i,j\in[1,n]$. Thus $X_i\cap X_j=\emptyset$ for distinct $i,j\in[1,n]$. 
Now, $X_i$ is adjacent to $X_j$ since $(2j,2i)\in R_i$ is adjacent to $(2j-1,2i)\in C_j$. 
Thus $X_1,\dots,X_n$ are the branch sets of a $K_n$ minor in $G_n$. This example shows that $(0,4)$-map graphs may contain arbitrarily large complete graph minors. 

Sergey Norin established the following connection between layered treewidth and treewidth.

\begin{lemma}[Norin; see~\citep{DMW13}]
\label{Norine}
Every $n$-vertex graph with layered treewidth $k$ has treewidth at most $2\sqrt{kn}-1$.
\end{lemma}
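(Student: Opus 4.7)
The plan is to extract from a layered tree decomposition a small separator of $G$ whose removal leaves a low-treewidth subgraph, then invoke the standard inequality $\tw(G) \leq \tw(G-S)+|S|$. Fix a tree decomposition $(B_x:x\in V(T))$ and a layering $(V_0,V_1,\ldots,V_t)$ realising layered treewidth $k$, so that $|B_x\cap V_i|\leq k$ for all $x$ and $i$. For a positive integer parameter $h$ (to be chosen at the end), let
\[
S_j \;:=\; \bigcup_{\,i\,\equiv\,j\,\pmod{h}} V_i \qquad (j=0,1,\ldots,h-1).
\]
These $h$ sets partition $V(G)$, so by pigeonhole some $j$ satisfies $|S_j|\leq n/h$.

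The key point is that edges of $G$ only run between consecutive layers, so every connected component of $G-S_j$ is contained in a strip of at most $h-1$ consecutive layers $V_{j+ih+1},\ldots,V_{j+(i+1)h-1}$. Restricting the given tree decomposition to such a strip yields a tree decomposition of the induced subgraph whose bags have size at most $k(h-1)$, since each bag contributes at most $k$ vertices per layer and spans at most $h-1$ layers. Hence $\tw(G-S_j)\leq k(h-1)-1$, and adjoining $S_j$ to every bag of a tree decomposition of $G-S_j$ gives
\[
\tw(G) \;\leq\; \tw(G-S_j) + |S_j| \;\leq\; k(h-1)-1 + \tfrac{n}{h} \;=\; kh + \tfrac{n}{h} - k - 1.
\]

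It remains to choose $h$ to minimise $kh+n/h$. Taking $h:=\lceil\sqrt{n/k}\,\rceil$ gives $n/h\leq\sqrt{kn}$ and $kh\leq\sqrt{kn}+k$, so the right-hand side above is at most $2\sqrt{kn}+k-k-1=2\sqrt{kn}-1$, as required. The only step that needs a fresh idea is the layer-separator construction; everything else, including the rounding of $h$ and degenerate cases such as $h>t+1$ (where some $S_j$ is empty and the strip bound is trivially satisfied) or $n\leq k$ (where the claimed bound is weaker than the trivial bound $n-1$), is routine bookkeeping and presents no real obstacle.
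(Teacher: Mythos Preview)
The paper does not supply a proof of this lemma; it is stated with attribution to Norin and a pointer to \citep{DMW13}. Your argument is correct and is the standard one: delete every $h$-th layer (choosing the cheapest residue class by pigeonhole), observe that each component of the remainder sits inside a strip of at most $h-1$ consecutive layers and hence has treewidth at most $k(h-1)-1$ via the restriction of the given layered tree decomposition, add the deleted set $S_j$ back into every bag, and optimise $h\approx\sqrt{n/k}$. The arithmetic with $h=\lceil\sqrt{n/k}\,\rceil$ and the boundary cases you flag all go through as written.
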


To prove all the $O(\sqrt{n})$ treewidth bounds introduced in \cref{Intro}, we first establish a tight upper bound on the layered treewidth, and then apply \cref{Norine}. One conclusion, therefore, of this paper is that layered treewidth is a useful parameter when studying non-minor-closed graph classes (which is a research direction suggested by \citet{DMW13}). In general, layered treewidth is an interesting measure of the structural complexity of a graph in its own right. 

%

We now show that bounded local treewidth does not imply bounded layered treewidth (and thus \cref{MinorClosedLayered} does not necessarily hold in non-minor-closed classes). First note that a graph with maximum degree $\Delta$ contains  $O((\Delta-1)^r)$ vertices at distance at most $r$ from a fixed vertex (the \emph{Moore} bound). Thus graphs with maximum degree $\Delta$ have bounded local treewidth. Let $G_n$ be the $n\times n\times n$ grid graph, which has maximum degree 6. Thus  $\{G_n:n\in\mathbb{N}\}$ has bounded local treewidth. Moreover, the subgraph of $G_n$ induced by the vertices at distance at most $r$ from a fixed vertex is a subgraph of $G_{2r}$, which is easily seen to have treewidth $O(r^2)$. Thus  $\{G_n:n\in\mathbb{N}\}$ has quadratic local treewidth. By \cref{TreewidthThreeDimGrid} in \cref{gkPlanarGraphs} below, $\tw(G_n)\geq \frac{1}{6}n^2$. If $G_n$ has layered treewidth $k$, then $\tw(G_n)\leq 2\sqrt{k n^3}$ by  \cref{Norine}. Thus $\frac{1}{6}n^2 \leq 2\sqrt{k n^3}$, which implies that  $k\geq \frac{1}{144} n$, and $\{G_n:n\in\mathbb{N}\}$ has unbounded layered treewidth. 

%

We conclude this section by mentioning some negative results. \citet{DSW16} constructed an infinite family of expander graphs that have (geometric) thickness 2, have  3-page book embeddings, have 2-queue layouts, and have 4-track layouts. By \cref{Expander}, \cref{RS}  and \cref{Norine}, such graphs have treewidth $\Omega(n)$ and layered treewidth $\Omega(n)$. This means that our results cannot be extended to bounded thickness, bounded page number, bounded queue number, or bounded track number graphs. 
 
\section{\boldmath $k$-Planar Graphs}
\label{kPlanarGraphs}

The following theorem is our first contribution. 

\begin{theorem}
\label{kPlanarLayeredTreewidth}
Every $k$-planar graph $G$ has layered treewidth at most $6(k+1)$.
\end{theorem}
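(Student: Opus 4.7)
\emph{Plan.} I would reduce to the planar case by planarising and then lift the tree decomposition back. First, replace each crossing of the $k$-planar drawing of $G$ by a new degree-$4$ vertex to obtain a planar graph $G'$ with $V(G)\subseteq V(G')$; each edge of $G$ then corresponds to a path of length at most $k+1$ in $G'$. By \cref{DMW}, $G'$ admits a tree decomposition $(B'_y)_{y\in V(T')}$ of layered width at most $3$ with respect to some layering $L_0,L_1,\dots$ of $G'$.

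Next, build a layering of $G$ by merging $(k+1)$ consecutive $G'$-layers and restricting to $V(G)$:
\[
V_j := \left(\bigcup_{i = j(k+1)}^{(j+1)(k+1)-1} L_i\right)\cap V(G).
\]
Since the endpoints of any $G$-edge are joined by a path of length at most $k+1$ in $G'$, they lie in $G'$-layers at most $k+1$ apart and hence in super-layers at most one apart, so $(V_j)$ is a valid layering of $G$. To lift the tree decomposition, I would fix a projection $\Phi\colon V(G')\to V(G)$ that is the identity on $V(G)$ and sends each crossing to one of the four endpoints of the two incident $G$-edges, arranged so that $\Phi^{-1}(u)$ induces a connected subgraph of $G'$ for every $u\in V(G)$ --- for instance, by splitting each subdivision path in half by position and assigning each half to the nearer endpoint. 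Define $B_y:=\Phi(B'_y)$. Along the subdivision path of any $G$-edge $uv$ the $\Phi$-image must switch from $u$ to $v$ at some $G'$-edge, which lies in a bag $B'_y$, giving $u,v\in B_y$; and $\{y:u\in B_y\}=\bigcup_{v\in\Phi^{-1}(u)} T'_v$ is connected because $\Phi^{-1}(u)$ is connected in $G'$.

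The main obstacle is bounding $|B_y\cap V_j|$ by $6(k+1)$. Each vertex $v\in V(G')$ lies at $G'$-distance at most $k$ from $\Phi(v)$, so the $G'$-layer of $v$ is within $k$ of that of $\Phi(v)$, placing $v$ in $\Phi(v)$'s super-layer or an adjacent one. A straightforward count that allows all three potentially contributing $G'$-super-layers (each with at most $3(k+1)$ vertices in $B'_y$) only yields $9(k+1)$. Getting the tighter bound $6(k+1)$ requires a more careful choice of $\Phi$ together with an argument that effectively confines the relevant $v\in B'_y$ to two consecutive super-layers; arranging this one-sided layer-shift while preserving the connectivity of every $\Phi^{-1}(u)$ in $G'$ is the delicate technical step.
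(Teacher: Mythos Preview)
Your high-level plan---planarise to $G'$, apply \cref{DMW}, merge $k+1$ consecutive $G'$-layers, and lift the decomposition---is exactly the paper's. But lifting via a single-valued projection $\Phi$ has a gap more basic than the $9(k+1)$-versus-$6(k+1)$ count you flag. Each dummy vertex $x$ lies on \emph{two} subdivision paths (one for each crossing edge), so your ``nearest endpoint along the path'' rule is ill-defined at $x$. Any well-defined single-valued $\Phi$ must send $x$ to just one of the four endpoints, and then you cannot secure both connected fibres and your ``switch'' property simultaneously: if you enforce connectivity of every $\Phi^{-1}(u)$ (say by BFS from $V(G)$ in $G'$), some dummy on the $uv$-path may be assigned to an endpoint of the \emph{other} edge through it, the images along that path leave $\{u,v\}$, and there need be no $G'$-edge whose two ends map to $u$ and to $v$. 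Your edge-condition argument then fails, and $(\Phi(B'_y))_y$ need not be a tree decomposition of $G$ at all.

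The paper's device is a \emph{two-valued} replacement instead of a function. Orient every edge of $G$; in each bag of $T'$ replace every dummy $x$ by the \emph{tails} of both edges crossing at $x$, leaving original vertices in place. For $v\in V(G)$, the $G'$-vertices that now put $v$ into a bag are precisely $\{v\}$ together with all dummies on edges whose tail is $v$---a union of paths emanating from $v$ in $G'$, hence connected---so the subtree condition holds. For an oriented edge $vw$, the division vertex $x$ adjacent to $w$ shares a bag with $w$ in $T'$; after replacement that bag contains both $v$ (as a tail at $x$) and $w$, giving the edge condition. With this replacement the paper reads off the bound directly: a super-layer of $G$ comprises $k+1$ layers of $G'$, each contributing at most $3$ vertices to a bag of $T'$, and each such vertex is replaced by at most two vertices of $G$, yielding $6(k+1)$ without any one-sided layer-shift.
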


\begin{proof}
Draw $G$ in the plane with at most $k$ crossings per edge, and arbitrarily orient each edge of $G$. Let $G'$ be the graph obtained from $G$ by replacing each crossing by a new degree-4 vertex. Then $G'$ is planar. By \cref{DMW}, $G'$ has layered treewidth at most 3. That is, there is a tree decomposition $T'$ of $G'$, and a layering $V'_0,V_1',\dots$ of $G'$, such that each bag of $T'$ contains at most three vertices in each layer $V'_i$. For each vertex $v$ of $G'$, let $T'_v$ be the subtree of~$T'$ formed by the bags that contain $v$. 

Let $T$ be the decomposition of $G$ obtained by replacing each occurrence of a dummy vertex $x$ in a bag of $T'$ by  the tails of the two edges that cross at~$x$. We now show that $T$ is a tree-decomposition of $G$. For each vertex $v$ of $G$, let $T_v$ be the subgraph of $T$ formed by the bags that contain $v$. Let $G'_v$ be the subgraph of $G'$ induced by $v$ and the division vertices on the edges for which $v$ is the tail. Then $G'_v$ is connected. Thus $T'_v$, which is precisely the set of bags of $T'$ that intersect $G'_v$, form a (connected) subtree of $T'$. Moreover, for each oriented edge~$vw$ of~$G$, if $x$ is the division vertex of $vw$ adjacent to $w$, then $T'_x$ and $T'_w$ intersect. Since $T_v$ contains $T'_x$, and $T_w$ contains $T'_w$, we have that $T_v$ and $T_w$ intersect. Thus $T$ is a tree-decomposition of $G$. 

Note that $\dist_{G'}(v,w)\leq k+1$ for each edge $vw$ of $G$. Thus, if $v\in V'_i$ and $w\in V'_j$ then $|i-j|\leq k+1$. Let $V_0$ be the union of the first $k+1$ layers restricted to $V(G)$, let $V_1$ be the union of the second $k+1$ layers restricted to $V(G)$, and so on. That is, for $i\geq 0$, let $V_i:=V(G)\cap (V'_{(k+1)i}\cup V'_{(k+1)i+1}\cup \dots\cup V'_{(k+1)(i+1)-1})$. Then $V_0,V_1,\dots$ is a partition of $V(G)$. Moreover, if $v\in V_i$ and $w\in V_j$ for some edge $vw$ of $G$, then $|i-j|\leq 1$. Thus $V_1,V_2,\dots$ is a layering of $G$. 

Since each layer in $G$ consists of at most $k+1$ layers in $G'$, and each layer in $G'$ contains at most three vertices in a single bag, each of which are replaced by at most two vertices in $G$, the layered treewidth of this decomposition is at most $6(k+1)$. 
\end{proof}


\cref{Norine} and \cref{kPlanarLayeredTreewidth} imply the upper bound in \cref{kPlanarTreewidth}:

\begin{theorem}
\label{kPlanarTreewidthUpperBound}
Every $k$-planar $n$-vertex graph has treewidth at most $2\sqrt{6(k+1)n}$.
\end{theorem}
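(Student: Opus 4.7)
The plan is entirely routine: this is a direct composition of the two previous results, so there is no real obstacle. I would proceed as follows.

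First, I would invoke \cref{kPlanarLayeredTreewidth}, which was just proved, to conclude that any $k$-planar $n$-vertex graph $G$ has layered treewidth at most $6(k+1)$. This step carries all of the actual geometric/topological content: the planarization $G'$, the application of \cref{DMW} to get layered treewidth $3$ for $G'$, and the coarsening of layers by factor $k+1$ to control the stretch introduced by replacing each crossing with a degree-$4$ vertex.

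Second, I would feed this bound into \cref{Norine} (Norin's lemma), which converts a layered treewidth bound of $\ell$ on an $n$-vertex graph into a treewidth bound of $2\sqrt{\ell n}-1$. Setting $\ell=6(k+1)$ gives
\[
\tw(G)\;\leq\; 2\sqrt{6(k+1)n}-1 \;\leq\; 2\sqrt{6(k+1)n},
\]
which is exactly the desired bound. The $-1$ is discarded for a clean statement.

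That is the entire proof: it is a two-line corollary of \cref{kPlanarLayeredTreewidth} and \cref{Norine}. No auxiliary construction or case analysis is needed, and the only mild bookkeeping is to absorb the $-1$ term into the final estimate.
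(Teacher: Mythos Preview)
Your proposal is correct and matches the paper's approach exactly: the theorem is stated there as an immediate consequence of \cref{kPlanarLayeredTreewidth} and \cref{Norine}, precisely the two-line composition you describe (including dropping the $-1$).
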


We now prove the corresponding lower bound. 

\begin{theorem}
\label{kPlanarTreewidthLowerBound}
For $1\leq k\leq \frac32 n$ there is a $k$-planar graph on $n$ vertices with treewidth at least $c\sqrt{kn}$ for some constant $c>0$.
\end{theorem}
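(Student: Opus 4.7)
The plan is to construct, for each $(n,k)$ in the given range, an $n$-vertex $k$-planar graph of treewidth $\Omega(\sqrt{(k+1)n})$, using a multidimensional grid graph drawn via a generic linear projection of its natural embedding in $\mathbb{R}^d$.

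In the main case I would take the $(k+1) \times m \times m$ grid graph $G$ with $m = \lfloor\sqrt{n/(k+1)}\rfloor$, padded with isolated vertices to reach exactly $n$ vertices. The $k$-planarity follows from the linear projection drawing illustrated in \cref{GridGraph}. For the treewidth lower bound I would invoke \cref{TreewidthThreeDimGrid} (proved later in the paper), which says the $a \times b \times c$ grid has treewidth $\Omega(\min\{ab,bc,ca\})$. With our parameters this evaluates to $\tw(G) = \Omega((k+1)m) = \Omega(\sqrt{(k+1)n})$, provided $m \geq k+1$, i.e.\ $(k+1)^3 \leq n$.

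To extend to the full range $k \leq \tfrac{3}{2}n$, I would generalize to the $d$-dimensional grid with all side-lengths equal to $p$, where $p = \lceil\sqrt{n/k}\rceil$ and $d$ is the smallest integer satisfying $p^d \geq n$; for $k > n/4$ this reduces to the Boolean hypercube with $d = \lfloor\log_2 n\rfloor$ and $p = 2$. A separator argument analogous to the three-dimensional case gives treewidth $\Omega(p^{d-1}) = \Omega(\sqrt{(k+1)n})$. The main obstacle is bounding the crossings per edge under a generic linear projection $\pi\colon \mathbb{R}^d \to \mathbb{R}^2$: two grid edges with tails $\vec{x}, \vec{y}$ and respective directions $e_i, e_j$ project to crossing segments iff $\vec{y}-\vec{x}$ lies in the Minkowski sum of the unit square in the $(e_i,e_j)$-plane and the $(d-2)$-dimensional kernel of $\pi$. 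Counting integer lattice points in the intersection of this tube with the grid yields $O(p^{d-2})$ crossings per ordered direction-pair; summing over the $d-1$ choices of $j \neq i$ and absorbing the factor $d-1 = O(\log n / \log(n/k))$ into the constant gives $O(k)$ crossings per edge, as required.
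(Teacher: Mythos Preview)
Your three-dimensional grid construction is correct for the regime $k+1\lesssim n^{1/3}$ and gives a clean geometric argument there, genuinely different from the paper's. (Minor quibble: \cref{TreewidthThreeDimGrid} as stated needs $m\geq 2(k+1)$, not just $m\geq k+1$.) The paper handles the full range $1\le k\le\tfrac32 n$ in one stroke by a quite different device: start from a cubic expander $H$ on $N$ vertices, which has treewidth $\Omega(N)$; draw it arbitrarily in the plane, so every edge is crossed fewer than $|E(H)|=\tfrac32 N$ times; then subdivide each edge $O(N/k)$ times. Subdivision preserves treewidth, so the result is a $k$-planar graph on $n'=O(N^2/k)$ vertices with treewidth $\Omega(N)=\Omega(\sqrt{kn'})$. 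No case analysis on the size of $k$ is needed.

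Your extension to $d$-dimensional grids for $k\gg n^{1/3}$, however, has a genuine gap. You compute that a generic projection yields $O\bigl((d-1)\,p^{d-2}\bigr)$ crossings per edge with $p^{d-2}\approx k$, and then assert that the factor $d-1=O\bigl(\log n/\log(n/k)\bigr)$ can be ``absorbed into the constant''. It cannot: for $k=\Theta(n)$ (well inside the range $k\le\tfrac32 n$) one has $d-1=\Theta(\log n)$, so the drawing has $\Theta(k\log n)$ crossings per edge rather than $O(k)$, and rescaling $p$ to push the crossing count down to $k$ costs the same factor in the treewidth. There is a second problem in the same regime: the lower bound $\tw\bigl([p]^d\bigr)=\Omega(p^{d-1})$ is only valid for bounded $d$; for the hypercube $Q_d$ (your $p=2$ case) Harper's vertex-isoperimetric inequality gives $\tw(Q_d)=\Theta(2^d/\sqrt{d})$, strictly smaller than $2^{d-1}$. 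So both the $k$-planarity side and the treewidth side of the high-dimensional argument lose polylogarithmic factors, and the approach does not recover the tight bound for large~$k$.
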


\begin{proof}
Let $G$ be a cubic expander with $n$ vertices. Then $G$ has treewidth at least~$\epsilon n$ for some constant $\epsilon>0$ (see for example \citep{GM-JCTB}). Consider a straight-line drawing of $G$. Clearly, each edge is crossed less than $|E(G)|=\frac32 n$ times.  Subdivide each edge of $G$ at most $\frac{3n}{2k}$ times to produce a $k$-planar graph~$G'$ with $n'$~vertices, where $n'\leq n +\frac{3n}{2}\,\frac{3n}{2k}<\frac{4n^2}{k}$. 
Subdivision does not change the treewidth of a graph. Thus $G'$ has treewidth at least $\epsilon n\geq \frac{\epsilon}{2}\sqrt{kn'}$.
\end{proof}

Combining the bound of \cref{kPlanarTreewidthUpperBound} with the trivial upper bound $\tw(G)\leq n$ for $k\ge n$ shows that the maximum treewidth of $k$-planar $n$-vertex graphs is $\Theta(\min\{n,\sqrt{kn}\})$ for arbitrary $k$ and $n$. This completes the proof of \cref{kPlanarTreewidth}.

\section{\boldmath $(g,k)$-Planar Graphs} 
\label{gkPlanarGraphs}

Recall that a graph is $(g,k)$-planar if it can be drawn in a surface of Euler genus at most $g$ with at most $k$ crossings on each edge. The proof method used in \cref{kPlanarLayeredTreewidth} in conjunction with \cref{DMW} leads to the following theorem.

\begin{theorem}
\label{gkPlanarLayeredTreewidth}
Every $(g,k)$-planar graph $G$ has layered treewidth at most $(4g+6)(k+1)$.
\end{theorem}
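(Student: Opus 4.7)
The plan is to follow the template of the proof of \cref{kPlanarLayeredTreewidth}, simply replacing the planar layered treewidth bound of $3$ (from \cref{DMW}) with the general surface bound of $2g+3$ (also from \cref{DMW}). Concretely, I would fix a drawing of $G$ in a surface of Euler genus at most $g$ with at most $k$ crossings per edge, orient each edge arbitrarily, and form $G'$ by replacing each crossing point with a new degree-$4$ vertex. Crucially, $G'$ embeds in the same surface as $G$, so $G'$ has Euler genus at most $g$, and \cref{DMW} provides a tree-decomposition $T'$ of $G'$ together with a layering $V'_0,V'_1,\dots$ of $G'$ such that each bag of $T'$ contains at most $2g+3$ vertices in each layer $V'_i$.

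Next, I would lift this to $G$ exactly as in the proof of \cref{kPlanarLayeredTreewidth}: form $T$ from $T'$ by replacing every occurrence of a crossing-vertex $x$ in a bag by the tails of the two edges of $G$ that cross at $x$. The verification that $T$ is a valid tree-decomposition of $G$ is verbatim the same as before — for each oriented edge $vw$ of $G$ the corresponding oriented path in $G'$ from $v$ to $w$ is connected, so the union of the subtrees $T'_x$ of $T'$ over all division vertices $x$ on the $vw$-path (together with $T'_v$) is connected and meets $T'_w$, so $T_v$ and $T_w$ meet in $T$.

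For the layering of $G$, I would group consecutive layers of $G'$ in blocks of $k+1$: set $V_i := V(G)\cap(V'_{(k+1)i}\cup\cdots\cup V'_{(k+1)(i+1)-1})$. Since each edge $vw$ of $G$ corresponds to a path in $G'$ of length at most $k+1$, the $G'$-layers of $v$ and $w$ differ by at most $k+1$, which guarantees $|i-j|\leq 1$ whenever $v\in V_i$ and $w\in V_j$. Finally, bounding the layered width: each layer $V_i$ of $G$ collects at most $k+1$ layers of $G'$; in each $G'$-layer a bag of $T'$ contains at most $2g+3$ vertices of $G'$; and each crossing vertex of $G'$ in such a bag is replaced by at most $2$ vertices of $G$. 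This yields at most $2(2g+3)(k+1) = (4g+6)(k+1)$ vertices per layer per bag, giving the claimed bound.

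There is no real obstacle: every step generalises the $k$-planar argument without modification, since the only place planarity was used was to invoke \cref{DMW}, which already comes in the genus-$g$ version. The mild thing to be careful about is that inserting a degree-$4$ crossing vertex into a drawing on a surface of Euler genus $g$ keeps the resulting graph embeddable on the same surface, so the hypothesis ``Euler genus at most $g$'' passes to $G'$; this is immediate from the given embedding of $G$.
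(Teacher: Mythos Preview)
Your proposal is correct and matches the paper's own proof essentially line for line: the paper also replaces crossings by degree-$4$ vertices to obtain a graph $G'$ of Euler genus at most $g$, applies \cref{DMW} to get a layered tree-decomposition of $G'$ with layered width $2g+3$, replaces each dummy vertex in a bag by the tails of the two crossing edges, and groups $G'$-layers in blocks of $k+1$ to obtain the bound $(4g+6)(k+1)$. There is nothing to add.
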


\begin{proof}
Consider a drawing of $G$ with at most $k$ crossings per edge on a surface $\Sigma$ of Euler genus $g$. Arbitrarily orient each edge of $G$. Let $G'$ be the graph obtained from $G$ by replacing each crossing by a new degree-4 vertex. Then $G'$ is embedded in $\Sigma$ with no crossings, and thus has Euler genus at most $g$. By \cref{DMW}, $G'$ has layered treewidth at most $2g+3$. That is, there is a tree decomposition $T'$ of $G'$, and a layering $V'_0,V_1',\dots$ of $G'$, such that each bag of $T'$ contains at most $2g+3$ vertices in each layer $V'_i$. For each vertex $v$ of $G'$, let $T'_v$ be the subtree of $T'$ formed by the bags that contain $v$. 

Let $T$ be the decomposition of $G$ obtained by replacing each occurrence of a dummy vertex $x$ in a bag of $T'$ by  the tails of the two edges that cross at~$x$. We now show that $T$ is a tree-decomposition of $G$. For each vertex $v$ of $G$, let $T_v$ be the subgraph of $T$ formed by the bags that contain $v$. Let $G'_v$ be the subgraph of $G'$ induced by $v$ and the division vertices on the edges for which $v$ is the tail. Then $G'_v$ is connected. Thus $T'_v$, which is precisely the set of bags of $T'$ that intersect $G'_v$, form a (connected) subtree of $T'$. Moreover, for each oriented edge $vw$ of $G$, if $x$ is the division vertex of $vw$ adjacent to $w$, then $T'_x$ and $T'_w$ intersect. Since $T_v$ contains $T'_x$, and $T_w$ contains $T'_w$, we have that $T_v$ and $T_w$ intersect. Thus $T$ is a tree-decomposition of $G$. 

Note that $\dist_{G'}(v,w)\leq k+1$ for each edge $vw$ of $G$. Thus, if $v\in V'_i$ and $w\in V'_j$ then $|i-j|\leq k+1$. Let $V_0$ be the union of the first $k+1$ layers restricted to $V(G)$, let $V_1$ be the union of the second $k+1$ layers restricted to $V(G)$, and so on. That is, for $i\geq 0$, let $V_i:=V(G)\cap (V'_{(k+1)i}\cup V'_{(k+1)i+1}\cup \dots\cup V'_{(k+1)(i+1)-1})$. Then $V_0,V_1,\dots$ is a partition of $V(G)$. Moreover, if $v\in V_i$ and $w\in V_j$ for some edge $vw$ of $G$, then $|i-j|\leq 1$. Thus $V_1,V_2,\dots$ is a layering of $G$. Since each layer in $G$ consists of at most $k+1$ layers in $G'$, and each layer in $G'$ contains at most $2g+3$ vertices in a single bag, each of which is replaced by at most two vertices in $G$, the layered treewidth of this decomposition is at most $(4g+6)(k+1)$. 
\end{proof}

\cref{gkPlanarLayeredTreewidth} and \cref{Norine} imply:

\begin{theorem}
\label{gkPlanarTreewidthUpperBound}
Every $n$-vertex $(g,k)$-planar graph has treewidth at most $$2\sqrt{(4g+6)(k+1)n}.$$
\end{theorem}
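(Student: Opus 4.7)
The plan is to invoke the two results named immediately before the statement and chain them. By \cref{gkPlanarLayeredTreewidth}, every $(g,k)$-planar graph $G$ admits a tree-decomposition of layered width at most $(4g+6)(k+1)$. Norin's inequality \cref{Norine} says that an $n$-vertex graph of layered treewidth $\ell$ has treewidth at most $2\sqrt{\ell n}-1$. Applying it with $\ell=(4g+6)(k+1)$ yields
\[
\tw(G)\;\leq\; 2\sqrt{(4g+6)(k+1)\,n}-1 \;\leq\; 2\sqrt{(4g+6)(k+1)\,n},
\]
which is the desired bound.

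There is essentially no obstacle at this step: all the real work has already been done in \cref{gkPlanarLayeredTreewidth}, where the layered-treewidth bound is lifted from the planar case (\cref{DMW}) via the standard trick of replacing crossings by dummy degree-$4$ vertices and then coarsening the induced layering by groups of $k+1$ consecutive layers. The only cosmetic choice here is whether to keep the sharper ``$-1$'' coming from \cref{Norine}; dropping it produces exactly the stated form. No further case analysis (e.g.\ the trivial bound $\tw(G)\leq n-1$) is needed for this statement, although it is what one combines with the above to obtain the $\min\{n,\sqrt{(g+1)(k+1)n}\}$ upper bound of \cref{gkPlanarTreewidth}.
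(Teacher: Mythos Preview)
Your proposal is correct and matches the paper exactly: the paper simply states that \cref{gkPlanarLayeredTreewidth} and \cref{Norine} imply the result, which is precisely the chaining you describe. The only difference is cosmetic, as the paper does not even write out the two-line derivation or mention dropping the ``$-1$''.
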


We now show that the bounds in \cref{gkPlanarLayeredTreewidth} and \cref{gkPlanarTreewidthUpperBound} are tight up to a constant factor. 

\begin{theorem}
\label{gkPlanarTreewidthLowerBound}
For all $g,k\geq 0$ and infinitely many $n$ there is an $n$-vertex $(g,k)$-planar graph with treewidth 
$\Omega(\sqrt{(g+1)(k+1)n})$ and layered treewidth $\Omega((g+1)(k+1))$. 
\end{theorem}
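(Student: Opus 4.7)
The plan is to extend the expander-and-subdivision strategy of \cref{kPlanarTreewidthLowerBound} by now drawing the expander on a higher-genus surface. First, invoke \cref{Expander} with degree~$3$ to obtain a $3$-regular expander $H$ on $m$ vertices (for $m\geq 4$) whose $\tfrac12$-separators all have size $\Omega(m)$; by \cref{RS} this gives $\tw(H)\geq \beta m - 1$ for a constant $\beta>0$. Second, draw $H$ on a surface of Euler genus $g$ with $O(m/(g+1))$ crossings per edge; \cref{thm:few-crossings}, applied to the $\tfrac{3m}{2}$-edge graph $H$, provides such a drawing up to a polylogarithmic-in-$g$ factor. Third, subdivide each edge of $H$ with $c$ crossings into $\lceil c/k\rceil$ pieces so that each piece inherits at most $k$ crossings, handling the case $k=0$ by restricting to values of $m$ small enough that $H$ embeds in the surface without crossings. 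The resulting graph $G$ is $(g,k)$-planar. Choosing $m:=c_0\sqrt{(g+1)(k+1)n}$ for a suitable constant $c_0$ gives
\[
|V(G)| \;=\; m + \tfrac{3m}{2}\cdot O\!\left(\tfrac{m}{(g+1)(k+1)}\right) \;=\; O\!\left(\tfrac{m^2}{(g+1)(k+1)}\right) \;=\; O(n),
\]
and adding isolated vertices makes this exactly $n$ for infinitely many $n$ (those attainable under \cref{Expander}'s parity constraint).

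Because subdivision preserves treewidth, $\tw(G)\geq \beta m - 1 = \Omega(\sqrt{(g+1)(k+1)n})$, establishing the treewidth bound. The layered-treewidth bound then follows for free via the contrapositive of \cref{Norine}: any graph of treewidth $T$ on $n$ vertices has layered treewidth at least $T^2/(4n)$, which in our setting is $\Omega((g+1)(k+1))$. So a single construction yields both lower bounds simultaneously.

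The main obstacle is the drawing step. \cref{thm:few-crossings} yields $O((m/g)\log^2 g)$ crossings per edge, so a literal execution of the plan only gives $\tw(G) = \Omega(\sqrt{(g+1)(k+1)n}/\log g)$. To reach the tight constant required by the statement, one must either strengthen the drawing construction for $H$ specifically (a cubic expander's bounded degree and strong symmetry should allow a per-edge crossings bound without the log factor, and in particular one must verify a worst-case, not merely average-case, per-edge bound), or else replace $H$ by another graph family with treewidth $\Omega(m)$ that admits, by its explicit construction, an embedding in the genus-$g$ surface with $O(m/(g+1))$ crossings per edge distributed approximately uniformly---for instance, a mesh-and-handle transversal hybrid engineered directly on the genus-$g$ surface. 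The boundary cases $g=0$ (where \cref{kPlanarTreewidthLowerBound} already applies) and $k=0$ (crossing-free embedding of a small expander) are handled as easier special cases.
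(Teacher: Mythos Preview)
Your plan has a real gap that you yourself flag: \cref{thm:few-crossings} only guarantees $O((m/g)\log^2 g)$ crossings per edge, so the subdivision step produces a graph on $\Theta(m^2\log^2 g/((g+1)(k+1)))$ vertices and the resulting treewidth bound is off by a $\log g$ factor. Neither of your proposed fixes closes the gap. Removing the $\log^2 g$ from \cref{thm:few-crossings}, even for cubic graphs, is precisely the open conjecture stated at the start of Section~5; the paper explicitly notes that \emph{if} that conjecture held, your exact argument would give a simple proof of \cref{gkPlanarTreewidthLowerBound}. And the ``mesh-and-handle transversal hybrid'' is a name, not a construction: you would still have to exhibit the graph, verify a per-edge crossing bound, and prove a treewidth lower bound for it---which is the entire content of the theorem.

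The paper sidesteps the issue by a genuinely different construction. Instead of drawing a large expander with crossings and subdividing, it takes a \emph{small} $4$-regular expander $H$ on $m=\Theta(g)$ vertices, which embeds \emph{crossing-free} on the genus-$g$ surface (it has only $2m$ edges). Each vertex of $H$ is then blown up into a $q\times q\times r$ grid with $r=k+1$ (already $k$-planar by the linear projection in \cref{GridGraph}), and each edge of $H$ is replaced by a matching so that adjacent blocks form a $2q\times q\times r$ grid. The result is $(g,k)$-planar by construction, with no appeal to \cref{thm:few-crossings} at all. The treewidth lower bound comes from a separator-lifting argument using \cref{ThreeDimGrid}: a small $\tfrac12$-separator of the blown-up graph would have to miss most of each block and each double-block, forcing the ``big components'' in adjacent blocks to coincide and thereby inducing a small separator of $H$, contradicting expansion. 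This is more work than ``subdivision preserves treewidth,'' but it delivers the tight $\Omega(\sqrt{(g+1)(k+1)n})$ with no logarithmic loss; the layered-treewidth bound then follows via \cref{Norine} exactly as you say.
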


The proof of this result depends on the separation properties of the $p\times q\times r$ grid graph (which is $(r-1)$-planar). 
The next two results are not optimal, but have simple proofs and are all that is needed for the main proof that follows. 

\begin{lemma}
\label{TwoDimGrid}
For $q\geq (\frac{1}{1-\epsilon})r$, every $\epsilon$-separator of the $q \times r$ grid graph has size at least $r$.
\end{lemma}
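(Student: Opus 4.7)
The plan is to argue by contradiction: suppose $S$ is an $\epsilon$-separator of the $q\times r$ grid with $|S|\leq r-1$, and I will exhibit a single connected component of $G-S$ containing more than $\epsilon qr$ vertices, contradicting the definition of an $\epsilon$-separator. The structural insight is that the $q\times r$ grid is naturally covered by $r$ column-paths of length $q$ and $q$ row-paths of length $r$, and a separator of size less than $r$ is too small to damage both families simultaneously.

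First I would apply the pigeonhole principle to the $r$ columns: since each vertex of $S$ lies in exactly one column and $|S|<r$, there is a column $C$ entirely disjoint from $S$, which is a connected $q$-vertex path inside $G-S$. The same counting applied to the $q$ rows (each vertex of $S$ hits only one row) yields at least $q-|S|\geq q-r+1$ rows that avoid $S$; each such row is a connected $r$-vertex path and meets $C$ in exactly one vertex. Hence $C$ together with all untouched rows fuses into a single component $K$ of $G-S$.

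It remains to bound $|V(K)|$ from below and compare it to $\epsilon qr$. Counting carefully so that each untouched row contributes $r-1$ fresh vertices (not $r$) after being glued to $C$, the component $K$ has size at least $q+(q-r+1)(r-1)$. This is the only place the hypothesis enters: rewriting $q\geq r/(1-\epsilon)$ in the equivalent form $q-r\geq \epsilon q$ reduces the desired inequality $q+(q-r+1)(r-1)>\epsilon qr$ to the trivial statement $q(1-\epsilon)>0$, yielding the contradiction.

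There is no real obstacle in the argument; the only points that require care are avoiding double-counting when $C$ is attached to the untouched rows, and choosing the rearrangement of the hypothesis $q(1-\epsilon)\geq r$ that matches the target inequality cleanly. The proof is deliberately crude—it does not attempt to exploit additional components or rows that are only partially hit by $S$—in keeping with the authors' remark that the lemma is non-optimal but suffices for the downstream application.
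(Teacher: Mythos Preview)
Your argument is correct and is essentially identical to the paper's: both find one full line (row/column) missed by $S$ and at least $q-r+1$ transversal lines missed by $S$, observe their union is connected, and count. The paper swaps the names ``row'' and ``column'' relative to your convention and uses the slightly cruder lower bound $(q-r+1)r$ rather than your $q+(q-r+1)(r-1)$, but the structure and the verification $(q-r+1)r>\epsilon qr\iff q(1-\epsilon)>r-1$ are the same.
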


\begin{proof}
Let $S$ be a set of at most $r-1$ vertices in the $q\times r$ grid graph. Some row $R$ avoids $S$, and at least $q-r+1$ columns avoid $S$. The union of these columns with $R$ induces a connected subgraph with at least $(q-r+1)r>\epsilon qr$ vertices. Thus $S$ is not an $\epsilon$-separator. 
\end{proof}

\begin{lemma}
\label{ThreeDimGrid}
For  $p\geq q\geq (\frac{1}{1-\epsilon})r$, every $\epsilon$-separator of the $p\times q \times r$ grid graph has size at least $(\frac{1-\epsilon}{1+\epsilon})qr$.
\end{lemma}

\begin{proof}
Let $G$ be the $p\times q \times r$ grid graph. Let $n:=|V(G)|=pqr$. 
Let $S$ be an $\epsilon$-separator of $G$. 
Let $A_1,\dots,A_c$ be the components of $G-S$. 
Thus $|A_i|\leq\epsilon n$. 
For $x\in[p]$, let $G_x :=\{(x,y,z):y\in[q],z\in[r]\}$ called a \emph{slice}. 
Say $G_x$ \emph{belongs} to $A_i$ and $A_i$ \emph{owns} $G_x$ if $|A_i\cap G_x| \geq \frac{1+\epsilon}{2} qr$.
Clearly, no two components own the same slice. 
First suppose that at least two components each own a slice. That is, $G_v$ belongs to $A_i$ and $G_w$ belongs to $A_j$ for some $v<w$ and $i\neq j$. 
Let $X:=\{(y,z):(v,y,z)\in G_v,\,(w,y,z)\in G_w\}$. Then $|X|\geq 2(\frac{1+\epsilon}{2})qr-qr=\epsilon qr$. 
For each $(y,z)\in X$, the `straight' path $(v,y,z), (v+1,y,z),\dots,(w,y,z)$ contains some vertex in $S$. 
Since these paths are pairwise disjoint, $|S|\geq|X|\geq\epsilon qr\geq \frac{1-\epsilon}{1+\epsilon}qr$ (since $\epsilon>\frac12$). 
Now assume that at most one component, say $A_1$, owns a slice. 
Say $A_1$ owns $t$ slices. Thus $t(\frac{1+\epsilon}{2})qr \leq |A_i|\leq \epsilon pqr$ and $t\leq \frac{2\epsilon}{1+\epsilon}p$. 
Hence, at least $(1-\frac{2\epsilon}{1+\epsilon})p$ slices belong to no component. 
For such a slice $G_v$, each component of $G_v-S$ is contained in some $A_i$ and thus has at most $(\frac{1+\epsilon}{2})qr$ vertices. 
That is, $S\cap G_v$ is a $(\frac{1+\epsilon}{2})$-separator of the $q\times r$ grid graph induced by $G_v$. 
 By \cref{TwoDimGrid}, $|S\cap G_v|\geq r$. Thus $|S|\geq (1-\frac{2\epsilon}{1+\epsilon})pr\geq (\frac{1-\epsilon}{1+\epsilon})qr$.
\end{proof}

Note that \cref{RS} and \cref{ThreeDimGrid} imply:

\begin{corollary}
\label{TreewidthThreeDimGrid}
For  $p\geq q\geq 2r$, the $p\times q \times r$ grid graph has treewidth at least $\frac{1}{3}qr$.
\end{corollary}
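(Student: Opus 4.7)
The plan is to specialise \cref{ThreeDimGrid} at $\epsilon=\tfrac{1}{2}$ and then pass from separator size to treewidth via \cref{RS}.

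With $\epsilon=\tfrac{1}{2}$ the hypothesis of \cref{ThreeDimGrid}, namely $p\geq q\geq\tfrac{1}{1-\epsilon}r$, reduces to exactly $p\geq q\geq 2r$, which is the assumption of the corollary. The promised separator lower bound $\tfrac{1-\epsilon}{1+\epsilon}qr$ becomes $\tfrac{1/2}{3/2}\,qr=\tfrac{1}{3}qr$. Consequently every $\tfrac{1}{2}$-separator of the $p\times q\times r$ grid graph $G$ contains at least $\tfrac{1}{3}qr$ vertices.

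Now I would apply the contrapositive of \cref{RS}: if $\tw(G)=k$, then $G$ admits a $\tfrac{1}{2}$-separator of size at most $k+1$. Combined with the previous step this forces $k+1\geq \tfrac{1}{3}qr$, so $\tw(G)\geq\tfrac{1}{3}qr-1$. The corollary as stated drops the additive~$1$; this gap is cosmetic and is absorbed by the asymptotic use of the bound later in \cref{gkPlanarTreewidthLowerBound}, where only an $\Omega(qr)$ treewidth lower bound is needed. (If one wants an exact equality, one can instead choose $\epsilon$ slightly below $\tfrac{1}{2}$: the hypothesis $q\geq\tfrac{1}{1-\epsilon}r$ then still follows from $q\geq 2r$ for $\epsilon$ close enough to $\tfrac{1}{2}$, and the separator bound $\tfrac{1-\epsilon}{1+\epsilon}qr$ strictly exceeds $\tfrac{1}{3}qr$, absorbing the $-1$ once $qr$ is sufficiently large.)

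There is no substantive obstacle: the whole argument is a direct specialisation of the two named lemmas, and the only minor point requiring attention is the integer off-by-one noted above.
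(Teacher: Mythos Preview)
Your proposal is correct and matches the paper's approach exactly: the paper simply states that \cref{RS} and \cref{ThreeDimGrid} together imply the corollary, which is precisely the specialisation you carry out. Your observation about the additive~$1$ is also on point---indeed, in the proof of \cref{gkPlanarTreewidthLowerBound} the paper itself uses the bound in the form $\tw(G)\geq\tfrac{1}{3}qr-1$.
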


This lower bound is within a constant factor of optimal, since \citet{OS11} proved that the $p\times q \times r$ grid graph has pathwidth, and thus treewidth, at most $qr$.

\begin{proof}[Proof of \cref{gkPlanarTreewidthLowerBound}] 
Let $r:= k+1$. 

First suppose that $g\leq 19$. Let $G$ be the $q\times q\times r$ grid graph where $q\geq 2r$. As observed above, $G$ is $k$-planar and thus $(g,k)$-planar. \cref{ThreeDimGrid} implies that every $\frac12$-separator of $G$ has size at least $\frac13 qr$. \cref{RS} thus implies that $G$ has treewidth at least $\frac13 qr-1$, which is $\Omega(\sqrt{(g+1)(k+1)n})$, as desired. 

Now assume that $g\geq 20$. By \cref{Expander} there is a 4-regular expander $H$ on $m:=\floor{\frac{g}{4}}\geq 5$ vertices. Thus $H$ has $2m$ edges, $H$ embeds in the orientable surface with $2m$ handles, and thus has Euler genus at most $4m\leq g$. We may assume that $q:=\sqrt{n/rm}$ is an integer with $q\geq 8r$. Let $G$ be obtained from $H$ by replacing each vertex $v$ of $H$ by a copy of the $q\times q\times r$ grid graph with vertex set $D_v$, and replacing each edge $vw$ of $H$ by a matching of $qr$ edges, so that $G[D_v\cup D_w]$ is a $2q\times q\times r$ grid, as shown in \cref{BlowUpGrid}. Thus $G$ is $(g,k)$-planar with $q^2rm=n$ vertices. 

\begin{figure}[t]
\centering
\includegraphics[width=0.65\textwidth]{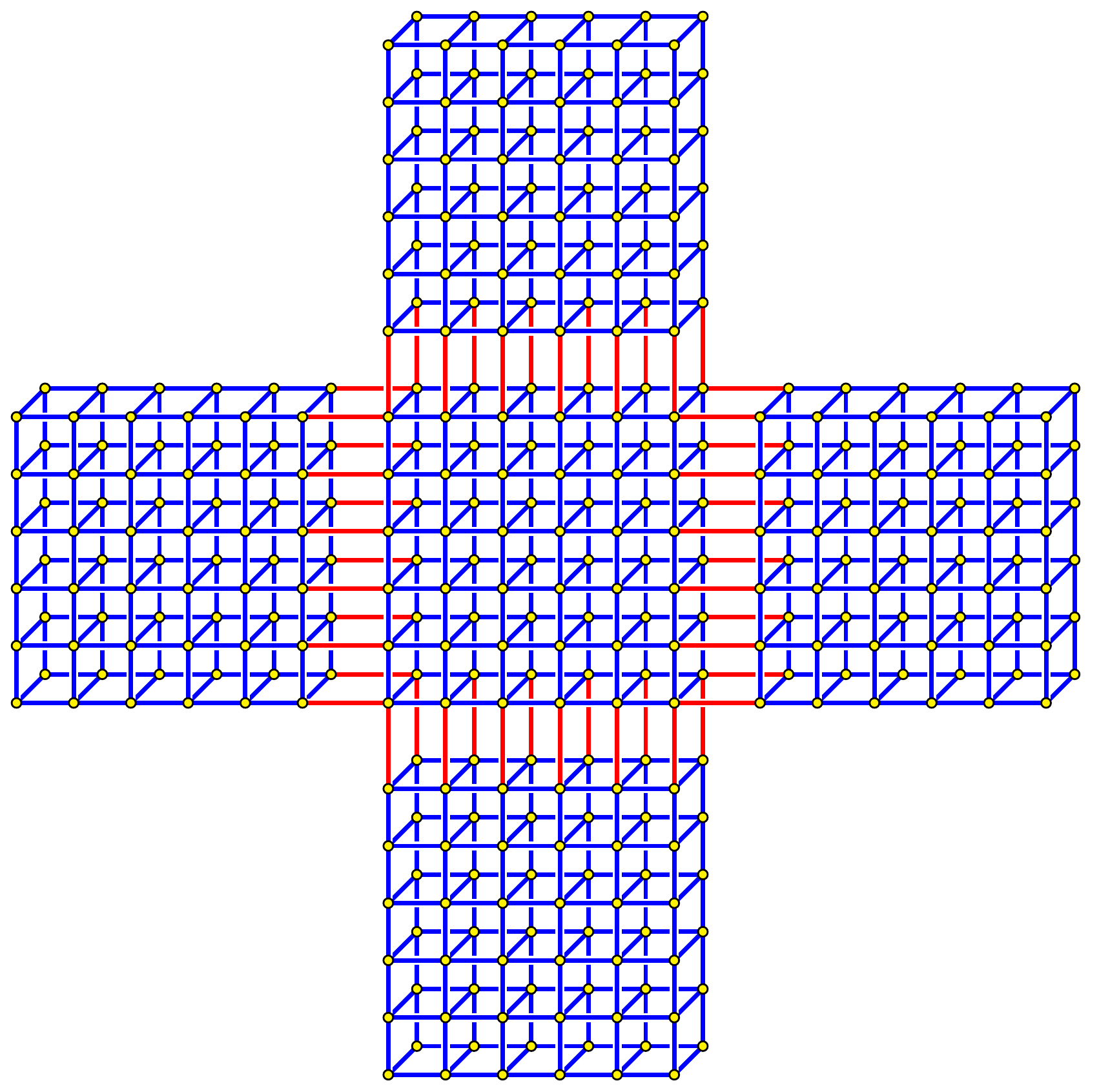}
\caption{Construction of $G$ in the proof of \cref{gkPlanarTreewidthLowerBound}.}
\label{BlowUpGrid}
\end{figure}

Let $S$ be a $\frac12$-separator in $G$. Let $A_1,\dots,A_c$ be the components of $G-S$. Thus $|A_i|\leq\frac{1}{2}n$ for $i\in[c]$. Initialise sets $S':=A'_1:=\dots:=A'_c:=\emptyset$. 

For each vertex $v$ of $H$, if $|S\cap D_v|\geq\frac{qr}{14}$ then put $v\in S'$. 
Otherwise, $|S\cap D_v| < \frac{qr}{14}$. 
Note that \cref{ThreeDimGrid} is applicable with $\epsilon=\frac{13}{15}$ since $q\geq 8r>\frac{1}{1-13/15}r$ and $\frac{1-13/15}{1+13/15}=\frac{1}{14}$. \cref{ThreeDimGrid}  thus implies that  $S\cap D_v$ is not a $\frac{13}{15}$-separator. Hence some component of $D_v-S$ has at least $\frac{13}{15}q^2r$ vertices. 
Since $\frac{13}{15}>\frac{1}{2}$, exactly one component of $D_v-S$ has at least $\frac{13}{15}q^2r$ vertices. 
This component is a subgraph of $A_i$ for some $i\in[c]$; add $v$ to $A'_i$. Thus $S',A'_1.\dots,A'_c$ is a partition of $V(H)$. 

We now prove that $S'$ is a $\frac{15}{26}$-separator in $H$. Suppose that $v\in A'_i$ and $w\in A'_j$ for some edge $vw$ of $H$. Let $D$ be the vertex set of the $2q\times q\times r$ grid graph induced by $D_v\cup D_w$.  Since $v\not\in S'$ and $w\not\in S'$, we have $|S\cap D_v|<\frac{qr}{14}$ and $|S\cap D_w|<\frac{qr}{14}$. Thus $|S\cap D| <\frac{qr}{7}$. Note that \cref{ThreeDimGrid} is applicable with $\epsilon=\frac{3}{4}$ since $q\geq 8r>\frac{1}{1-3/4}r$ and $\frac{1-3/4}{1+3/4}=\frac{1}{7}$. \cref{ThreeDimGrid}  thus implies that  $S\cap D$ is not a $\frac{3}{4}$-separator of $G[D]$. 
Hence some component $X$ of $G[D]-S$ contains at least $\frac{3}{4}|D|=\frac{3}{2}q^2r$ vertices. Each of $D_v$ and $D_w$ can contain at most $q^2r$ vertices in $X$. Thus $D_v$ and $D_w$ each contain at least $\frac{1}{2}q^2r$ vertices in $X$. Thus, by construction, $v$ and $w$ are in the same $A'_i$. That is, there is no edge of $H$ between distinct $A'_i$ and $A'_j$, and each component of $H-S'$ is contained in some $A'_i$. For each $i\in[c]$, we have $\frac{1}{2}q^2rm\geq|A_i|\geq \frac{13}{15}q^2r|A'_i|$ implying $|A'_i| \leq \frac{15}{26}m$. Therefore $S'$ is a $\frac{15}{26}$-separator in $H$. 

By \cref{Expander}, $|S'|\geq\beta m$ for some constant $\beta>0$. Thus $|S|\geq \frac{qr}{14}|S'|\geq\frac{\beta}{14}mqr$. By \cref{RS}, $G$ has treewidth at least $\frac{\beta}{14}mqr-1=\frac{\beta}{14}\sqrt{mrn}-1\geq \Omega(\sqrt{g(k+1)n})$, as desired. 

Finally, by \cref{Norine}, if $G$ has layered treewidth $\ell$ then $\Omega(\sqrt{g(k+1)n}) \leq \tw(G) \leq 2\sqrt{\ell n}$, implying $\ell \geq \Omega((g+1)(k+1))$. 
\end{proof}

Note that the proof of \cref{gkPlanarTreewidthLowerBound} in the case $k=0$ is very similar to that of \citet{GHT-JAlg84}. 

For $gk\geq n$ the trivial upper bound of $\tw(G)\leq n$ is better than that given in \cref{gkPlanarTreewidthUpperBound}. We conclude that the maximum treewidth of $(g,k)$-planar $n$-vertex graphs is $\Theta(\min\{n,\sqrt{(g+1)(k+1)n}\})$ for arbitrary $g,k,n$. This completes the proof of \cref{gkPlanarTreewidth}.

\section{Drawings with Few Crossings per Edge}

This section studies the following natural conjecture: for every  surface $\Sigma$ of Euler genus $g$, every graph $G$ with $m$ edges has a drawing in $\Sigma$ with $O(\frac{m}{g+1})$ crossings per edge. This conjecture is trivial at both extremes: with $g=0$, every  graph has a straight-line drawing in the plane (and therefore a drawing in the sphere) with at most $m$ crossings per edge, and with $g=2m$, every graph has a crossing-free drawing in the orientable surface with one handle per edge. Moreover, if this conjecture is true, it would provide a simple proof of \cref{gkPlanarTreewidthLowerBound} in the same manner as the proof of \cref{kPlanarTreewidthLowerBound}. 

Our starting point is the following well-known result of \citet[Theorem~22, p.~822]{LR99}:

\begin{theorem}[\citep{LR99}]
\label{thm:leighton-rao}
Let $G$ be a graph with bounded degree and $n$ vertices, mapped one-to-one onto the vertices of an expander graph $H$. Then the edges of $G$ can be mapped onto paths in $H$ so that each path has length $O(\log n)$ and each edge of $H$ is used by $O(\log n)$ paths.
\end{theorem}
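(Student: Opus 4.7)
The plan is to reduce the routing problem to a fractional multicommodity flow on $H$, invoke approximate flow--cut duality to bound congestion via expansion, exploit the logarithmic diameter of $H$ to guarantee short paths, and finally convert the fractional solution into integral paths by randomized rounding.

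First I would set up the demand. Since $G$ has bounded degree, it has $O(n)$ edges, giving $O(n)$ demand pairs each of unit value on the common vertex set $V(G)=V(H)$. Because $H$ is a bounded-degree expander, every cut $(S,\bar S)$ of $H$ satisfies $|E_H(S,\bar S)|=\Omega(\min\{|S|,|\bar S|\})$; equivalently, the sparsest-cut ratio of $H$ is $\Omega(1)$. The approximate max-flow min-cut theorem for multicommodity flow in undirected graphs (whose flow--cut gap is $O(\log n)$) then yields a fractional routing of these demands in $H$ with congestion $O(\log n)$ on every edge.

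Next I would control path length. Bounded-degree expanders have diameter $O(\log n)$, so every demand pair already has a candidate route of logarithmic length. The cleanest way to make this quantitative is via random walks: for each demand $(u,v)$, construct the $u$-to-$v$ flow from $T$-step walks with $T=\Theta(\log n)$, using the fact that the $T$-step distribution from any vertex is close to uniform by the spectral gap of $H$ and can then be ``reversed'' to terminate at $v$. This yields a fractional flow whose supporting paths all have length $O(\log n)$ and whose per-edge load is controlled directly by the walk's transition probabilities, matching the $O(\log n)$ congestion bound of the previous step. Finally, I would round: decompose the $u$-to-$v$ flow into a convex combination of $u$-$v$ paths of length $O(\log n)$, and for each demand independently select one such path with probability equal to its weight. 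A standard Chernoff bound (randomized rounding in the style of Raghavan and Thompson) shows that, with positive probability, each edge of $H$ is used by $O(\log n)$ of the chosen paths, while every chosen path already has length $O(\log n)$ by construction.

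The main obstacle is the simultaneous control of length and congestion at the fractional stage: invoking flow--cut duality as a black box gives $O(\log n)$ congestion but no length guarantee, while ad hoc length-constrained routing can inflate congestion. The random-walk construction circumvents this by tying both parameters to the same quantity---the mixing time $\Theta(\log n)$ of the expander---from the outset, after which rounding is a textbook step.
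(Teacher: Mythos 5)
The paper does not prove this statement: it is quoted verbatim as Theorem~22 of Leighton and Rao \citep{LR99} and used as a black box, so there is no in-paper argument to compare yours against. Judged on its own terms, your sketch follows a standard and essentially sound route (Valiant-style random-walk routing plus Raghavan--Thompson rounding), and the key quantitative step is right: since $G$ has bounded degree, each vertex originates $O(1)$ demands, and for a bounded-degree (near-regular) expander the transition matrix is (essentially) doubly stochastic, so the column sums give expected load $O(1)$ per edge per walk step, hence $O(\log n)$ over $T=\Theta(\log n)$ steps; a Chernoff bound with mean $\Theta(\log n)$ and a union bound over the $O(n)$ edges of $H$ then yields an integral routing with congestion $O(\log n)$, and every path has length $O(\log n)$ by construction. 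Your opening appeal to the $O(\log n)$ flow--cut gap is redundant, as you yourself note, since it gives no length control and is entirely superseded by the walk construction.

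The one step you gloss over is how the forward walk from $u$ and the reversed walk into $v$ are actually concatenated into a single $u$--$v$ flow. After $T$ steps each walk reaches a distribution that is merely \emph{close} to uniform (or to the stationary distribution), and these two intermediate distributions must be coupled at the midpoint; the uncoupled residual mass, though only $n^{-\Omega(1)}$ per demand, still has to be routed somehow (e.g.\ along shortest paths, which is harmless since the total residual flow times the $O(\log n)$ diameter is $o(1)$ per edge). Without saying this, ``reversed to terminate at $v$'' does not by itself produce a flow whose paths all start at $u$ and end at $v$. This is a fixable omission rather than a wrong turn, but it is the point where a written-out proof would need a genuine argument.
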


It is straightforward to extend this result to regular graphs $G$ of unbounded degree, with the number of paths per edge of $H$ increasing in proportion to the degree. However, there are two difficulties with using it in our application. First, it does not directly handle graphs in which there is considerable variation in degree from vertex to vertex: in such cases we would want the number of paths per edge to be controlled by the average degree in~$G$, but instead it is controlled by the maximum degree. And second, it does not allow us to control separately the sizes of $G$ and $H$; instead, both must have the same number of vertices. To handle these issues, we do  not map the vertices of our input graph $G$ directly to the vertices of an expander $H$; instead, we keep the vertices of $G$ and the vertices of $H$ disjoint from each other, connecting them by a bipartite graph that balances the degrees, according to the following lemma.

\begin{lemma}
\label{lem:load-balance}
Let $d_1,d_2,\dots,d_n$ be a sequence of positive integers, and let $q$ be a positive integer. Then there exists a bipartite graph with colour classes $\{v_1,\dots,v_n\}$ and $\{w_1,\dots,w_q\}$, at most $n+q-1$ edges, and a labelling of the edges with positive integers, such that
\begin{itemize}
\item each vertex $v_i$ is incident to a set of edges whose labels sum to $d_i$, and
\item each pair of distinct vertices $w_i$ and $w_j$ are incident to sets of edges whose label sums differ by at most $1$.
\end{itemize}
\end{lemma}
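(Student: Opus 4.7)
The plan is to use a standard interval-overlap construction. Let $D := \sum_{i=1}^n d_i$, and write $D = aq + b$ with $0 \leq b < q$. Choose target weights $c_1,\ldots,c_q$ so that $b$ of them equal $a+1$ and the remaining $q-b$ equal $a$; then $\sum_j c_j = D$ and any two $c_j$'s differ by at most $1$.

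Partition the segment $[0,D]$ of the real line in two ways. The \emph{$v$-partition} consists of the intervals $I_i := \bigl[\sum_{k<i} d_k,\ \sum_{k\leq i} d_k\bigr]$ of length $d_i$, where $I_i$ is associated to $v_i$. The \emph{$w$-partition} consists of the intervals $J_j := \bigl[\sum_{k<j} c_k,\ \sum_{k\leq j} c_k\bigr]$ of length $c_j$ (possibly zero when $D<q$), where $J_j$ is associated to $w_j$. Construct the bipartite graph by, for every pair $(i,j)$ with $|I_i \cap J_j|>0$, adding the edge $v_iw_j$ with label the positive integer $|I_i \cap J_j|$ (positive integrality holds because all breakpoints $\sum_{k\leq i} d_k$ and $\sum_{k\leq j} c_k$ are integers).

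The two required label-sum conditions are then immediate: each $v_i$ has label sum $\sum_j |I_i \cap J_j| = |I_i| = d_i$, and each $w_j$ has label sum $|J_j| = c_j$, so two such sums differ by at most $1$ (vertices $w_j$ with $c_j = 0$ have sum $0$, still within $1$ of the others). For the edge count, the $v$-partition introduces $n-1$ interior breakpoints and the $w$-partition introduces $q-1$, so the common refinement of the two partitions consists of at most $(n-1)+(q-1)+1 = n+q-1$ subintervals; each subinterval of positive length lies in a unique $I_i$ and a unique $J_j$, so it contributes exactly one edge, giving at most $n+q-1$ edges in total.

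There is no real obstacle here; the only points that deserve a little care are (i) the degenerate case $D<q$, in which some $c_j=0$ and the corresponding $w_j$ is isolated (still consistent with the ``differ by at most $1$'' condition since then all nonzero $c_j$ equal $1$), and (ii) observing that coinciding breakpoints of the two partitions only \emph{decrease} the number of subintervals in the common refinement, so the bound $n+q-1$ persists in every case.
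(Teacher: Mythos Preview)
Your proof is correct and is essentially the same construction as the paper's: the paper describes a greedy sweep that, at each step, adds an edge between the least-index incomplete $v_i$ and $w_j$ with label $\min(\text{remaining }d_i,\text{remaining }c_j)$, which is exactly your interval-overlap construction traversed left to right, and the paper's ``each step completes at least one vertex'' count is the same as your breakpoint count. The only difference is presentation; your interval phrasing is arguably a bit cleaner for seeing the $n+q-1$ edge bound at a glance.
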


\begin{proof}
Preassign label sums of $\lfloor \sum d_i/q \rfloor$ or $\lceil \sum d_i/q \rceil$ to each vertex $w_i$ so that the resulting values sum to $\sum d_i$. We will construct a bipartite graph and a labelling whose sums match the numbers $d_1,\dots,d_n$ on one side of the bipartition and whose sums match the preassigned numbers on the other side.

Build this graph and its labelling one edge at a time, starting from a graph with no edges. At each step, let $v_i$ and $w_j$ be the vertices on each side of the bipartition with the smallest indices whose edge labels do not yet sum to the required values, add an edge from $v_i$ to $w_j$, and label this edge with the largest integer that does not exceed the required sum on either vertex.

Each step completes the sum for at least one vertex. Because the required values on the two sides of the bipartition both sum to $\sum d_i$, the final step completes the sum for two vertices, $v_n$ and $w_q$. Therefore, the total number of steps, and the total number of edges added to the graph, is at most $n+q-1$.
\end{proof}

\begin{figure}[t]
\centering
\centering\includegraphics[width=0.9\textwidth]{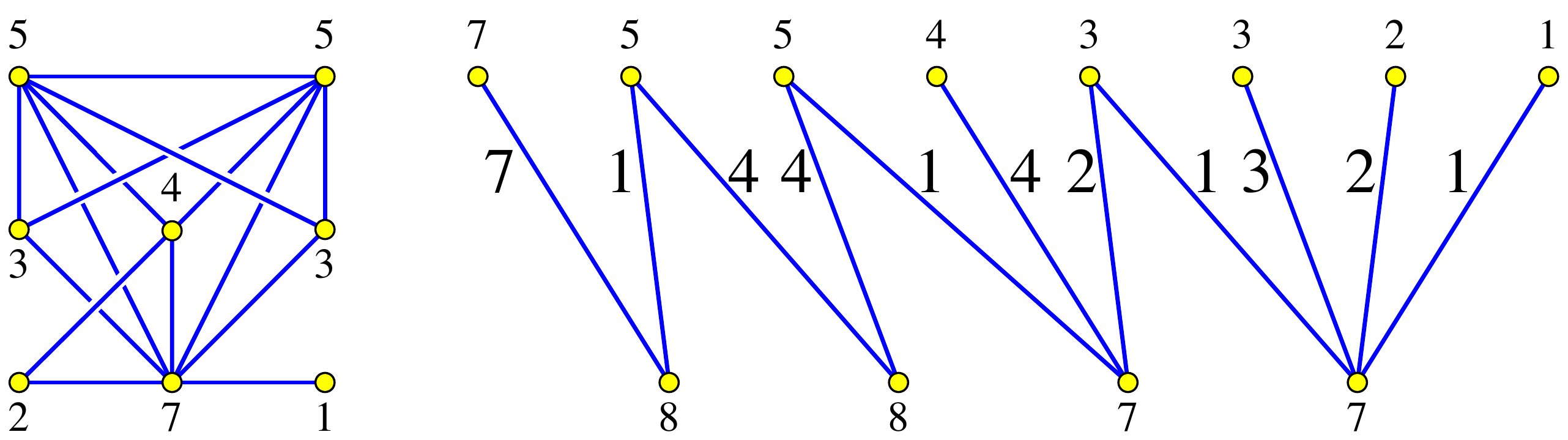}
\caption{A graph (left) with degree sequence $7,5,5,4,3,3,2,1$ and a bipartite graph (right) formed from this degree sequence by Lemma~\ref{lem:load-balance}. The large numbers are the edge labels of the lemma, and the small numbers along the top and bottom of the bipartite graph give the sums of incident edge labels at each vertex. The top sums match the given degree sequence, while the bottom sums all differ by at most 1.}
\label{fig:degseq-loadbal}
\end{figure}

By combining this load-balancing step with the Leighton-Rao expander-routing scheme, we may obtain a more versatile mapping of our given graph $G$ to a host graph $H$, with better control over the genus of the surface we obtain from $H$. This genus will be determined by the \emph{cyclomatic number} of $H$, where the cyclomatic number of a graph with $n$ vertices and $m$ edges is $m-n+1$. This number is the dimension of the cycle space of the graph, and the first Betti number of the topological space obtained from the graph by replacing each edge by a line segment.

\begin{lemma}
\label{lem:hosted}
Let $G$ be an arbitrary graph, with $m$ edges, and let $Q$ be a $q$-vertex bounded-degree expander graph.
Then there exists a host graph $H$, a one-to-one mapping of the vertices of $G$ to a subset of vertices of $H$, and a mapping of the edges of $G$ to paths in $H$, with the following properties:
\begin{itemize}
\item The vertices of $H$ that are not images of vertices in $G$ induce a subgraph isomorphic to~$Q$.
\item The image of an edge $e$ in $G$ forms a path of length $O(\log q)$ that starts and ends at the image of the endpoints of $e$, and  passes through the image of no other vertex of $G$.
\item Each vertex of $H$ that is not an image of a vertex in $G$ is crossed by $O((m\log q)/q)$ paths.
\item The cyclomatic number of $H$ is $O(q)$.
\end{itemize}
\end{lemma}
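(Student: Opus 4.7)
The plan is to build $H$ from the given expander $Q$ by attaching $V(G)$ through a ``load-balancing'' bipartite interface (supplied by \cref{lem:load-balance}) and then to route each edge of $G$ as a short path that briefly exits into this interface and spends the rest of its length inside $Q$. Concretely, I would first apply \cref{lem:load-balance} to the degree sequence $d_i := \deg_G(v_i)$ with parameter $q$, obtaining a labelled bipartite graph $B$ between $V(G) = \{v_1,\ldots,v_n\}$ and a fresh vertex set $\{w_1,\ldots,w_q\}$ with at most $n+q-1$ edges, labels summing to $d_i$ at each $v_i$, and labels summing to within $1$ of $2m/q$ at each $w_j$. The host graph $H$ is defined as the disjoint union of $V(G)$ and $Q$ (identifying $V(Q)$ with $\{w_1,\ldots,w_q\}$) together with the edges of $B$ (ignoring labels). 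With this definition the non-$G$ vertices of $H$ induce exactly $Q$, and $|E(H)| \leq (n+q-1)+O(q)$ while $|V(H)| = n+q$, so the cyclomatic number is $O(q)$: two of the four required properties are thus immediate.

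To route the edges of $G$, I would use the labels of $B$ to partition the $d_i$ edge-incidences at each $v_i$ among the $B$-neighbours of $v_i$, sending exactly $\ell$ edge-incidences across the edge $v_iw_j$ whenever it carries label $\ell$. This specifies, for every edge $e = v_av_b$ of $G$, an ordered pair of anchors $(w_{j(e)}, w_{j'(e)})$ in $V(Q)$, and produces a multi-commodity demand on $V(Q)$ with $m$ unit requests in which every $w_j$ is incident to at most $\lceil 2m/q \rceil$ demands. Applying (an extension to unbounded-degree demands of) \cref{thm:leighton-rao} then yields paths $P_e$ in $Q$ of length $O(\log q)$ and edge congestion $O((m\log q)/q)$. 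The image of $e$ in $H$ is the concatenation $v_a,\,w_{j(e)},\,P_e,\,w_{j'(e)},\,v_b$ of total length $O(\log q)$, whose interior lies entirely in $V(Q)$, so it meets no other image of a vertex of $G$. Each $w_j$ is crossed by at most $\lceil 2m/q \rceil$ paths as their bipartite endpoint, plus at most $\deg_Q(w_j)\cdot O((m\log q)/q) = O((m\log q)/q)$ paths transiting through $Q$, totalling $O((m\log q)/q)$ as required.

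The main obstacle is that \cref{thm:leighton-rao} is stated for a bounded-degree demand graph mapped one-to-one onto $V(H)$, whereas my demand multigraph on $\{w_1,\ldots,w_q\}$ has $m$ edges and maximum degree $\Theta(m/q)$. I would handle this by decomposing the demand into $O(m/q)$ matchings (using, for example, the bound $\chi'(G)\leq \Delta(G)+\mu(G)$ for multigraphs) and applying \cref{thm:leighton-rao} separately to each matching; the edge congestions then add to $O((m\log q)/q)$. Alternatively, one can appeal directly to the multi-commodity flow argument that underlies \cref{thm:leighton-rao}, for which the bounded-degree hypothesis is only a convenient normalisation. Once this routing is in hand, everything else is bookkeeping.
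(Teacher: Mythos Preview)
Your proposal is correct and follows essentially the same approach as the paper: build $H$ by attaching $V(G)$ to $Q$ via the load-balancing bipartite graph of \cref{lem:load-balance}, use the labels to assign each edge of $G$ a pair of anchors in $V(Q)$, and route inside $Q$ via \cref{thm:leighton-rao} after partitioning the demand multigraph into $O(m/q)$ bounded-degree pieces. The paper's proof is the same in structure and detail, differing only cosmetically (it speaks of partitioning into ``$O(m/q)$ bounded-degree graphs'' rather than matchings, and tallies vertex rather than edge congestion directly).
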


\begin{proof}
Let the vertices of $G$ be $u_1,\dots,u_n$. 
Apply \cref{lem:load-balance} to the degree sequence of $G$ to form a bipartite graph~$H$ with bipartition $\{v_1,\dots,v_n\},\{w_1,\dots,w_q\}$. Then add edges between pairs of vertices $(w_i,w_j)$ so that $\{w_1,\dots,w_q\}$ induces a subgraph isomorphic to $Q$. In this way, each vertex $u_i$ in $G$ is mapped to a vertex $v_i$ in $H$ so that the mapping is one-to-one and the unmapped vertices form a copy of $Q$, as required. The cyclomatic number of $H$ equals the cyclomatic number of $Q$, plus $n+q-1$ (for the added edges in the bipartite graph), minus~$n$ (for the added vertices relative to $Q$). These two added and subtracted terms cancel, leaving the cyclomatic number of $Q$ plus $q-1$, which is $O(q)$ as required.

It remains to find paths in $H$ corresponding to the edges in $G$. Assign each edge $u_iu_j$ of $G$ to a pair of vertices $(w_{i'},w_{j'})$ adjacent to the images $v_i$ and $v_j$ in $H$, so that the number of edges of $G$ assigned to each edge between $\{v_1,\dots,v_n\}$ and $\{w_1,\dots,w_q\}$ equals the corresponding label. Complete each path by applying \cref{thm:leighton-rao} to the copy of $Q$; this gives paths of length $O(\log q)$ connecting each pair $(w_{i'},w_{j'})$ obtained in this way. These pairs do not form a bounded-degree graph, but they can be partitioned into $O(m/q)$ bounded-degree graphs, each of which causes each vertex in the copy of $Q$ to be crossed $O(\log q)$ times. Combining these suproblems, each vertex in the copy of $Q$ is crossed by a total of $O((m\log q)/q)$ paths, as required.
\end{proof}

We are now ready to prove the existence of embeddings with small local crossing number, on surfaces of arbitrary genus.

\begin{proof}[Proof of \cref{thm:few-crossings}] 
Given a graph $G$, to be embedded on a surface with at most $g$ handles and with few crossings per edge, choose $q$ so that the $O(q)$ bound on the cyclomatic number of the graph $H$ in \cref{lem:hosted} is at most $g$, and apply \cref{lem:hosted} to find a graph $H$ and a mapping from $G$ to $H$ obeying the conditions of the lemma.

To turn this mapping into the desired embedding of $G$, replace each vertex of degree $d$ in $H$ by a sphere, punctured by the removal of $d$ unit-radius disks, and form a surface (as a cell complex, not necessarily embedded into three-dimensional space) by replacing each edge $xy$ of $H$ by a unit-radius cylinder connecting boundaries of removed disks on the spheres for vertices $x$ and $y$. The number of handles on the resulting surface (shown in \cref{fig:ball-and-stick}) equals the cyclomatic number of~$H$, which is at most $g$. 

\begin{figure}[h]
\centering
\includegraphics[width=2.25in]{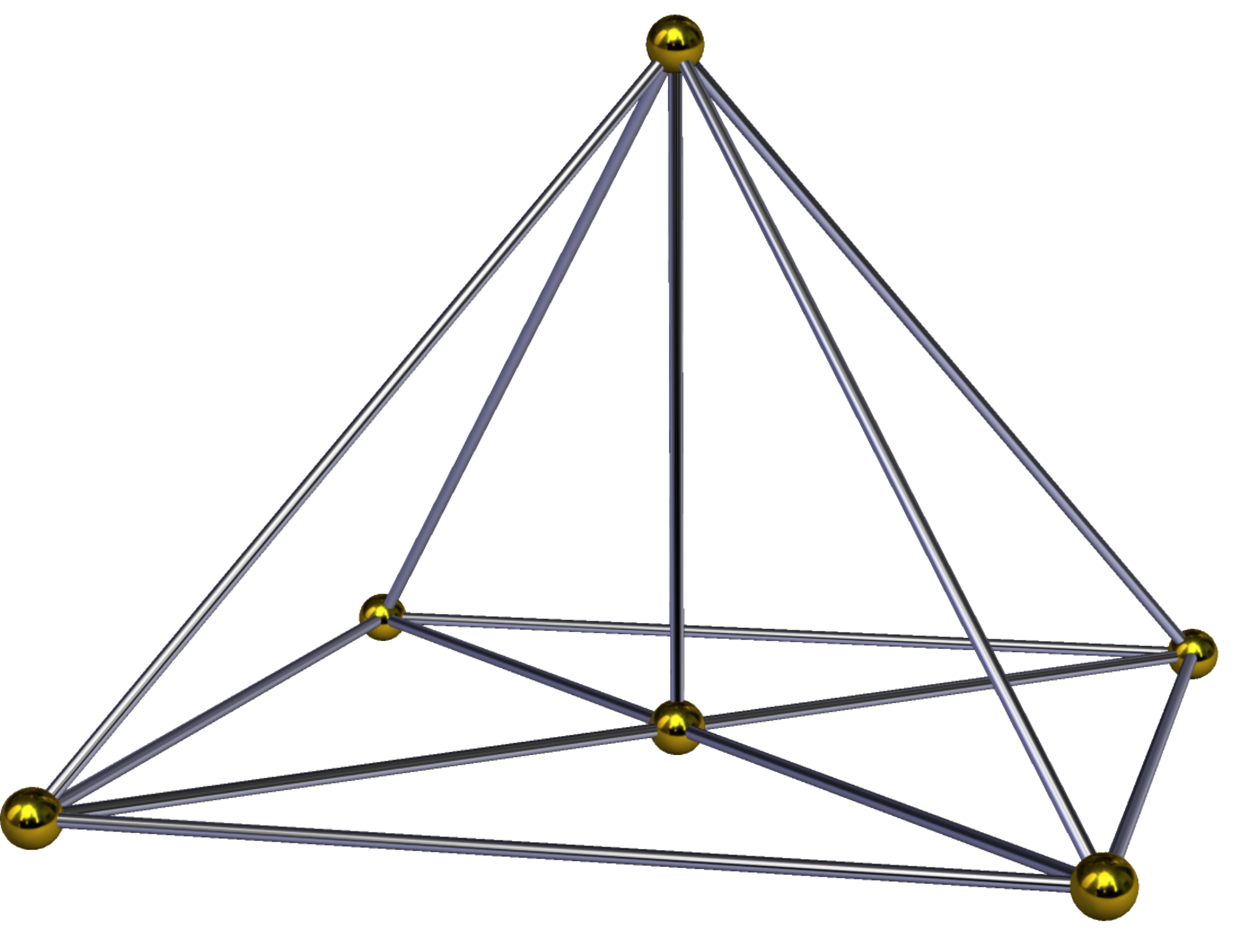}
\caption{A topological surface obtained by replacing each vertex of a graph by a punctured sphere, and each edge of the graph by a cylinder connecting two punctures. Image \href{https://commons.wikimedia.org/wiki/File:Square_pyramid_pyramid.png}{Square\_pyramid\_pyramid.png} by Tom Ruen on Wikimedia commons, made available under a Creative Commons CC-BY-SA 4.0 International license.}
\label{fig:ball-and-stick}
\end{figure}

Embed each vertex of $G$ as an arbitrarily chosen point on the sphere of the corresponding vertex of~$H$, and each edge of $G$ as a curve through the sequence of spheres and cylinders corresponding to its path in~$H$. Choose this embedding so that no intersection of edge curves occurs within any of the cylinders, and so that every pair of edges that are mapped to curves on the same sphere meet at most once, either at a crossing point or a shared endpoint.

Because the spheres that contain vertices of $G$ only contain curves incident to those vertices, they do not have any crossings. Each edge is mapped to a curve through $O(\log g)$ of the remaining spheres, and can cross at most $O((m\log g)/g)$ other curves within each such sphere. Therefore, the maximum number of crossings per edge is $O((m\log^2 g)/g)$.
\end{proof}

\section{Map Graphs}

The following characterisation of map graphs makes them easier to deal with (and is well known in the $g=0$ case \citep{CGP02}). Consider a bipartite graph $H$ with bipartition $\{A,B\}$. Define the \emph{half-square} graph $H^2[A]$ with vertex set $A$, where two vertices in $A$ are adjacent if and only if they have a common neighbour in $B$. 

\begin{lemma}
\label{MapGraphCharacterisation}
A graph $G$ is a $(g,d)$-map graph if and only if $G$ is isomorphic to $H^2[A]$ for some bipartite graph $H$ with Euler genus at most $g$ and bipartition $\{A,B\}$, where vertices in $B$ have maximum degree at most $d$. 
\end{lemma}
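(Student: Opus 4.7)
The plan is to prove both implications by direct construction, with the bipartite graph $H$ serving as the incidence graph between the nations of $G_0$ and the vertices of $G_0$. In each direction, the key geometric move is to pass between ``place a vertex inside each nation'' and ``inflate each vertex of $A$ into a nation-shaped face''.

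For the forward implication, suppose $G$ is the map graph of an embedded graph $G_0$ of Euler genus at most $g$ in which every vertex of $G_0$ lies on at most $d$ nations. Let $A := V(G)$ be the set of nations, $B := V(G_0)$, and put $ab\in E(H)$ iff $b$ is on the boundary of the nation $a$. Then $H$ is bipartite with parts $\{A,B\}$, and $\deg_H(b)\leq d$ for every $b\in B$. Two vertices of $G$ are adjacent iff the two nations share a boundary vertex iff they have a common neighbour in $H$, giving $G\cong H^2[A]$. To embed $H$ in the same surface, place each $a$ at an interior point of its nation (an open disk in any 2-cell embedding) and draw each incident edge $ab$ as an arc from $a$ to $b$ inside that nation; all such arcs emanate from the single point $a$ to distinct points on the face boundary, so they can be drawn pairwise disjointly.

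For the backward implication, take an embedding of $H$ in a surface $\Sigma$ of Euler genus at most $g$ and inflate each $a\in A$. Choose pairwise disjoint topological disks $D_a$ around the vertices of $A$, disjoint from the vertices of $B$, each meeting every incident edge $ab_i$ in a single point $p_{a,i}$. Delete $a$ and the portion of each $ab_i$ inside $D_a$, add arcs along $\partial D_a$ joining $p_{a,1},\dots,p_{a,k}$ in the rotational order inherited from $a$ (so that $\partial D_a$ becomes a $k$-cycle $C_a$ bounding the open disk $D_a$ as a new face), and then contract each remaining stub $p_{a,i}b_i$, identifying $p_{a,i}$ with $b_i$. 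A bookkeeping check of $V-E+F$ shows that the procedure preserves $\chi(\Sigma)$, so the resulting embedded graph $G_0$ has Euler genus at most $g$; by construction $V(G_0)=B$, each $b\in B$ is incident to exactly $\deg_H(b)\leq d$ nations, and two nations $a,a'$ share a boundary vertex iff they share a neighbour in $H$. Labelling the inflated faces as nations and all other faces as lakes therefore makes $G_0$ a witness that $H^2[A]\cong G$ is a $(g,d)$-map graph.

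The main obstacle is technical rather than conceptual: one must verify that the disk-inflation truly preserves the Euler genus (the $V-E+F$ count above) and that the nation boundaries behave correctly in the low-degree cases $\deg_H(a)\in\{0,1,2\}$, which yield respectively an isolated disk-face, a self-loop, and a pair of parallel edges on $C_a$. Both points are routine once the rotational system at $a$ is used, and the standard definition of a map graph permits the resulting multigraph features in $G_0$.
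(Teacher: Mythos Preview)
Your proposal is correct and takes essentially the same approach as the paper. The forward direction is identical (let $H$ be the nation--vertex incidence graph of $G_0$), and for the backward direction your disk-inflation-then-contraction is a topological rephrasing of the paper's purely combinatorial step: the paper sets $V(G_0)=B$ and adds an edge $uw$ whenever $vu,vw$ are consecutive in the rotation at some $v\in A$, which is exactly the multigraph your contracted boundary cycles $C_a$ produce, with the same nation/lake labelling.
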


\begin{proof}
$(\Longrightarrow$) Say $G$ is a $(g,d)$-map graph defined with respect to some graph $G_0$ embedded in a surface of Euler genus $g$, where each face of $G_0$ is a nation or a lake. Let $H$ be the bipartite graph with bipartition $\{A,B\}$, where  $A$ is the set of nations of $G_0$
and  $B:=V(G_0)$, where a vertex $v\in A$ is adjacent to a vertex $w\in B$ if $w$ is incident to the face in $G_0$ corresponding to $v$. Then $H$ embeds in the same surface as $G_0$, and by definition, $G$ is isomorphic to $H^2[A]$. The degree of a vertex $w$ in $B$ equals the number of nations incident to $w$ in $G_0$, which is at most $d$. 

$(\Longleftarrow$) Consider a bipartite graph $H$ with bipartition $\{A,B\}$, where vertices in $B$ have maximum degree at most $d$. From an embedding of $H$ in a surface of Euler genus $g$, construct an embedded graph $G_0$ with vertex set $V(G_0):=B$, where $uw\in E(G_0)$ whenever $vu$ and $vw$ are consecutive edges incident to some vertex $v\in A$ in the embedding of $H$. So, for each vertex $v\in A$, if $vw_1,vw_2,\dots,vw_p$ is the cyclic order of edges incident to $v$ in the embedding of $H$, then $(w_1,w_2,\dots,w_p)$ is a face of $G_0$, which we label as a nation. Label every other face of $G_0$ as a lake. Note that a lake occurs whenever, for some $k\geq 3$, there is a face $(v_1,w_1,v_2,w_2,\dots,v_k,w_k)$ of $H$ with $v_i\in A$ and $w_i\in B$. Then $(w_1,w_2,\dots,w_k)$ is a lake of $G_0$. By construction, 
the nations of $G_0$ are in 1--1 correspondence with vertices in $A$, and for each vertex $w$ of $G_0$, the number of nations incident to $w$ equals the degree of $w$ in $H$, which is at most $d$. Two nations are incident to a common vertex $w$ of $G_0$ if and only if the corresponding vertices in $A$ are both adjacent to $w$ in $H$. Thus $H^2[A]$ is isomorphic to the $(g,d)$-map graph associated with $G_0$. 
\end{proof}

\begin{lemma} 
\label{MapGraph}
Let $H$ be a bipartite graph with bipartition $\{A,B\}$ and layered treewidth $k$ with respect to some layering $A_1,B_1,A_2,B_2,\dots,A_t,B_t$, where $A=A_1\cup\dots\cup A_t$ and $B=B_1\cup \dots\cup B_t$. Then the half-square graph $G = H^2[A]$ has layered treewidth at most $k(2d+1)$ with respect to layering $A_1,A_2,\dots,A_t$, where $d$ is the maximum degree of vertices in $B$. 
\end{lemma}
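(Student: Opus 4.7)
The plan is to build a tree-decomposition of $G=H^2[A]$ by starting from a layered tree-decomposition of $H$ of layered width $k$ (with layering $A_1,B_1,A_2,B_2,\dots,A_t,B_t$) and then ``absorbing'' each $B$-vertex into the bags that contain it.

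Let $(B_x:x\in V(T))$ be a tree-decomposition of $H$ with layered width $k$ with respect to the given layering. For each $x\in V(T)$, define the new bag
\[
\widetilde{B}_x \;:=\; (B_x\cap A) \;\cup\; \bigcup_{b\in B_x\cap B} N_H(b).
\]
Note that $N_H(b)\subseteq A$ since $H$ is bipartite, so $\widetilde{B}_x\subseteq A=V(G)$. First I would check that $(\widetilde{B}_x:x\in V(T))$ is a tree-decomposition of $G$. For the edge condition: if $aa'\in E(G)$, then by definition of the half-square there is some $b\in B$ with $a,a'\in N_H(b)$; taking any bag $B_x$ of $H$ containing $b$, both $a$ and $a'$ lie in $\widetilde{B}_x$. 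For the subtree condition, fix $a\in A$ and observe
\[
\{x:a\in \widetilde{B}_x\} \;=\; \{x:a\in B_x\} \;\cup\; \bigcup_{b\in N_H(a)} \{x: b\in B_x\}.
\]
Each set on the right induces a subtree of $T$; moreover, for every $b\in N_H(a)$ the edge $ab$ of $H$ forces some bag of $H$ to contain both $a$ and $b$, so $\{x:a\in B_x\}$ meets every $\{x:b\in B_x\}$. Hence the union is connected, i.e.\ a subtree.

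Next I would verify the layered width with respect to the layering $A_1,\dots,A_t$ of $G$ (which is indeed a layering of $G$: if $aa'\in E(G)$ with $a\in A_i$, $a'\in A_j$, then a common neighbour $b\in B$ lies in $B_{i-1}\cup B_i$ and in $B_{j-1}\cup B_j$, forcing $|i-j|\leq 1$). Fix a bag $\widetilde{B}_x$ and a layer $A_i$. Then
\[
\widetilde{B}_x\cap A_i \;=\; (B_x\cap A_i) \;\cup\; \bigcup_{b\in B_x\cap(B_{i-1}\cup B_i)} \bigl(N_H(b)\cap A_i\bigr),
\]
using that a vertex $b\in B$ with a neighbour in $A_i$ must lie in $B_{i-1}$ or $B_i$ (since consecutive layers in $H$ are $A_{i-1},B_{i-1},A_i,B_i,A_{i+1}$). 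The first term has size at most $k$ by the layered width of $H$. For the union, $|B_x\cap B_{i-1}|\leq k$ and $|B_x\cap B_i|\leq k$, and each such $b$ contributes at most $d$ vertices. Hence
\[
|\widetilde{B}_x\cap A_i| \;\leq\; k + 2k\cdot d \;=\; k(2d+1),
\]
giving the desired layered width bound.

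The construction and the layer-counting are both routine once set up correctly; the only mildly delicate step is the subtree property, where one must exploit that the edge $ab\in E(H)$ provides the ``bridge'' ensuring the subtrees for $a$ and for each neighbour $b\in N_H(a)$ already overlap inside $T$.
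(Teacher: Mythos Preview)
Your proof is correct and is essentially the same as the paper's: both replace each $B$-vertex in every bag by its $H$-neighbourhood, verify the tree-decomposition axioms (with the subtree property established via the shared bag for each edge $ab\in E(H)$), check that $A_1,\dots,A_t$ is a layering of $G$, and bound $|\widetilde{B}_x\cap A_i|$ by $k+2kd$ using that any contributing $b$ lies in one of the two $B$-layers adjacent to $A_i$. Your write-up is in fact slightly more careful than the paper's on the subtree property and on identifying the two relevant $B$-layers.
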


\begin{proof} 
Let $T$ be the given tree decomposition of $H$. For each bag $X$ and for each vertex $w$ in $B \cap X$, replace $w$ in $X$ by $N_H(w)$ and delete $w$ from $X$. Each vertex $v$ in $A$ is now precisely in the bags that previously intersected $N_H(v) \cup \{v\}$. Since $N_H(v) \cup\{v\}$ induces a connected subgraph of $H$, the bags that now contain $v$ form a connected subtree of $T$. 

Consider an edge $uv \in E(G)$. Then $u,v \in N_H(w)$ for some $w \in B$. By construction, $u$ and $v$ are in a common bag, and we have a tree decomposition of $G$. Say $u\in A_i$ and $v\in A_j$ and $w\in B_\ell$. Since $uw\in E(H)$ and $A_1,B_1,A_2,B_2,\dots,A_t,B_t$ is a layering of $H$, we have $\ell\in\{i,i-1\}$. Similarly, $\ell\in\{j,j-1\}$. Thus $|i-j|\leq 1$. Hence $A_1,A_2,\dots,A_t$ is a layering of $G$. 

We now upper bound $|X \cap A_i|$ for each bag $X$ and layer $A_i$. If $v \in X \cap
A_i$, then (1) $v$ was in $X$ in the given tree decomposition of $H$, or (2)
$v$ is adjacent to some vertex $w$ in $( B_i \cup B_{i+1} ) \cap X$. 
Thus, the number of such vertices $w$ is at most $2k$. Each such vertex $w$
contributes at most $d$ vertices to $X$. The number of type-(1) vertices $v$
is at most $k$. Thus $|X \cap A_i|\leq k + 2kd$. 
\end{proof}

The next lemma is a minor technical strengthening of \cref{DMW}. We sketch the proof for completeness. 

\begin{lemma}
\label{StrongDMW}
Let $V_1,V_2,\dots,V_t$ be a bfs layering of a connected graph $G$ of Euler genus at most $g$. 
Then $G$ has a tree decomposition of layered width $2g+3$ with respect to $V_1,V_2,\dots,V_t$.
\end{lemma}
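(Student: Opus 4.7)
The plan is to follow the proof of \cref{DMW} essentially verbatim, observing that its construction can be arranged to produce bags compatible with any prescribed bfs layering. Let $r\in V_0$ be the bfs root, and fix a bfs spanning tree $T$ of $G$ rooted at $r$ whose level sets are exactly $V_0,V_1,\dots,V_t$ (such a $T$ exists because the $V_i$ are determined by distances from $r$). Fix an embedding of $G$ in a surface $\Sigma$ of Euler genus at most $g$, and add chord edges within faces (without increasing the genus) to obtain a triangulation $G^+$ of $\Sigma$; any such chord connects vertices whose layers differ by at most one, so $V_0,\dots,V_t$ remains a valid layering of $G^+$, and a tree decomposition of $G^+$ immediately yields one of $G$ of no larger layered width.

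For each triangular face $f=\{u,v,w\}$ of $G^+$, define the bag
\[ B_f \;=\; P_T(u)\cup P_T(v)\cup P_T(w)\cup X, \]
where $P_T(\cdot)$ denotes the root-to-vertex path in $T$, and $X\subseteq V(G)$ is a single fixed ``topological correction'' set of size at most $2g$ (for instance, the endpoints of $2g$ edges of $G^+$ whose deletion cuts $\Sigma$ into a disk, so that $T$ together with these edges spans the first homology of $\Sigma$). Connect the bags into a tree $\mathcal{T}$ by taking a spanning tree of the dual graph of $G^+$ in $\Sigma$.

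Verification splits into two parts. First, $(\mathcal{T},(B_f))$ is a tree decomposition of $G^+$: every edge is a boundary edge of some triangular face and so its endpoints both lie in the corresponding bag, and for each vertex $v\in V(G)$ the set of bags containing $v$ is connected in $\mathcal{T}$ because, once the surface has been cut along the edges recorded by $X$, the faces of $G^+$ incident to the subtree of $T$ below $v$ form a topological disk whose dual is connected. Second, the layered width is at most $2g+3$: because $T$ is a bfs tree realising the prescribed layering, each path $P_T(\cdot)$ contains exactly one vertex per layer $V_i$ up to the depth of its endpoint, so the three boundary paths contribute at most $3$ vertices per layer to any bag, while $X$ contributes at most $|X|\leq 2g$ per layer.

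The main technical obstacle is the first verification step in the presence of non-zero genus: one must confirm that the bags containing each fixed vertex really do form a connected subtree of $\mathcal{T}$, which can fail if a non-contractible loop in $\Sigma$ separates the faces ``above'' $v$ from those ``below''. This is exactly the role of the correction set $X$, and it is the nontrivial topological content already worked out in the proof of \cref{DMW}. Once this step is granted, the refinement sought here is essentially free: the layered-width bound depends only on the fact that each $P_T(\cdot)$ meets each $V_i$ in at most one vertex, which is automatic from the bfs property of $T$ and is therefore compatible with any prescribed bfs layering.
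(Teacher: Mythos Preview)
Your overall plan matches the paper's (and DMW13's) strategy: take a bfs tree $T$ rooted at $r$, triangulate, use a tree--cotree decomposition to obtain a dual spanning tree $T^*$ and a leftover edge set, and let each bag be the union of the three $T$-root-paths for the face's corners together with a fixed ``correction'' coming from the leftover edges. The crucial discrepancy is what that correction is.

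In the paper, for each leftover edge $ab$ (there are exactly $g$ of them, not $2g$) the correction added to \emph{every} bag is $P_a\cup P_b$, the union of the two full root-paths; altogether $2g$ paths, each meeting every bfs layer in at most one vertex, which is exactly how the bound $2g+3$ arises. You instead add only a fixed vertex set $X$ of size at most $2g$, namely the endpoints themselves. That weaker correction does not produce a tree decomposition.

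Here is the failure. Let $ab$ be a leftover edge with $a$ a leaf of $T$, and let $p$ be the parent of $a$ in $T$; assume $p$ is not among your $2g$ endpoints. With your bags, $p\in B_f$ exactly when the face $f$ has a corner in the subtree $T_p$. The two triangles $f_1,f_2$ bordering $ab$ both qualify (their corner $a$ lies in $T_p$), but since $ab$ is a leftover edge, $f_1$ and $f_2$ are \emph{not} adjacent in $T^*$, and the $T^*$-path between them has no reason to stay among faces touching the small subtree $T_p$; on a torus it typically winds all the way around. Hence the bags containing $p$ are disconnected in $\mathcal{T}$. In the paper's construction this cannot happen, because $p\in P_a$ forces $p$ into every bag. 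Your topological justification (``cut along the edges recorded by $X$ and the faces below $v$ form a disk'') does not help: slitting the surface along $g$ arcs does not planarise it, and in any case the connectivity you need is in the specific tree $T^*$, a combinatorial statement unaffected by slitting. The DMW argument you defer to proves connectivity for the path-based bags, not for your endpoint-based ones.

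Two smaller points. A chord added inside a face of $G$ need not join vertices in adjacent bfs layers, so your claim that the layering extends to $G^+$ is false; fortunately it is also unnecessary, since the $2g+3$ bound comes from the root-path structure of the bags, not from $G^+$ respecting the layering. And tree--cotree on an Euler-genus-$g$ embedding leaves exactly $g$ edges, not $2g$; their $2g$ endpoints account for the ``$2g$'' in $2g+3$, but only after you take the full paths to them.
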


\begin{proof}[Proof Sketch]
Let $r$ be the vertex for which $V_i=\{v\in V(G):\dist(v,r)=i\}$. 
Let $T$ be a bfs tree of $G$ rooted at $r$. 
For each vertex $v$ of $G$, let $P_v$ be the vertex set of the $vr$-path in $T$. 
Thus if $v\in V_i$, then  $P_v$ contains exactly one vertex in  $V_j$ for $j\in\{0,\dots,i\}$.

Let $G'$ be a triangulation of $G$ with $V(G')=V(G)$ (allowing parallel edges on distinct faces). 
Let $F$ be the set of faces of $G'$. Say $G$ has  $n$ vertices.  
By Euler's formula, $|F|=2n+2g-4$ and $|E(G')|=3n+3g-6$. 

Let $D$ be the subgraph of the dual of $G'$ with vertex set $F$, where two vertices are adjacent if the corresponding faces share an edge not in $T$. 
Thus $|V(D)|=|F|=2n+2g-4$ and $|E(D)|=|E(G')|-|E(T)|=(3n+3g-6)-(n-1)=2n+3g-5$. 
\citet{DMW13} proved that $D$ is connected. 

Let $T^*$ be a spanning tree of $D$. Thus $|E(T^*)|=|V(D)|-1=2n+2g-5$. Let $X:=E(D)-E(T^*)$. 
Thus $|X|=(2n+3g-5)-(2n+2g-5)=g$. For each face $f=xyz$ of $G'$, let 
$C_f:= \cup \{P_a\cup P_b:ab\in X\} \cup P_x\cup P_y\cup P_z$. 
Since $|X|=g$ and each $P_v$ contains at most one vertex in each layer,  $C_f$ contains at most $2g+3$ vertices in each layer.
\citet{DMW13} proved that  $(C_f:f\in F)$ is a $T^*$-decomposition of $G$. 
\end{proof}

We now present the main results of this section. 

\begin{theorem} 
\label{MapGraphLayeredTreewidth}
Every $(g,d)$-map graph has layered treewidth at most $(2g+3)(2d+1)$.
\end{theorem}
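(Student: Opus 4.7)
The plan is to combine the three preceding lemmas in the obvious way. Given a $(g,d)$-map graph $G$, \cref{MapGraphCharacterisation} provides a bipartite graph $H$ of Euler genus at most $g$ with bipartition $\{A,B\}$ such that $G\cong H^2[A]$ and every vertex in $B$ has degree at most $d$. So the task reduces to producing a layered tree-decomposition of $H$ with the structure required by \cref{MapGraph}, since feeding that into \cref{MapGraph} with $k=2g+3$ immediately yields the target bound $(2g+3)(2d+1)$.

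To produce such a decomposition, first assume $H$ is connected (otherwise process each connected component independently, aligning their layerings and taking the disjoint union of tree decompositions; this does not increase the layered width). Pick any vertex $r\in A$ and take the bfs layering $V_0,V_1,V_2,\dots$ of $H$ rooted at $r$. Because $H$ is bipartite, this layering alternates between the two parts of the bipartition: $V_{2i}\subseteq A$ and $V_{2i+1}\subseteq B$. Setting $A_i:=V_{2i-2}$ and $B_i:=V_{2i-1}$ (padding with an empty set at the end if necessary) gives a layering of $H$ of exactly the form $A_1,B_1,A_2,B_2,\dots,A_t,B_t$ required by \cref{MapGraph}.

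Now apply \cref{StrongDMW} to obtain a tree decomposition of $H$ with layered width $2g+3$ with respect to the bfs layering $V_0,V_1,\dots$. Since each $A_i\cup B_i$ is the union of two consecutive bfs layers and the prescribed layering merely regroups the bfs layers in pairs, a bag that contains at most $2g+3$ vertices from each $V_j$ certainly contains at most $2g+3$ vertices in each $A_i$ and at most $2g+3$ vertices in each $B_i$. (If one prefers a layered width of $2g+3$ matching the $A,B$-alternating layering exactly, that is already what the bfs layering delivers.) Thus $H$ has layered treewidth at most $2g+3$ with respect to the layering $A_1,B_1,\dots,A_t,B_t$.

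Finally, invoke \cref{MapGraph} with $k=2g+3$: the half-square $H^2[A]\cong G$ has layered treewidth at most $(2g+3)(2d+1)$, as required. The only mild obstacle is verifying that the bfs layering actually matches the alternating format demanded by \cref{MapGraph}; this is handled by rooting bfs at a vertex of $A$ and using bipartiteness.
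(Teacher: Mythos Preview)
Your proof is correct and follows essentially the same approach as the paper: reduce to connected $H$, use \cref{MapGraphCharacterisation}, root a bfs layering at a vertex of $A$ so that bipartiteness forces the alternating $A_1,B_1,A_2,B_2,\dots$ format, apply \cref{StrongDMW} to get layered width $2g+3$, and then feed everything into \cref{MapGraph}. The only cosmetic difference is that the paper reduces to connected $G$ first (whence $H$ is connected), whereas you reduce to connected $H$ directly; both are fine.
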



\begin{proof}
Let $G$ be a $(g,d)$-map graph. Since the layered treewidth of $G$ equals the maximum layered treewidth of the components of $G$, we may assume that $G$ is connected. By \cref{MapGraphCharacterisation}, $G$ is isomorphic to $H^2[A]$ for some bipartite graph $H$ with bipartition $\{A,B\}$ and Euler genus $g$, where vertices in $B$ have degree at most $d$ in $H$. Since $G$ is connected, $H$ is connected. Fix a vertex $r\in A$. For $i\geq 1$, let $A_i$ be the set of vertices of $H$ at distance $2i-2$ from $r$, and let $B_i$ be the set of vertices of $H$ at distance $2i-1$ from $r$. Since $H$ is bipartite and connected, $A=A_1\cup\dots,A_t$ and $B=B_1\cup\dots\cup B_t$ for some $t$, and $A_1,B_1,\dots,A_t,B_t$ is a bfs layering of $H$. By \cref{StrongDMW}, $H$ has a tree decomposition of layered width $2g+3$ with respect to $A_1,B_1,\dots,A_t,B_t$. By \cref{MapGraph}, $H^2[A]$ and thus $G$ has layered treewidth at most $(2g+3)(2d+1)$.
\end{proof}

\cref{Norine} and \cref{MapGraphLayeredTreewidth} imply:

\begin{theorem} 
\label{MapGraphTreewidth}
Every  $n$-vertex $(g,d)$-map graph has treewidth at most $$2\sqrt{(2g+3)(2d+1)n}-1.$$
\end{theorem}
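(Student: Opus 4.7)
The plan is to derive Theorem \ref{MapGraphTreewidth} as an immediate combination of two already-established results: the layered-treewidth bound for map graphs (\cref{MapGraphLayeredTreewidth}) and Norin's general inequality between layered treewidth and treewidth (\cref{Norine}). Since the structural work has already been done, this step is essentially arithmetic plumbing.

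Concretely, first I would let $G$ be an arbitrary $n$-vertex $(g,d)$-map graph. Applying \cref{MapGraphLayeredTreewidth} immediately yields that $G$ has layered treewidth at most $k:=(2g+3)(2d+1)$. Next, I would invoke \cref{Norine} with this value of $k$, which states that any $n$-vertex graph of layered treewidth $k$ has treewidth at most $2\sqrt{kn}-1$. Substituting gives the claimed bound
\[
\tw(G) \;\leq\; 2\sqrt{(2g+3)(2d+1)\,n}\;-\;1.
\]

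There is no real obstacle here: the two cited lemmas have been proved earlier in this section (\cref{MapGraphLayeredTreewidth}) and in \cref{Background} (\cref{Norine}), and the theorem is just the composition of the two inequalities. The only thing worth double-checking is that Norin's lemma is stated with the precise constant $2\sqrt{kn}-1$ (it is), so that no extra $+1$ or factor needs to be absorbed. A brief one-sentence proof, simply quoting the two lemmas and multiplying through, suffices.
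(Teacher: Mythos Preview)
Your proposal is correct and matches the paper's own argument exactly: the paper simply states that \cref{Norine} and \cref{MapGraphLayeredTreewidth} imply the theorem, which is precisely the two-lemma composition you describe. Nothing further is needed.
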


Note that \citet{Chen01} proved that $d$-map graphs have separators of size $O(\sqrt{dn})$, which is implied by 
\cref{MapGraphTreewidth} and \cref{RS}.

%

We now show that \cref{MapGraphTreewidth} and thus \cref{MapGraphLayeredTreewidth} are tight. For integers $p,q,r \geq 1$, let $Y_{p,q,r}$ be the plane graph obtained from the $(p+1)\times(q+1)$ grid graph by subdividing each edge $r-1$ times, and then adding a vertex adjacent to the $4r$ vertices of each internal face. As illustrated in \cref{YpqrZpqr},  $Y_{p,q,r}$ is an internal triangulation with maximum degree $d:=4r$. Label each internal face of $Y_{p,q,r}$ as a nation, label the external face as a lake, and let $Z_{p,q,r}$ be the associated $d$-map graph. 

\begin{figure}[h]
\centering
\includegraphics[scale=0.5]{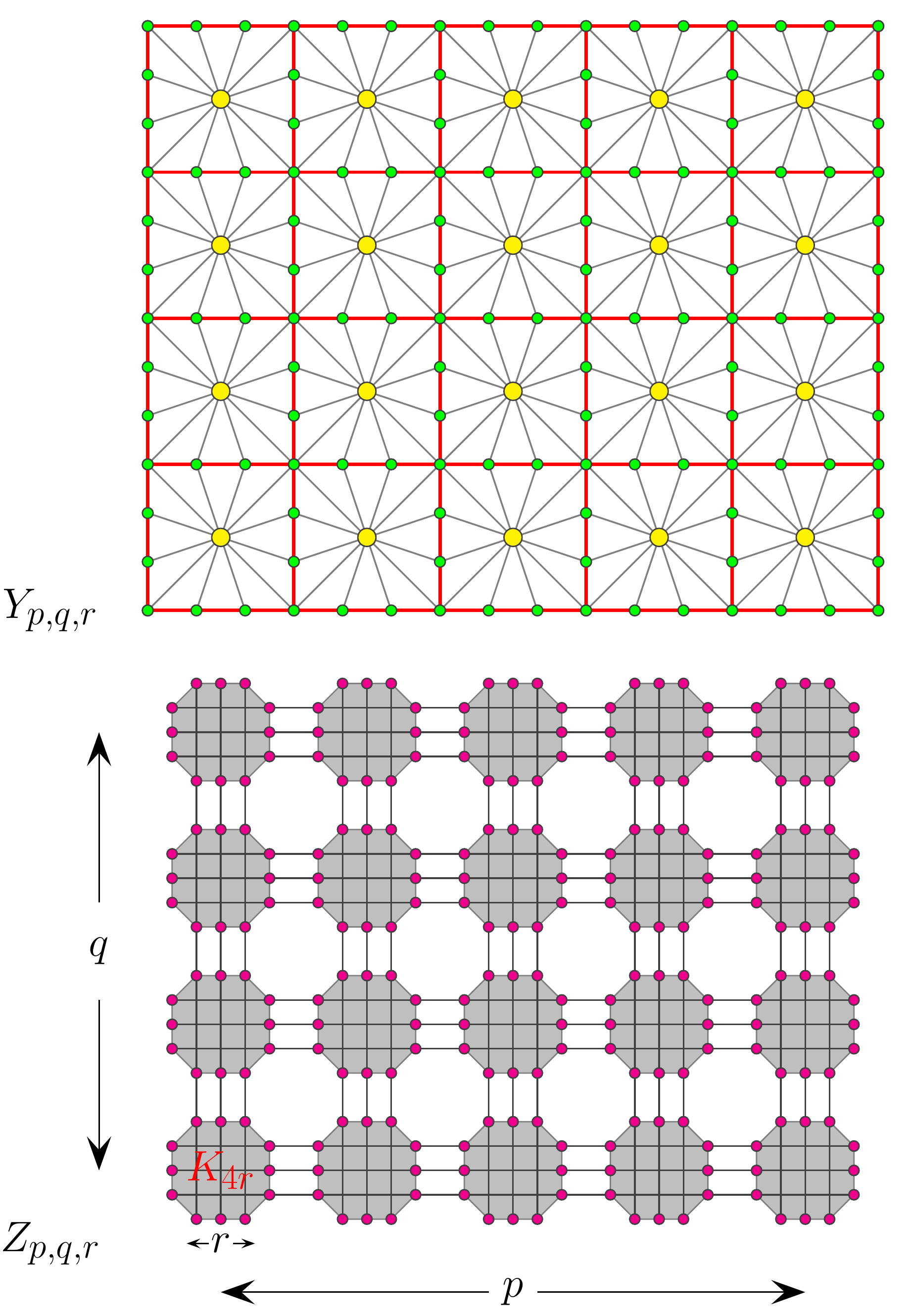}
\caption{$Z_{p,q,r}$ is the map graph of $Y_{p,q,r}$. The bottom figure shows the rows and columns of $Z_{p,q,r}$ (and omits other edges). }
\label{YpqrZpqr}
\end{figure}

\begin{lemma}
\label{Zpqr}
For $\epsilon\in(0,1)$ and integers $p\geq q\geq 1$ and $r\geq 1$, 
every $\epsilon$-separator of $Z_{p,q,r}$ has size at least $\frac{2(1-\epsilon)pqr}{p+q}\geq (1-\epsilon)qr$. 
\end{lemma}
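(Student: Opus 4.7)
The plan is to identify many vertex-disjoint ``horizontal'' and ``vertical'' paths in $Z_{p,q,r}$ and argue, as in the proofs of \cref{TwoDimGrid,ThreeDimGrid}, that any small $\epsilon$-separator either cuts most of them or leaves behind a connected subgraph too large to fit inside one component of $Z_{p,q,r}-S$.

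Concretely, I would label the $4r$ triangles of each cell $(i,j)$ of $Y_{p,q,r}$ around its central nation vertex as $W_k,N_k,E_k,S_k$ for $k\in\{1,\ldots,r\}$, according to which of the four sides they lie against and their position on that side. For each row $j\in\{1,\ldots,q\}$ and level $k\in\{1,\ldots,r\}$ define
\[P_{j,k}:=W_k^{(1,j)},E_k^{(1,j)},W_k^{(2,j)},E_k^{(2,j)},\ldots,W_k^{(p,j)},E_k^{(p,j)},\]
and define vertical paths $Q_{i,k}$ symmetrically through $N_k$ and $S_k$ triangles across the cells of column~$i$. Consecutive vertices are adjacent in $Z_{p,q,r}$ because any two triangles of the same cell share the central vertex, and $E_k^{(i,j)}$ meets $W_k^{(i+1,j)}$ at the $k$th subdivision vertex of the shared grid edge (similarly for $S_k^{(i,j)}$ versus $N_k^{(i,j+1)}$). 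This yields $qr$ disjoint horizontal paths of length $2p$ and $pr$ disjoint vertical paths of length $2q$, which together partition $V(Z_{p,q,r})$, confirming $|V(Z_{p,q,r})|=4pqr$. Moreover every horizontal $P_{j,k}$ and vertical $Q_{i,k'}$ both pass through cell $(i,j)$ and their triangles there share the central vertex, so the union of any non-empty family of horizontals and verticals with at least one of each kind is connected.

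Now let $S$ be an $\epsilon$-separator with $|S|=s$, and split $s=s_H+s_V$ according to whether $S$-vertices lie on horizontals or verticals; disjointness of the two families gives at least $qr-s_H$ untouched horizontals and at least $pr-s_V$ untouched verticals. When both of these quantities are positive, the union of all untouched paths is a connected subgraph of $Z_{p,q,r}-S$ with at least $2p(qr-s_H)+2q(pr-s_V)$ vertices, hence at most $4\epsilon pqr$ vertices. Rearranging gives $ps_H+qs_V\geq 2(1-\epsilon)pqr$, and since $p\geq q$,
\[s=s_H+s_V\geq\frac{ps_H+qs_V}{p}\geq 2(1-\epsilon)qr\geq \tfrac{2(1-\epsilon)pqr}{p+q}.\]

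The main obstacle is the careful triangle/adjacency bookkeeping in the construction above, together with the boundary cases where $S$ meets every horizontal (so $s_H\geq qr$) or every vertical (so $s_V\geq pr$) and so the generic argument has no surviving path of one kind. In the first subcase I would observe that the surviving verticals are themselves mutually connected in $Z_{p,q,r}$ through within-column centres and through shared grid corners between adjacent columns, so the same component-size bound forces $s_V\geq(1-2\epsilon)pr$; a short algebraic check reducing to $q^2+(1-2\epsilon)p^2\geq 0$ then yields the claimed inequality for $\epsilon\leq\tfrac12$, while for $\epsilon>\tfrac12$ the bare estimate $s\geq qr$ already dominates $\tfrac{2(1-\epsilon)pqr}{p+q}$ because $p\geq q$. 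The second subcase is symmetric.
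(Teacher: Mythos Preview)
Your approach is essentially the paper's: partition $V(Z_{p,q,r})$ into $qr$ horizontal paths and $pr$ vertical paths with every horizontal meeting every vertical, and bound the size of the connected subgraph formed by the survivors. The one substantive difference is bookkeeping: the paper does \emph{not} split $|S|=s_H+s_V$ but simply uses the cruder estimates ``at least $qr-|S|$ rows and at least $pr-|S|$ columns avoid $S$''. This yields a connected subgraph of size at least $2q(pr-|S|)+2p(qr-|S|)=4pqr-2|S|(p+q)\leq 4\epsilon pqr$, which rearranges directly to $|S|\geq\frac{2(1-\epsilon)pqr}{p+q}$.

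Your sharper split gives the stronger intermediate bound $s\geq 2(1-\epsilon)qr$ in the generic case, but it forces you into the separate boundary analysis when one family is entirely hit. There your claim that the surviving verticals are ``mutually connected \ldots\ through shared grid corners between adjacent columns'' is delicate: only specific verticals (those using the corner triangles) are adjacent across columns, so if $S$ kills exactly those, two surviving verticals in non-adjacent columns need not be joined. The paper's cruder double-counting of $|S|$ sidesteps this entirely, since the required inequality then follows from pure algebra once any row and any column survive.
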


\begin{proof}
The vertices of $Z_{p,q,r}$ can be partitioned into $pr$ `columns' inducing paths of length $2q$ and $qr$ `rows' inducing paths of length $2p$, such that each row and column are joined by an edge. Let $S$ be an $\epsilon$-separator of $Z_{p,q,r}$. Thus $S$ avoids at least $pr-|S|$ columns and at least $qr-|S|$ rows. Since each row and column are adjacent, the union of these rows and columns that avoid $S$ induces a connected subgraph with at least $2q(pr-|S|)+2p(qr-|S|) = 4pqr -2|S|(p+q)$ vertices. Thus $4pqr -2|S|(p+q) \leq \epsilon|V(Z_{p,q,r})|=4\epsilon pqr$. 
Hence
$|S| \geq \frac{2(1-\epsilon)pqr}{p+q}$, which is at least $(1-\epsilon)qr$ since $p\geq q$. 
\end{proof}

\begin{theorem} 
\label{MapGraphTreewidthLowerBound}
For all $g\geq 0$ and $d\geq 8$, for infinitely many integers $n$, there is an  $n$-vertex $(g,d)$-map graph with treewidth 
$\Omega(\sqrt{(g+1)dn})$ and layered treewidth $\Omega((g+1)d)$.  
\end{theorem}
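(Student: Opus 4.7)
The plan is to mirror the proof of \cref{gkPlanarTreewidthLowerBound}, with the map graphs $Z_{p,q,r}$ introduced above playing the role that the 3D grid did there. I would set $r:=\floor{d/4}\geq 2$, so that the internal triangulation $Y_{q,q,r}$ has maximum degree at most $d$ and hence every map graph built from such a $Y$ is a $d$-map graph.

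When $g$ is bounded by a small absolute constant, I would simply take $G:=Z_{q,q,r}$, which is a $(0,d)$-map graph (hence also a $(g,d)$-map graph) on $n=4q^2r$ vertices. \cref{Zpqr} with $\epsilon=\half$ bounds every $\half$-separator from below by $qr/2$, and then \cref{RS} yields $\tw(G)\geq qr/2-1 = \Omega(\sqrt{rn}) = \Omega(\sqrt{(g+1)dn})$.

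For larger $g$, I would apply \cref{Expander} to obtain a $4$-regular expander $H$ on $m=\Theta(g)$ vertices, embeddable in an orientable surface of Euler genus $O(m)\leq g$. The graph $G$ would then be constructed by taking one copy $Z_v$ of $Z_{q,q,r}$ per vertex $v\in V(H)$, and for each edge $vw\in E(H)$ identifying a boundary strip of the embedded triangulation $Y_v$ with one of $Y_w$ along a handle, so that the local structure at each $H$-edge looks like a copy of $Z_{2q,q,r}$ on $V(Z_v)\cup V(Z_w)$. This yields an embedded graph of Euler genus at most $g$ and maximum degree at most $d$, whose map graph $G$ is a $(g,d)$-map graph on $n=\Theta(q^2rm)$ vertices. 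The separator analysis would then follow the template of \cref{gkPlanarTreewidthLowerBound}: given a $\half$-separator $S$ of $G$, classify each $v\in V(H)$ according to whether $|S\cap V(Z_v)|$ is small (so a dominant component of $Z_v-S$ assigns $v$ to some component of $G-S$) or large (so $v$ is placed in an auxiliary set $S'\subseteq V(H)$). Applying \cref{Zpqr} first to a single $Z_v$ and then to a joined pair $Z_v\cup Z_w$ shows that $S'$ is an $\epsilon$-separator of $H$ for some $\epsilon<1$, whence $|S'|=\Omega(m)$ by expansion. Since each element of $S'$ contributes $\Omega(qr)$ vertices to $S$, this gives $|S|=\Omega(mqr)=\Omega(\sqrt{(g+1)dn})$. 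Then \cref{RS} delivers the treewidth lower bound, and \cref{Norine} converts it into the layered treewidth lower bound $\Omega((g+1)d)$.

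The hard part will be the precise realization of the blow-up: checking that copies of $Y_{q,q,r}$ can be joined through handles so that (i) the combined embedded graph retains maximum degree at most $d$ at the identified boundary, and (ii) the joined block $G[V(Z_v)\cup V(Z_w)]$ satisfies the same separator lower bound as $Z_{2q,q,r}$, so that \cref{Zpqr} applies to it. The natural rows and columns of $Y_{q,q,r}$ extending across a joined boundary cycle should make this work, but the bookkeeping of exactly which boundary vertices and faces to identify across each handle is where the care is needed.
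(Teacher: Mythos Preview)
Your proposal is correct and follows essentially the same strategy as the paper's proof. The paper handles the ``hard part'' you flag exactly as you anticipate: it builds $G_0$ at the grid level by taking, for each $v\in V(H)$, a $(q{+}1)\times(q{+}1)$ grid with the four corner vertices deleted (so each side is a $(q{-}1)$-path), identifying sides along the edges of the embedded $4$-regular expander $H$, and only then subdividing and adding the degree-$4r$ face centers; this guarantees every vertex of $G_0$ is incident to at most $d$ nations, and the corresponding map-graph blocks $Z^v$ and $Z^{vw}$ are copies of $Z_{q,q,r}$ and $Z_{2q,q,r}$ with a bounded number ($16r$, resp.\ $32r$) of clique vertices removed, which is absorbed into the constants when invoking \cref{Zpqr}.
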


\begin{proof}
Let $r:= \floor{\frac{d}{4}}$. Thus $r\geq 2$.

First suppose that $g\leq 19$. Infinitely many values of $n$ satisfy $n=4q^2r$ for some integer $q\geq 1$. Let $G$ be $Z_{q,q,r}$. Then $G$ has $n$ vertices. As observed above, $G$ is a $(0,4r)$-map graph and thus a $(g,d)$-map graph. \cref{Zpqr} implies that every $\frac12$-separator of $G$ has size at least $\frac12 qr$. \cref{RS} thus implies that $G$ has treewidth at least $\frac12 qr-1$, which is  $\Omega(\sqrt{(g+1)dn})$, as desired. 

Now assume that $g\geq 20$. By \cref{Expander} there is a 4-regular expander $H$ on $m:=\floor{\frac{g}{4}}\geq 5$ vertices. Thus $H$ has $2m$ edges, $H$ embeds in the orientable surface with $2m$ handles, and thus has Euler genus at most $4m\leq g$. For infinitely many values of $n$, we have that 
$n=(4q^2r -16r)m$ for some integer $q\geq 100$. 

Let $G_0$ be obtained from $H$ as follows. For each vertex $v$ of $H$ introduce a copy of the $(q+1)\times(q+1)$ grid graph with the four corner vertices deleted, denoted by $Y_v$. For each edge $vw$ of $H$, identify one side of $Y_v$ with $Y_w$ (where a side consists of a $(q-1)$-vertex path). The sides are identified according to the embedding of $H$, so that $G_0$ is embedded in the same surface as $H$. Note that each edge of $H$ is associated with a copy of the $(2q+1)\times(q+1)$ grid graph with six vertices deleted in $G_0$. Each face of $G_0$ corresponds to a face of $H$ or is a 4-face inside one of the grid graphs. Now, subdivide each edge $r-1$ times. For each face inside one of the grid graphs, which is now a face $f$ of size $4r$, add a vertex of degree $4r$ adjacent to each vertex on the boundary of $f$. So $G_0$ embeds in the same surface as $H$.  Label the resulting triangular faces of $G_0$ as nations. Label the faces of $G_0$ that correspond to the original faces of $H$ as lakes. Every vertex of $G_0$ is incident to at most $\max\{d,8\}=d$ nations. 

Let $G$ be the $(g,d)$-map graph of $G_0$, as illustrated in \cref{YpqrZpqrEdge}. Each vertex of $H$ is associated in $G$ with a copy of $Z_{q,q,r}$ with the four corner cliques of size $4r$ deleted. Denote this subgraph by $Z^v$, which contains $4q^2r-16r$ vertices in $G$. Each edge $vw$ of $H$ is associated in $G$ with a copy of $Z_{2q,q,r}$ with eight cliques of size $4r$ deleted. Denote this subgraph by $Z^{vw}$, which contains $8q^2r-32r$ vertices in $G$. In total, $G$ has $(4q^2r -16r)m=n$ vertices. 

Let $S$ be a $\frac12$-separator in $G$. Let $A_1,\dots,A_c$ be the components of $G-S$. Thus $|A_i|\leq\frac{1}{2}n$ for $i\in[c]$. Initialise sets $S':=A'_1:=\dots:=A'_c:=\emptyset$. 

\begin{figure}[H]
\centering
\includegraphics[scale=0.5]{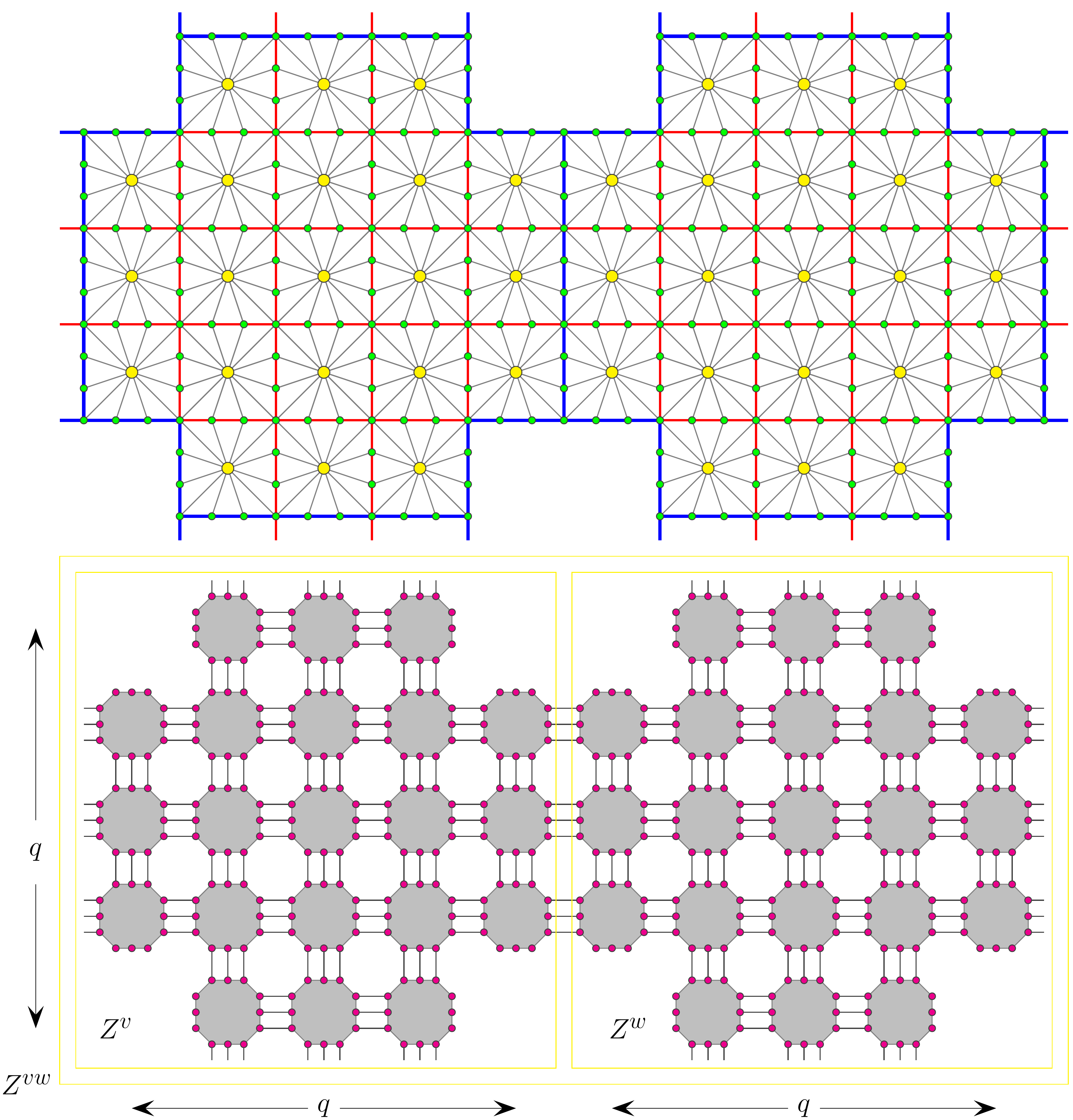}
\caption{A subgraph $Z^{vw}$ of $G$ in the proof of \cref{MapGraphTreewidthLowerBound}.}
\label{YpqrZpqrEdge}
\end{figure}

Consider each vertex $v$ of $H$. If $|S\cap Z^v|\geq\frac{qr}{6}-16r$ then put $v\in S'$. Otherwise, $|S\cap Z^v| < \frac{qr}{6}-16r$. Suppose that $S\cap Z^v$ is a $\frac{5}{6}$-separator of $Z^v$. Then $S\cap Z^v$ plus the $16r$ deleted vertices form a $\frac{5}{6}$-separator in $Z_{q,q,r}$, which has size at least $\frac{qr}{6}$ by \cref{Zpqr}. Thus $|S\cap Z^v|\geq\frac{qr}{6}-16r$, which is a contradiction. Hence $S\cap Z_v$ is not a $\frac{5}{6}$-separator of $Z^v$. Hence some component of $Z^v-S$ has at least $\frac{5}{6} |Z^v|$ vertices. 
Since $\frac{5}{6}>\frac{1}{2}$, exactly one component of $Z^v-S$ has at least $\frac{5}{6} |Z^v|$ vertices. 
This component is a subgraph of $A_i$ for some $i\in[c]$; add $v$ to $A'_i$. Thus $S',A'_1.\dots,A'_c$ is a partition of $V(H)$. 

We now prove that $S'$ is a $\frac{3}{5}$-separator in $H$. Suppose that $v\in A'_i$ and $w\in A'_j$ for some edge $vw$ of $H$. 
Since $v\not\in S'$ and $w\not\in S'$, we have $|S\cap Z^v|<\frac{qr}{6}-16r$ and $|S\cap Z^w|<\frac{qr}{6}-16r$. Thus $|S\cap Z^{vw}| <\frac{qr}{3}-32r$. 

Suppose that $S\cap Z^{vw}$ is a $\frac{3}{4}$-separator of $Z^{vw}$. Then $S\cap Z^{vw}$ plus the $32r$ deleted vertices form a $\frac{3}{4}$-separator in $Z_{2q,q,r}$, which has size at least $\frac{2(1-3/4)(2q)qr}{2q+q}=\frac{qr}{3}$ by \cref{Zpqr} (with $p=2q$). Thus $|S\cap Z^{vw}|\geq\frac{qr}{3}-32r$, which is a contradiction. 
Hence $S\cap Z^{vw}$ is not a $\frac{3}{4}$-separator of $Z^{vw}$. 
Therefore some component $X$ of $Z^{vw}-S$ contains at least $\frac{3}{4}|Z^{vw}|=\frac{3}{2}|Z^v|=\frac{3}{2}|Z^w|$ vertices. 
Of course, each of $Z^v$ and $Z^w$ can contain at most $|Z^v|=|Z^w|$ vertices in $X$. 
Thus $X$ contains at least half the vertices in both $Z^v$ and $Z^w$. 
Hence, by construction, $v$ and $w$ are in the same $A'_i$. 
That is, there is no edge of $H$ between distinct $A'_i$ and $A'_j$, and each component of $H-S'$ is contained in some $A'_i$. 
For each $i\in[c]$, 
$$\half(4q^2r -16r) m = \half n \geq |A_i| \geq \tfrac{5}{6} (4q^2r-16r) |A'_i|.$$ 
Thus $|A'_i| \leq \frac{3}{5}m$. Therefore $S'$ is a $\frac{3}{5}$-separator in $H$. 

By \cref{Expander}, $|S'|\geq\beta m$ for some constant $\beta>0$. 
Thus $|S|\geq (\frac{qr}{6}-16r) |S'|\geq  \beta mr ( \tfrac{q}{6} - 16)$. 
By \cref{RS}, $G$ has treewidth at least 
$$\beta mr ( \tfrac{q}{6} - 16)-1 \geq \Omega( mrq) =
\Omega( \sqrt{ m \cdot r \cdot q^2rm} ) = 
\Omega( \sqrt{ g d n} ),$$
as desired. 

Finally, by \cref{Norine}, if $G$ has layered treewidth $\ell$ then $\Omega(\sqrt{(g+1)dn}) \leq \tw(G) \leq 2\sqrt{\ell n}$, implying $\ell \geq \Omega((g+1)d)$. 
\end{proof}

For $gd\geq n$ the trivial upper bound of $\tw(G)\leq n$ is better than that given in \cref{MapGraphTreewidth}. We conclude that the maximum treewidth of $(g,d)$-map graphs on $n$ vertices is $\Theta(\min\{n,\sqrt{(g+1)(d+1)n}\})$ for arbitrary $g,d,n$. This completes the proof of \cref{MapGraphTreewidthSummary}.


\section{Pathwidth}

It is well known that hereditary graph classes with treewidth $O(n^\epsilon)$, for some fixed $\epsilon\in(0,1)$, in fact have pathwidth $O(n^\epsilon)$; see \citep{Bodlaender-TCS98} for example. In particular, the following more specific result means that all the $O(\sqrt{n})$ treewidth upper bounds in this paper lead to $O(\sqrt{n})$ pathwidth upper bounds. We include the proof for completeness. 

\begin{lemma}
\label{PathwidthLemma}
Let $G$ be a graph with $n$ vertices such that every induced subgraph $G'$ of $G$ with $n'$ vertices has treewidth at most $c\sqrt{n'}-1$ for some constant $c\geq (1-\sqrt{2/3})^{-1}$. Let $c':=c(1-\sqrt{2/3})^{-1}$. Then
$$\pw(G) \leq c'\sqrt{n}-1 <\frac{11c}{2}\sqrt{n}-1.$$
\end{lemma}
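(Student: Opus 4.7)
My plan is to prove the lemma by strong induction on $n$, converting the hereditary treewidth bound into a recursive separator-based construction of a path decomposition. Assume inductively that every induced subgraph on fewer than $n$ vertices has pathwidth at most $c'\sqrt{n'}-1$. First I would apply the hypothesis together with \cref{RS} to $G$, producing a $\tfrac{1}{2}$-separator $S\subseteq V(G)$ with $|S|\leq c\sqrt{n}$. Since a $\tfrac{1}{2}$-separator is a fortiori a $\tfrac{2}{3}$-separator, every component $C_i$ of $G-S$ satisfies $|C_i|\leq \tfrac{2n}{3}$; this is the step that introduces the factor $\sqrt{2/3}$ appearing in $c'$.

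Next I would apply the inductive hypothesis to each induced subgraph $G[C_i]$, obtaining a path decomposition $P_i$ of width at most $c'\sqrt{|C_i|}-1\leq c'\sqrt{2n/3}-1$. I would then concatenate $P_1,P_2,\dots$ in any order into a single linear sequence of bags, and add every vertex of $S$ to every bag. The result is a valid path decomposition of $G$: edges inside each $C_i$ are covered by $P_i$, edges incident to $S$ are covered because every bag contains $S$, and for each vertex the set of bags containing it remains a contiguous subpath. Its width is at most $c'\sqrt{2n/3}+c\sqrt{n}-1$, and to close the induction I would verify that this is at most $c'\sqrt{n}-1$; this rearranges to $c'\geq c/(1-\sqrt{2/3})$, which is exactly the definition of $c'$. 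The numerical bound $c'<\tfrac{11c}{2}$ then follows from $\sqrt{2/3}<9/11$, equivalently $2\cdot 121=242<243=3\cdot 81$.

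The main obstacle, and essentially the only point requiring care, is the base case. The inductive step implicitly assumes that the separator $S$ produces at least one non-trivial component, but when $n$ is small it is possible that $c\sqrt{n}\geq n$, so that $S$ could swallow $V(G)$. In that regime I would fall back on the trivial single-bag path decomposition of width $n-1$, which satisfies the claimed bound whenever $\sqrt{n}\leq c'$; in particular this covers $n=1$, since $c'\geq (1-\sqrt{2/3})^{-2}>1$. The two regimes, namely the trivial decomposition for $n\leq (c')^2$ and the inductive construction for larger $n$, together cover every positive integer, so the induction goes through.
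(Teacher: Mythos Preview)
Your proposal is correct and follows essentially the same separator-plus-induction approach as the paper. The one difference is that the paper, after removing the $\tfrac12$-separator $S$, greedily merges the components of $G-S$ into two groups $A$ and $B$ each of size at most $\tfrac{2}{3}n'$ and recurses on those two groups, whereas you recurse directly on each component (which already has at most $\tfrac{n}{2}\leq\tfrac{2n}{3}$ vertices). Your version is in fact simpler and would even yield the sharper constant $c(1-\sqrt{1/2})^{-1}$ had you not deliberately relaxed $\tfrac{n}{2}$ to $\tfrac{2n}{3}$ to match the stated $c'$; the paper's grouping step is what genuinely forces the $\tfrac{2}{3}$. One tiny technicality: when $S=V(G)$ (so there are no components) your concatenation is empty; the paper handles this by explicitly inserting a middle bag $S$, and you should do the same.
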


\begin{proof}
We proceed by induction on $n'\geq 1$ with the hypothesis that every non-empty subgraph $G'$ of $G$ with $n'$ vertices has pathwidth at most $c'\sqrt{n'}-1$. If $n'=1$ then $G'$ has pathwidth $0$ and the claim holds since $c\geq (1-\sqrt{2/3})^{-1}$. Consider a subgraph $G'$ of $G$ with $n'$ vertices. By assumption, $G'$ has treewidth at most $c\sqrt{n'}-1$. By \cref{RS}, $G'$ has a $\frac12$-separator $S$ of size at most $c\sqrt{n'}$. Thus each component of $G'-S$ contains at most $\frac{n'}{2}$ vertices. Group the components of $G'-S$ as follows, starting with each component in its own group. So initially each group has at most $\frac{n'}{2}\leq \frac{2}{3}n'$ vertices. While there are at least three groups, merge the two smallest groups, which have at most $\frac{2}{3}n'$ vertices in total. Upon termination, there are at most two groups, each with at most $\frac{2}{3}n'$ vertices. Let $A$ and $B$ be the subgraphs of $G'$ induced by the two groups. By induction, $A$ and $B$ each have pathwidth at most $c'\sqrt{\frac{2}{3}n'}-1$. Let $A_1,\dots,A_a$ and $B_1,\dots,B_b$ be the corresponding path decompositions of $A$ and $B$ respectively. Then $A_1\cup S,\dots,A_a\cup S,S,B_1\cup S,\dots,B_b\cup S$ is a path decomposition of $G'$ with width 
$$c'\sqrt{\frac{2}{3}n'}-1+|S| \leq c'\sqrt{\frac{2}{3}n'}-1+c\sqrt{n'} = (c'\sqrt{\frac{2}{3}}+c)\sqrt{n'}-1 = c'\sqrt{n'}-1,$$ 
as desired. Hence $G$ has pathwidth at most $c'\sqrt{n}$. 
\end{proof}

\cref{Norine} and \cref{PathwidthLemma} imply:

\begin{theorem}
\label{LayeredTreewidthPathwidth}
Every $n$-vertex graph with layered treewidth $k$ has pathwidth at most $11\sqrt{kn}-1$.
\end{theorem}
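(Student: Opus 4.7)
The plan is to combine Lemma \ref{Norine} with essentially the induction argument of Lemma \ref{PathwidthLemma}, carrying the constants explicitly so as to reach the precise bound $11\sqrt{kn}-1$. First I would note that layered treewidth is hereditary under induced subgraphs: given a tree decomposition of $G$ and a layering of $G$ witnessing layered treewidth at most $k$, restricting every bag to $V(G')$ still certifies layered treewidth at most $k$ for the induced subgraph $G'$. Hence Lemma \ref{Norine} applies to every induced subgraph of $G$ with the same parameter $k$.

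Next I would prove by induction on $n'$ the stronger statement that every induced subgraph $G'$ of $G$ on $n'$ vertices satisfies $\pw(G')\leq 11\sqrt{kn'}-1$. The base case $n'=1$ is immediate, since the right-hand side is at least $10$ whenever $k\geq 1$. For the inductive step, Lemma \ref{Norine} gives $\tw(G')\leq 2\sqrt{kn'}-1$, so by Lemma \ref{RS} there is a $\tfrac{1}{2}$-separator $S$ of $G'$ with $|S|\leq 2\sqrt{kn'}$. Following the greedy merging from the proof of Lemma \ref{PathwidthLemma}, I would group the components of $G'-S$ into two parts inducing subgraphs $A$ and $B$, each with at most $\tfrac{2}{3}n'$ vertices. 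The induction hypothesis yields path decompositions of $A$ and $B$ of width at most $11\sqrt{2/3}\sqrt{kn'}-1$; concatenating these decompositions with $S$ added to every bag (and a copy of $S$ placed as a bag between them) produces a path decomposition of $G'$ of width at most $(11\sqrt{2/3}+2)\sqrt{kn'}-1$.

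The hard part is the numerical verification that this matches the target, which reduces to the single inequality $11\sqrt{2/3}+2\leq 11$, equivalently $11\sqrt{6}\leq 27$, or after squaring $726\leq 729$. This is precisely the tight inequality that forces the constant $11$ in the statement, and it holds by the narrowest of margins. With this check in place, the induction closes and gives $\pw(G)\leq 11\sqrt{kn}-1$, as claimed. One could alternatively quote Lemma \ref{PathwidthLemma} directly with $c:=2\sqrt{k}$ whenever $k$ is large enough to satisfy its side condition $c\geq(1-\sqrt{2/3})^{-1}$; redoing the induction here simply avoids this restriction and handles all $k\geq 1$ uniformly.
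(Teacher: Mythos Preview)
Your proposal is correct and follows exactly the route the paper takes: the paper simply says the theorem is implied by \cref{Norine} and \cref{PathwidthLemma}, and you have unpacked this combination, including the hereditary nature of layered treewidth needed to feed \cref{Norine} into the hypothesis of \cref{PathwidthLemma}. Your decision to rerun the induction directly (rather than quote \cref{PathwidthLemma} as a black box) is a nice touch, since it sidesteps the side condition $c\geq(1-\sqrt{2/3})^{-1}$, which with $c=2\sqrt{k}$ would otherwise only cover $k\geq 8$.
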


\paragraph{Acknowledgement.} 
This research was initiated at the 3rd Workshop on Graphs and Geometry held at the Bellairs Research Institute in 2015.  Thanks to Thomas Bl\"asius for pointing out a minor error in a preliminary version of this paper. 



\begin{thebibliography}{29}
\expandafter\ifx\csname natexlab\endcsname\relax\def\natexlab#1{#1}\fi

\bibitem[{Bodlaender(1998)}]{Bodlaender-TCS98}
\textsc{H.~L. Bodlaender}, A partial $k$-arboretum of graphs with bounded
  treewidth. \emph{Theoret. Comput. Sci.}, \textbf{209(1-2)}:1--45, 1998.

\bibitem[{Chen(2001)}]{Chen01}
\textsc{Z.-Z. Chen}, Approximation algorithms for independent sets in map
  graphs. \emph{J. Algorithms}, \textbf{41(1)}:20--40, 2001.

\bibitem[{Chen(2007)}]{Chen-JGT07}
\textsc{Z.-Z. Chen}, New bounds on the edge number of a {$k$}-map graph.
  \emph{J. Graph Theory}, \textbf{55(4)}:267--290, 2007.

\bibitem[{Chen \emph{et~al.}(2002)Chen, Grigni, and Papadimitriou}]{CGP02}
\textsc{Z.-Z. Chen, M.~Grigni, and C.~H. Papadimitriou}, Map graphs. \emph{J.
  ACM}, \textbf{49(2)}:127--138, 2002.

\bibitem[{Demaine \emph{et~al.}(2004/05)Demaine, Fomin, Hajiaghayi, and
  Thilikos}]{DH-SJDM04}
\textsc{E.~D. Demaine, F.~V. Fomin, M.~Hajiaghayi, and D.~M. Thilikos},
  Bidimensional parameters and local treewidth. \emph{SIAM J. Discrete Math.},
  \textbf{18(3)}:501--511, 2004/05.

\bibitem[{Demaine \emph{et~al.}(2005)Demaine, Fomin, Hajiaghayi, and
  Thilikos}]{DFHT05}
\textsc{E.~D. Demaine, F.~V. Fomin, M.~Hajiaghayi, and D.~M. Thilikos},
  Fixed-parameter algorithms for {$(k,r)$}-center in planar graphs and map
  graphs. \emph{ACM Trans. Algorithms}, \textbf{1(1)}:33--47, 2005.

\bibitem[{Demaine and Hajiaghayi(2004)}]{DH-SODA04}
\textsc{E.~D. Demaine and M.~Hajiaghayi}, Equivalence of local treewidth and
  linear local treewidth and its algorithmic applications. In \emph{Proc. 15th
  Annual ACM-SIAM Symposium on Discrete Algorithms (SODA '04)}, pp. 840--849,
  SIAM, 2004.

\bibitem[{Dujmovi{\'c}(2015)}]{Duj15}
\textsc{V.~Dujmovi{\'c}}, Graph layouts via layered separators. \emph{J.
  Combin. Theory Series B.}, \textbf{110}:79--89, 2015.

\bibitem[{Dujmovi{\'c} \emph{et~al.}(2005)Dujmovi{\'c}, Morin, and
  Wood}]{DMW05}
\textsc{V.~Dujmovi{\'c}, P.~Morin, and D.~R. Wood}, Layout of graphs with
  bounded tree-width. \emph{SIAM J. Comput.}, \textbf{34(3)}:553--579, 2005.

\bibitem[{Dujmovi{\'c} \emph{et~al.}(2013)Dujmovi{\'c}, Morin, and
  Wood}]{DMW13}
\textsc{V.~Dujmovi{\'c}, P.~Morin, and D.~R. Wood}, {Layered separators in
  minor-closed families with applications}. Electronic preprint
  \arXiv{1306.1595}, 2013.

\bibitem[{Dujmovi{\'c} \emph{et~al.}(2015)Dujmovi{\'c}, Sidiropoulos, and
  Wood}]{DSW16}
\textsc{V.~Dujmovi{\'c}, A.~Sidiropoulos, and D.~R. Wood}, Layouts of expander
  graphs. \emph{Chicago J. of Theoretical Computer Science}, \textbf{2016(1)},
  2015.

\bibitem[{Dvor{\'a}k and Norin(2014)}]{DN14}
\textsc{Z.~Dvor{\'a}k and S.~Norin}, {Treewidth of graphs with balanced
  separations}. Electronic preprint \arXiv{1408.3869}, 2014.

\bibitem[{Eppstein(2000)}]{Epp-Algo-00}
\textsc{D.~Eppstein}, {Diameter and treewidth in minor-closed graph families}.
  \emph{Algorithmica}, \textbf{27}:275{--}291, 2000.

\bibitem[{Fomin \emph{et~al.}(2012)Fomin, Lokshtanov, and Saurabh}]{FLS-SODA12}
\textsc{F.~V. Fomin, D.~Lokshtanov, and S.~Saurabh}, Bidimensionality and
  geometric graphs. In \emph{Proceedings of the {T}wenty-{T}hird {A}nnual
  {ACM}-{SIAM} {S}ymposium on {D}iscrete {A}lgorithms}, pp. 1563--1575, 2012.

\bibitem[{Gilbert \emph{et~al.}(1984)Gilbert, Hutchinson, and
  Tarjan}]{GHT-JAlg84}
\textsc{J.~R. Gilbert, J.~P. Hutchinson, and R.~E. Tarjan}, {A separator
  theorem for graphs of bounded genus}. \emph{J. Algorithms},
  \textbf{5(3)}:391{--}407, 1984.

\bibitem[{Grigoriev and Bodlaender(2007)}]{GB07}
\textsc{A.~Grigoriev and H.~L. Bodlaender}, {Algorithms for graphs embeddable
  with few crossings per edge}. \emph{Algorithmica}, \textbf{49(1)}:1{--}11,
  2007.

\bibitem[{Grohe(2003)}]{Grohe-Comb03}
\textsc{M.~Grohe}, Local tree-width, excluded minors, and approximation
  algorithms. \emph{Combinatorica}, \textbf{23(4)}:613--632, 2003.

\bibitem[{Grohe and Marx(2009)}]{GM-JCTB}
\textsc{M.~Grohe and D.~Marx}, {On tree width, bramble size, and expansion}.
  \emph{J. Combin. Theory Ser. B}, \textbf{99(1)}:218{--}228, 2009.

\bibitem[{Guy \emph{et~al.}(1968)Guy, Jenkyns, and Schaer}]{GJS68}
\textsc{R.~K. Guy, T.~Jenkyns, and J.~Schaer}, {The toroidal crossing number of
  the complete graph}. \emph{J. Combinatorial Theory}, \textbf{4}:376{--}390,
  1968.

\bibitem[{Halin(1976)}]{Halin76}
\textsc{R.~Halin}, {$S$-functions for graphs}. \emph{J. Geometry},
  \textbf{8(1-2)}:171{--}186, 1976.

\bibitem[{Hoory \emph{et~al.}(2006)Hoory, Linial, and Wigderson}]{HLW06}
\textsc{S.~Hoory, N.~Linial, and A.~Wigderson}, {Expander graphs and their
  applications}. \emph{Bull. Amer. Math. Soc. (N.S.)},
  \textbf{43(4)}:439{--}561, 2006.

\bibitem[{Leighton and Rao(1999)}]{LR99}
\textsc{T.~Leighton and S.~Rao}, {Multicommodity max-flow min-cut theorems and
  their use in designing approximation algorithms}. \emph{J. ACM},
  \textbf{46(6)}:787{--}832, 1999.

\bibitem[{Otachi and Suda(2011)}]{OS11}
\textsc{Y.~Otachi and R.~Suda}, Bandwidth and pathwidth of three-dimensional
  grids. \emph{Discrete Math.}, \textbf{311(10-11)}:881--887, 2011.

\bibitem[{Pach and T{\'o}th(1997)}]{PachToth-Comb97}
\textsc{J.~Pach and G.~T{\'o}th}, Graphs drawn with few crossings per edge.
  \emph{Combinatorica}, \textbf{17(3)}:427--439, 1997.

\bibitem[{Reed(1997)}]{Reed97}
\textsc{B.~A. Reed}, {Tree width and tangles: a new connectivity measure and
  some applications}. In \emph{Surveys in combinatorics}, vol. 241 of
  \emph{London Math. Soc. Lecture Note Ser.}, pp. 87{--}162, Cambridge Univ.
  Press, 1997.

\bibitem[{Robertson and Seymour(1986)}]{RS-GraphMinorsII-JAlg86}
\textsc{N.~Robertson and P.~D. Seymour}, {Graph minors. II. Algorithmic aspects
  of tree-width}. \emph{J. Algorithms}, \textbf{7(3)}:309{--}322, 1986.

\bibitem[{Schaefer(2014)}]{Schaefer14}
\textsc{M.~Schaefer}, {The graph crossing number and its variants: A survey}.
  \emph{Electron. J. Combin.}, \textbf{DS21}, 2014.

\bibitem[{Shahrokhi(2013)}]{Shahrokhi13}
\textsc{F.~Shahrokhi}, New representation results for planar graphs. In
  \emph{29th European Workshop on Computational Geometry (EuroCG 2013)}, pp.
  177--180, 2013, \arXiv{1502.06175}.

\bibitem[{Shahrokhi \emph{et~al.}(1996)Shahrokhi, Sz{\'e}kely, S{\'y}kora, and
  Vrt'o}]{SSSV96}
\textsc{F.~Shahrokhi, L.~A. Sz{\'e}kely, O.~S{\'y}kora, and I.~Vrt'o},
  {Drawings of graphs on surfaces with few crossings}. \emph{Algorithmica},
  \textbf{16(1)}:118{--}131, 1996.

\end{thebibliography}

\end{document}